\documentclass{amsart}
\usepackage{amssymb,amsmath,amscd,graphicx,amsthm}
\usepackage[final]{epsfig}
\usepackage{graphics}

\usepackage{url}
\usepackage{color}

\newtheorem{theorem}{Theorem}
\newtheorem{lemma}[theorem]{Lemma}
\newtheorem{proposition}[theorem]{Proposition}
\newtheorem{corollary}[theorem]{Corollary}

\theoremstyle{definition}

\theoremstyle{remark}
\newtheorem{remark}[theorem]{Remark}

\numberwithin{equation}{section}
\numberwithin{theorem}{section}

\newcommand{\eps}{{\varepsilon}}

\newcommand{\C}{{\mathbf C}}
\newcommand{\N}{{\mathbf N}}

\newcommand{\R}{{\mathbf R}}
\newcommand{\Z}{{\mathbf Z}}
\newcommand{\RP}{{\mathbf {RP}}}
\newcommand{\CP}{{\mathbf {CP}}}

\def\one{\mathbf 1}
\def\bop{{\mathbf p}}
\def\boq{{\mathbf q}}
\def\boxx{{\mathbf x}}
\def\boy{{\mathbf y}}
\def\barT{{\overline{T}}}
\def\barD{{\overline{D}}}
\def\barC{{\overline{C}}}
\def\caq{{\mathcal Q}}
\def\caP{{\mathcal P}}
\def\can{{\mathcal N}}
\def\cac{{\mathcal C}}
\def\caf{{\mathcal R}}
\def\frap{{\mathfrak P}}
\def\diag{\operatorname{diag}}
\def\sgn{\operatorname{sgn}}
\def\gcd{\operatorname{gcd}}
\def\Trace{\operatorname{Tr}}
\def\hcan{{\widehat{\can}}}
\def\EE{{\mathcal E}}
\def\FF{{\mathcal F}}

\def\Poi{{\{\cdot,\cdot\}}}
\newcommand{\PSL}{{\mathrm{PSL}}}
\newcommand{\PGL}{{\mathrm{PGL}}}

\begin{document}
\title[Integrable cluster dynamics and pentagram maps]{Integrable cluster dynamics of directed networks and pentagram maps}
\author[M.~Gekhtman]{Michael Gekhtman}
\address{Department of Mathematics, University of Notre Dame, Notre Dame, IN 46556, USA}
\email{mgekhtma@nd.edu}

\author[M.~Shapiro]{Michael Shapiro}
\address{Department of Mathematics, Michigan State University, East Lansing, MI 48823, USA}
\email{mshapiro@math.msu.edu}

\author[S.~Tabachnikov]{Serge Tabachnikov}
\address{Department of Mathematics,
Pennsylvania State University, University Park, PA 16802, USA
and ICERM, Brown University, Providence, RI 02903, USA}
\email{tabachni@math.psu.edu}

\author[A.~Vainshtein]{Alek Vainshtein}
\address{Department of Mathematics and Department of Computer Science, University of Haifa, Haifa, Mount
Carmel 31905, Israel}
\email{alek@cs.haifa.ac.il}

\begin{abstract}
The pentagram map was introduced by R. Schwartz more than 20 years ago. In 2009, V. Ovsienko, R. Schwartz and S. Tabachnikov
 established Liouville complete integrability of this discrete dynamical system. In 2011,  M. Glick interpreted the pentagram map  as 
a sequence of cluster transformations associated with a special quiver. Using compatible Poisson structures in cluster algebras and 
Poisson geometry of directed networks on surfaces, we generalize Glick's construction to include the pentagram map into a  family of 
discrete integrable maps and we give these maps geometric interpretations.
\end{abstract}

\dedicatory{To the memory of Andrei Zelevinsky}

\maketitle

\tableofcontents

\section{Introduction} \label{intro}

The pentagram map was introduced by R. Schwartz more than 20 years ago \cite{Sch1}. The map acts on plane polygons by drawing the ``
short" diagonals that connect second-nearest vertices of a polygon  and forming a new polygon,  whose vertices are their consecutive 
intersection points, see Fig.~\ref{Penta}. The pentagram map commutes with projective transformations, and therefore acts on the 
projective equivalence classes of polygons in the projective plane.

\begin{figure}[hbtp]
\centering
\includegraphics[height=1.3in]{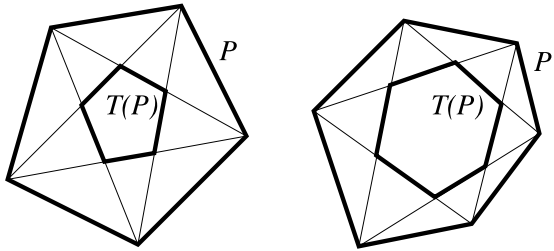}
\caption{Pentagram map}
\label{Penta}
\end{figure}

In fact, the pentagram map acts on a larger class of {\it twisted polygons}. A twisted $n$-gon is an infinite sequence of points $V_i \in \RP^2$  such that $V_{i+n}=M(V_i)$ for all $i \in \Z$ and a fixed projective transformation $M$, called the monodromy. The projective group $\PGL(3,\R)$ naturally acts on twisted polygons. A polygon is closed if the monodromy is the identity. 

Denote by ${\mathcal P}_n$ the moduli  space of projective equivalence classes of twisted $n$-gons, and by ${\mathcal C}_n$ its subspace consisting of closed polygons. Then ${\mathcal P}_n$ and ${\mathcal C}_n$ are  varieties of dimensions $2n$ and $2n-8$, respectively.  Denote by  $T:{\mathcal P}_n\to {\mathcal P}_n$  the pentagram map (the $i$th vertex of the image is the intersection of diagonals $(V_i,V_{i+2})$ and $(V_{i+1},V_{i+3})$.)

One can introduce coordinates $X_1,Y_1,\dots,X_n,Y_n$ in ${\mathcal P}_n$ where $X_i,Y_i$ are the so-called corner invariants associated with $i$th vertex, discrete versions of  projective curvature, see \cite{Sch3}. In these coordinates, the pentagram map is a rational transformation
\begin{equation} \label{pentaformula}
X^*_i=X_i\,\frac{1-X_{i-1}\,Y_{i-1}}{1-X_{i+1}\,Y_{i+1}},
\qquad
Y^*_i=Y_{i+1}\,\frac{1-X_{i+2}\,Y_{i+2}}{1-X_{i}\,Y_{i}}
\end{equation}
(the indices are taken mod $n$). 

In \cite{Sch2}, Schwartz proved that the pentagram map was recurrent, and in \cite{Sch3}, he proved that the pentagram map had $2[n/2]+2$ independent integrals, polynomial in the variables $X_i,Y_i$. He conjectured that the pentagram map was a discrete completely integrable system. 

This was proved in \cite{OST1,OST2}: the space ${\mathcal P}_n$ has a $T$-invariant Poisson structure whose corank equals 2 or 4, according as $n$ is odd or even, and the integrals are in involution. This provides Liouville integrability of the pentagram map on the space of twisted polygons.

F.~Soloviev \cite{So} established algebraic-geometric integrability of the pentagram map by constructing its Lax (zero curvature) representation. His approach established complete integrability of the pentagram map on the space of closed polygons ${\mathcal C}_n$ as well; a different proof of this result was given in \cite{OST3}.

It is worth mentioning that the continuous limit as $n\to \infty$ of the pentagram map is the Boussinesq equation, one of the best known completely integrable PDEs. More specifically, in the limit, a twisted polygon becomes a parametric curve (with monodromy) in the projective plane, and the map becomes a flow on the moduli space of projective equivalence classes of such curve. This flow is identified with the Boussinesq equation, see \cite{Sch1,OST2}. Thus the pentagram map is a discretization, both space- and time-wise, of the Boussinesq equation.

R.~Schwartz and S.~Tabachnikov  discovered several configuration theorems of  projective geometry related to the pentagram map in~\cite{ST1} and   found identities between the integrals of the pentagram map on polygons inscribed into a conic in~\cite{ST2}.
R.~Schwartz~\cite{Sch4} proved that the integrals of the pentagram map do not change in the 1-parameter family of Poncelet polygons (polygons inscribed into a conic and circumscribed about a conic).

It was shown in \cite{Sch3} that the pentagram map was intimately related to the so-called octahedral recurrence (also known as the discrete Hirota equation), and it was conjectured in \cite{OST1,OST2} that the pentagram map was related to cluster transformations. This relation was discovered and explored by Glick \cite{Gl} who proved that the pentagram map, acting on the quotient space ${\mathcal P}_n/\R^*$ (the action of $\R^*$ commutes with the map and is given by the formula
$X_i \mapsto tX_i, Y_i \mapsto t^{-1} Y_i $), is described by coefficient dynamics \cite{FZ4} -- also known as $\tau$-transformations, see Chapter 4 in \cite{GSV3} -- for a certain cluster structure.

In this paper, expanding on the  research announcement \cite{GSTV}, we generalize Glick's work by including the pentagram map into a family of discrete completely integrable systems.
Our main tool is Poisson geometry of weighted  directed networks on surfaces.
The ingredients necessary for complete integrability -- invariant Poisson brackets, integrals of motion in involution, Lax representation -- are recovered from combinatorics of the networks.

A. Postnikov \cite{Po} introduced such networks
in the case of a disk and investigated their transformations and their relation  to cluster
transformations; most of his results are local, and
hence remain valid for networks on any surface. Poisson
properties of weighted directed networks in a disk and their relation to r-matrix
structures on $GL_n$ are studied in \cite{GSV2}. In \cite{GSV4} these results were
further extended to networks in an annulus and r-matrix Poisson
structures on matrix-valued rational functions. Applications of these techniques
to the study of integrable systems can be found in \cite{GSV5}. A detailed
presentation of the theory of weighted directed networks from a cluster algebra
perspective can be found in  Chapters 8--10 of \cite{GSV3}.

Our integrable systems, $T_k$, depend on one discrete parameter $k\ge2$. The geometric meaning of $k-1$ is the dimension of the ambient projective space.  The case $k=3$ corresponds to the pentagram map, acting on planar polygons. 

For $k\ge4$, we interpret $T_k$ as a transformation of a class of twisted polygons in $\RP^{k-1}$, called {\it corrugated polygons}. The map is given by intersecting consecutive diagonals of combinatorial length $k-1$ (i.e., connecting vertex $V_i$ with $V_{i+k-1}$); corrugated polygons are defined as the ones for which such consecutive diagonals are coplanar. The map $T_k$ is closely related with a pentagram-like map in the plane, involving deeper diagonals of polygons.

For $k=2$, we give a different geometric interpretation of our system: the map $T_2$ acts on pairs of twisted polygons in $\RP^1$ having the same monodromy (these polygons may be thought of as ideal polygons in the hyperbolic plane by identifying $\RP^1$ with the circle at infinity), and the action is given by an explicit construction that we call the {\it leapfrog} map  whereby one polygon ``jumps" over another, see a description in Section \ref{geom}. If the ground field is $\C$, we interpret the map $T_2$ in terms of circle patterns  studied by O. Schramm \cite{Sc,BH}.

The pentagram map is coming of age, and we finish this introduction by briefly mentioning, in random order, 
some  related work that appeared since our initial research announcement~\cite{GSTV} was written.
\begin{itemize}
\item A variety of multi-dimensional versions of the pentagram map, integrable and non-integrable, was studied by B. Khesin and F. Soloviev \cite{KhSo1,KhSo2,KhSo3,KhSo4}, and by G. Mari-Beffa \cite{MB,MB2}. The continuous limits of these maps are identified with the Adler-Gelfand-Dikii flows.
\item V. Fock and A. Marshakov \cite{FM} described a class of integrable systems on Poisson submanifolds of the affine Poisson-Lie groups $\PGL(N)$. The pentagram map is a particular example. 
The quotient of the corresponding integrable system by the scaling action (see $p$-dynamics $\bar T_k$ defined in Section~\ref{pq})  coincides with the integrable system constructed by A. Goncharov and R. Kenyon out of dimer
models on a two-dimensional torus and classified by the Newton polygons  \cite{GoK}.
\item M. Glick \cite{Gl2,Gl3} established the singularity confinement property of the pentagram map and some other discrete dynamical systems.
\item R.~Kedem and P.~Vichitkunakorn~\cite{KV} interpreted the pentagram map in terms of $T$-systems.
\item The pentagram map is amenable for tropicalization. A study of the 
tropical limit of the pentagram map was done by T. Kato \cite{Ka}.
\end{itemize}

\section{Generalized Glick's quivers and the\\ $(\bop,\boq)$-dynamics} \label{pq}

For any integer $n\ge 2$, let  $\bop=(p_1,\dots,p_n)$ and $\boq=(q_1,\dots,q_n)$ be independent variables. Fix an
integer $k$, $2\le k\le n$, and consider the quiver (an oriented multigraph without loops and cycles of length 
two) ${\mathcal Q}_{k,n}$
defined as follows: $\caq_{k,n}$ is a bipartite graph on $2n$ vertices labeled $p_1,\dots, p_n$ and $q_1,\dots,q_n$ 
(the labeling is cyclic, so that $n+1$ is the same as $1$). The graph is invariant under the shift $i\mapsto i+1$.
Each vertex has two incoming and two outgoing edges. The number $k$ is the ``span" of the quiver, that is, the distance between two outgoing edges from a $p$-vertex, see Fig.~\ref{quiver} where $r=\lfloor k/2\rfloor-1$ and $r+r'=k-2$ (in other words, $r'=r$ for $k$ even 
and $r'=r+1$ for $k$ odd). For $k=3$, we have Glick's  quiver \cite{Gl}.

\begin{figure}[hbtp]
\centering
\includegraphics[height=1.2in]{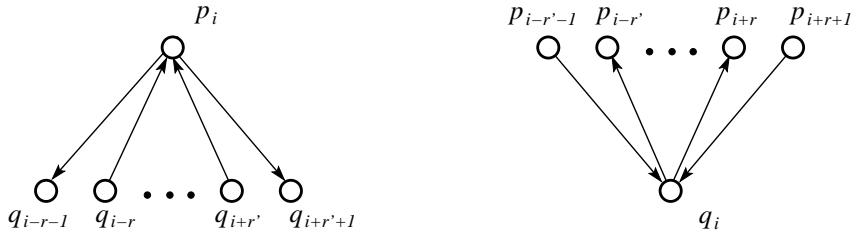}
\caption{The quiver ${\mathcal Q}_{k,n}$}
\label{quiver}
\end{figure}

Let us equip the $(\bop,\boq)$-space with a Poisson structure $\Poi_k$ as follows.  Denote by $A=(a_{ij})$ the $2n\times 2n$ skew-adjacency matrix of  ${\mathcal Q}_{k,n}$, assuming that the first $n$ rows and columns correspond to $p$-vertices. Then we put $\{v_i,v_j\}_k
=a_{ij}v_iv_j$, where $v_i=p_i$ for $1\le i\le n$ and $v_i=q_{i-n}$ for $n+1\le i\le 2n$.

Consider a transformation of the $(\bop,\boq)$-space to itself denoted by $\overline{T}_k$ and defined as follows (the new variables are marked by asterisk):
\begin{equation} \label{mappq}
q_i^*=\frac{1}{p_{i+r-r'}},\quad p_i^*=q_i\frac{(1+p_{i-r'-1})(1+p_{i+r+1})p_{i-r'}p_{i+r}}{(1+p_{i-r'})(1+p_{i+r})}.
\end{equation}

\begin{theorem}\label{pqsyst}
{\rm (i)} The Poisson structure $\Poi_k$ is invariant under the map $\overline{T}_k$.

{\rm (ii)} The function $\prod_{i=1}^n p_iq_i$ is an integral of the map $\overline{T}_k$. Besides, it is
Casimir, and hence the Poisson structure and the map descend to level hypersurfaces of $\prod_{i=1}^n p_iq_i$.
\end{theorem}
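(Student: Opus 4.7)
The plan is to handle part (ii) by direct computation and to approach part (i) by realizing $\overline{T}_k$ as a composition of $\tau$-transformations associated with a specific mutation sequence of $\caq_{k,n}$ that returns the quiver to itself up to a vertex relabeling.

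For the Casimir assertion in (ii), I use the fact that for a log-canonical bracket $\{v_i,v_j\}_k=a_{ij}v_iv_j$ with skew-symmetric $A$, a monomial $\prod_i v_i^{c_i}$ is a Casimir if and only if $A\mathbf{c}=0$; this follows from the Leibniz rule. Taking $\mathbf{c}=(1,\dots,1)$, the condition reduces to each row of $A$ summing to zero, which holds because every vertex of $\caq_{k,n}$ has exactly two incoming and two outgoing edges, yielding signed row sum $2-2=0$. For the integral assertion, I compute $\prod_{i=1}^n p_i^*q_i^*$ directly from (2.1): under the cyclic product each of $(1+p_{i-r'-1})$, $(1+p_{i+r+1})$, $(1+p_{i-r'})$, $(1+p_{i+r})$ reindexes to the common value $\prod_j(1+p_j)$, so the $(1+\cdot)$-factors cancel between numerator and denominator; likewise $\prod_i p_{i-r'}=\prod_i p_{i+r}=\prod_i p_{i+r-r'}=\prod_i p_i$, and what remains collapses to $\prod_i p_iq_i$.

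Part (i) is the main obstacle. The shape of (2.1)---a monomial times a ratio of $(1+\text{monomial})$ factors---strongly suggests $\tau$-dynamics, while the substitution $q_i^*=1/p_{i+r-r'}$ indicates that the dynamics is combined with a shift and interchange of the $p$- and $q$-labels. Accordingly, I plan to exhibit $\overline{T}_k$ as a composition of $\tau$-transformations at a carefully chosen sequence of vertices of $\caq_{k,n}$, followed by the relabeling $(p_i,q_i)\mapsto(q_{i-r+r'},1/p_{i+r-r'})$ that brings the mutated quiver back to $\caq_{k,n}$. Since each $\tau$-transformation preserves the compatible log-canonical Poisson bracket of its quiver (Chapter~4 of \cite{GSV3}), and the final relabeling preserves the bracket by virtue of returning the quiver to itself, invariance of $\Poi_k$ under $\overline{T}_k$ follows.

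The hard part will be the combinatorial bookkeeping: identifying the correct mutation sequence, tracking the intermediate quivers, and verifying that the composed cluster variables reproduce (2.1) exactly. The parity split $r'=r$ (for $k$ even) versus $r'=r+1$ (for $k$ odd) will likely require separate verification, since the shift in the final relabeling differs in the two cases. As a fallback, if a mutation-theoretic realization proves unwieldy, one can verify part (i) by a direct (if tedious) computation of $\{p_i^*,p_j^*\}_k$, $\{p_i^*,q_j^*\}_k$, and $\{q_i^*,q_j^*\}_k$ using the log-canonical rules together with the Leibniz rule applied to the $(1+p_\cdot)$ factors, checking that the resulting coefficients match the entries of $A$.
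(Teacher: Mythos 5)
Your plan is exactly the paper's proof: the mutation sequence you are looking for is simply the set of all $p$-vertices mutated simultaneously (they commute since $\caq_{k,n}$ is bipartite), after which the four new edges created at each $p_i$ cancel against those created by the mutations at $p_{i\pm1}$ and $p_{i\pm(k-1)}$, leaving $\caq_{k,n}$ with $p$ and $q$ interchanged, so that composing with the relabeling $\bar p_i=q_i$, $\bar q_i=p_{i+r-r'}$ (note there is no index shift on the $p$-side, unlike in your formula) recovers \eqref{mappq}, and compatibility of $\Poi_k$ with the cluster structure (Theorem~4.5 of \cite{GSV3}) then yields invariance exactly as you argue. Your part (ii) --- the row-sum criterion for the Casimir and the telescoping of the cyclic product --- likewise coincides with the paper's direct verification.
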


\begin{proof}
(i) Recall that given an arbitrary quiver $\caq$, its {\it mutation\/} at vertex $v$ is defined as follows: 

1) for any pair of edges $u'\to v$, $v\to u''$ in $\caq$, the edge $u'\to u''$ is added;

2) all edges incident to $v$ reverse their direction; 

3) all cycles of length two are erased.

\noindent The obtained quiver is said to be {\it mutationally equivalent\/} to the initial one. 
Assume that an independent variable $\tau_v$ is assigned to each vertex of $\caq$. According to Lemma 4.4 of \cite{GSV3}, the cluster transformation of $\tau$-coordinates 
corresponding to the quiver mutation at vertex $v$ (also known as cluster $Y$-dynamics) is defined as follows:
\begin{equation}\label{clust}
\tau^*_v=\frac1{\tau_v},\qquad \tau^*_u=\begin{cases}
\tau_u(1+\tau_v)^{\#(u,v)}\quad\text{if $\#(u,v)>0$},\\
\tau_u\frac{\tau_v^{\#(v,u)}}{(1+\tau_v)^{\#(v,u)}}\qquad\text{if $\#(v,u)>0$},\\
\tau_u\qquad\qquad\qquad\quad\text{otherwise},
\end{cases}
\end{equation}
where $\#(u',u'')$ is the number of edges from $u'$ to $u''$ in $\caq$. Note that at most one of the numbers $\#(u,v)$ and $\#(v,u)$ is nonzero for any vertex $u$.
The cluster structure assosiated with the initial quiver $\caq$ and initial set of variables $\{\tau_v\}_{v\in\caq}$ consists of all quivers mutationally equivalent to $\caq$ and of the corresponding sets 
of variables obtained by repeated application of~\eqref{clust}.

Consider the cluster structure associated with the quiver ${\mathcal Q}_{k,n}$. Choose variables $\bop=(p_1,\dots,p_n)$ and
$\boq=(q_1,\dots,q_n)$ as $\tau$-coordinates,
and consider cluster transformations corresponding to the quiver mutations at the $p$-vertices. These transformations commute, and we perform them simultaneously. By~\eqref{clust}, this leads to the transformation
\begin{equation} \label{exch}
p_i^*=\frac{1}{p_i},\quad q_i^*=q_i\frac{(1+p_{i-r'-1})(1+p_{i+r+1})p_{i-r'}p_{i+r}}{(1+p_{i-r'})(1+p_{i+r})}.
\end{equation}
The resulting  quiver is identical to $\caq_{k,n}$ with the letters $p$ and $q$ interchanged. 
Indeed, the mutation at $p_i$ generates four new edges $q_{i-r}\to q_{i+r'+1}$, $q_{i+r'}\to q_{i+r'+1}$, 
$q_{i-r}\to q_{i-r-1}$, and $q_{i+r'}\to q_{i-r-1}$. The first of them disappears after the mutation at $p_{i+1}$,
the second after the mutation at $p_{i+k-1}$, the third after the mutation at $p_{i-k+1}$, and the fourth after 
the mutation at $p_{i-1}$. Therefore, the result of mutations at all $p$-vertices is just the reversal of all edges of $\caq_{k,n}$.
Thus we compose 
transformation~\eqref{exch} with the 
transformation given by $\bar p_i=q_i$, $\bar q_i =p_{i+r-r'}$  and arrive at the 
transformation $\overline{T}_k$ defined by \eqref{mappq}. The difference in the formulas for the odd and even $k$ is due to the asymmetry between left and right in the enumeration of vertices in Fig.~\ref{quiver} for odd $k$, when $r'\ne r$.

A Poisson structure $\Poi$ is said to be compatible with a cluster structure if  $\{x_i,x_j\}=c_{ij}x_ix_j$ 
for any two variables from the same cluster, where the constants $c_{ij}$ depend on the cluster (the cluster basis is related to the $\tau$-basis described above via monomial transformations; we will not need the explicit description of these transformations here).  
By Theorem~4.5 in \cite{GSV3}, the Poisson structure $\Poi_k$ is compatible with the above cluster structure. Consequently, $\Poi_k$
can be written in the basis~\eqref{exch} in the same way as above via the adjacency matrix of the resulting quiver. After the vertices are renamed, we arrive back at $\caq_{k,n}$, which means that $\Poi_k$ is invariant  under $\overline{T}_k$.

(ii) Invariance of the function $\prod_{i=1}^n p_i q_i$ means the equality $\prod_{i=1}^n p^*_i q^*_i=\prod_{i=1}^n p_i q_i$, which is checked directly by inspection of formulas~\eqref{mappq}. The statement that $\prod_{i=1}^n p_i q_i$ is Casimir (or, equivalently, commutes with any $p_i$ and $q_i$) follows from the form of the quiver, since every vertex has an equal number (2, exactly) of incoming and outgoing edges. Hence, the level hypersurface $\prod_{i=1}^n p_i q_i=\rm{const}$ is a Poisson submanifold, and, moreover, 
$\overline{T}_k$ preserves the hypersurface. 

\end{proof}

Along with the $p$-dynamics $\overline{T}_k$, when the mutations are performed at the $p$-vertices of the quiver 
${\mathcal Q}_{k,n}$, one may consider the respective $q$-dynamics $\overline{T}^\circ_k$, when the mutations are 
performed at $q$-vertices. Let us define an auxiliary map $\barD_k$ given by
\begin{equation}\label{bard}
\bar p_i=\frac1{q_i}, \qquad \bar q_i=\frac1{p_{i+r-r'}}.
\end{equation}
Note that $\barD_k$ is almost an involution: $\barD_k^2=S_{r-r'}$, where $S_t$ is the shift by $t$ in indices.
The following proposition describes relations between transformations $\barT$, $\barT^{-1}$, and $\barT^\circ$.

\begin{proposition}\label{bartinv}
{\rm (i)} Transformation $\barT^\circ_k$ coincides with $\barT_k^{-1}$ and is given by
\begin{equation} \label{mapqp}
p_i^*=\frac{1}{q_{i-r+r'}},\quad q_i^*=p_i\frac{(1+q_{i-r})(1+q_{i+r'})q_{i-r-1}q_{i+r'+1}}{(1+q_{i-r-1})(1+q_{i+r'+1})}.
\end{equation}

{\rm (ii)} Transformations $\barT^\circ_k$ and $\barT_k$ are almost conjugated by $\barD_k$:
\begin{equation}\label{pqconjug}
S_{r-r'}\circ\barT^\circ_k\circ\barD_k=\barD_k\circ\barT_k.
\end{equation}

{\rm (iii)} Let $\barD_{k,n}$ be given by $\bar p_i=q_{i-\lfloor(n+r-r')/2\rfloor}$, $\bar q_i=p_i$. Then
$$
\barT^\circ_k=\barD_{k,n}\circ\barT_{n+2-k}\circ\barD_{k,n}.
$$
\end{proposition}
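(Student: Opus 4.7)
The plan is to prove the three parts in order, with (i) supplying the main input.

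For part (i), the derivation of \eqref{mapqp} mirrors that of \eqref{mappq} in the proof of Theorem~\ref{pqsyst}. Performing simultaneous $Y$-dynamics mutations at all $q$-vertices of $\caq_{k,n}$, the same pairing argument used there for the $p$-mutations shows that the four new $p$-$p$ edges created at each $q$-vertex cancel in pairs against edges generated at four neighboring $q$-vertices, so the net effect on the quiver is once more just the reversal of every $p$-$q$ edge. Applying \eqref{clust} and exploiting the $p\leftrightarrow q$ symmetry of $\caq_{k,n}$ (with the roles of $r$ and $r'$ interchanged and an index shift) then yields
\[
q_i^{**}=1/q_i,\qquad p_i^{**}=p_i\,\frac{(1+q_{i-r})(1+q_{i+r'})\,q_{i-r-1}q_{i+r'+1}}{(1+q_{i-r-1})(1+q_{i+r'+1})}.
\]
Composing with the relabeling $\bar p_i=q_{i-r+r'}$, $\bar q_i=p_i$—the inverse of the relabeling used to define $\barT_k$—restores $\caq_{k,n}$ and produces \eqref{mapqp}. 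To verify $\barT_k^{-1}=\barT^\circ_k$, I substitute \eqref{mappq} into \eqref{mapqp}: the first coordinate reduces instantly via $q^*_{i-r+r'}=1/p_i$ to $1/q^*_{i-r+r'}=p_i$, and the second reduces to $q_i$ after collecting rational factors using the identity $(1+1/x)/(1/x)=1+x$.

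Part (ii) is verified coordinate by coordinate. Computing $\barD_k\circ\barT_k$ via \eqref{bard} and \eqref{mappq} yields explicit rational expressions on each coordinate; computing $S_{r-r'}\circ\barT^\circ_k\circ\barD_k$ via \eqref{bard}, \eqref{mapqp}, and the shift yields another set; both reduce to the same rational function of $(p_i,q_i)$ using only $1/(1/x)=x$ and $1+1/x=(1+x)/x$.

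Part (iii) contains the genuine combinatorial content. The claim is that the relabeling $\bar p_i=q_{i-s}$, $\bar q_i=p_i$ with $s=\lfloor(n+r-r')/2\rfloor$ carries $\caq_{k,n}$ to $\caq_{n+2-k,n}$. Indeed, in $\caq_{k,n}$ each $p$-vertex has out-neighbors $q_{i+r'}$ and $q_{i-r}$, so in the relabeled quiver the new $q$-vertex $\bar q_i=p_i$ has in-neighbors $\bar p_{i+r'+s}$ and $\bar p_{i-r+s}$; reading off the resulting span of the relabeled quiver gives a value in the class $2-k\pmod n$, whose canonical representative in $[2,n]$ is $n+2-k$, and the specific value of $s$ is fixed by matching the $\lfloor k'/2\rfloor-1$ convention of Fig.~\ref{quiver} for $k'=n+2-k$. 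Under this identification the $q$-dynamics of $\caq_{k,n}$ pulls back under $\barD_{k,n}$ to the $p$-dynamics of $\caq_{n+2-k,n}$, which is precisely $\barT_{n+2-k}$. The main obstacle lies here: carefully tracking the parity case distinction (even vs.\ odd $k$) through the definitions of $r$, $r'$, and $s$ so that the particular shift $\lfloor(n+r-r')/2\rfloor$ is exactly the one demanded by the quiver identification.
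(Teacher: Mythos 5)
Your proposal is correct and follows essentially the same route as the paper: part (i) via simultaneous mutations at the $q$-vertices composed with the relabeling $\bar p_i=q_{i-r+r'}$, $\bar q_i=p_i$, and part (iii) via the index-shift identification of $\caq_{k,n}$ with $\caq_{n+2-k,n}$ (the paper records this as $\hat r=\lfloor(n+r-r')/2\rfloor-r-1$, $\hat r+\hat r'=n-k$). The only organizational difference is that the paper factors $\barT_k=\barC_k\circ\barD_k$ with $\barC_k$ an involution, which yields the inverse formula and makes (ii) immediate from (i) together with $\barD_k^2=S_{r-r'}$, whereas you verify $\barT^\circ_k\circ\barT_k=\mathrm{id}$ and identity \eqref{pqconjug} by direct substitution --- equally valid and no harder.
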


\begin{proof} (i) Recall that $\barT_k$ is defined as the composition of the cluster transformation~\eqref{exch} and the shift 
$\bar p_i=q_i$, $\bar q_i =p_{i+r-r'}$ Equivalently, we can write $\barT_k=\barC_k\circ\barD_k$, where $\barC_k$ is given by
expressions reciprocal to those in the right-hand side of~\eqref{exch}.  It is easy to check that $\barC_k$ is an involution, and that $\barD_k^{-1}=S_{r'-r}\circ\barD_k$ is given by $\bar p_i=1/q_{i-r+r'}$,
$\bar q_i=1/p_i$. Consequently, $\barT_k^{-1}=\barD_k^{-1}\circ\barC_k$ is given by~\eqref{mapqp}.

To get the same relations for the transformartion $\barT^\circ_k$ one has to use an analog of~\eqref{exch}
$$
q_i^*=\frac1{q_i},\quad p_i^*=p_i\frac{(1+q_{i-r})(1+q_{i+r'})q_{i-r-1}q_{i+r'+1}}{(1+q_{i-r-1})(1+q_{i+r'+1})}
$$
and compose it with the map $\bar q_i=p_i$, $\bar p_i=q_{i-r+r'}$, see Fig.~\ref{quiver}.

(ii) Follows immediately from (i).

(iii) Follows from the fact that $\caq_{n+2-k,n}$ locally at the vertex $q_{i+\lfloor(n+r-r')/2\rfloor}$ has the same structure as $\caq_{k,n}$
at the vertex $p_i$. This is illustrated in Fig.~\ref{pandq}. 

\begin{figure}[hbtp]
\centering
\includegraphics[height=1.2in]{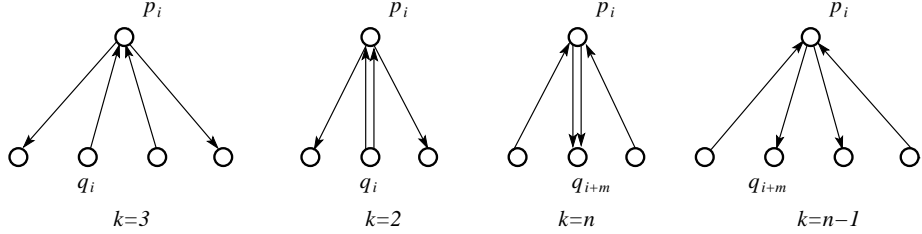}
\caption{The quivers ${\mathcal Q}_{k,n}$ for $n=2m$ and various values of $k$}
\label{pandq}
\end{figure}

For a formal proof note that the values $\hat r$ and $\hat r'$ corresponding to $\hat k=n+2-k$ are given by
\begin{equation}\label{hatr}
\hat r=\lfloor(n+r-r')/2\rfloor-r-1,\qquad \hat r+\hat r'=n-k.
\end{equation}
\end{proof}

By Theorem~\ref{pqsyst}(ii), $\barT_k$ restricts to any hypersurface $\prod_{i=1}^n p_iq_i=c$.
We denote this restriction by $\overline{T}^{(c)}_k$. In what follows, we shall be concerned only with $\overline{T}^{(1)}_k$.
Note that $\overline{T}^{(1)}_3$ is the  pentagram map on ${\mathcal P}_n/\R^*$ considered by Glick \cite{Gl}.

\section{Weighted directed networks and the 
$(\boxx,\boy)$-dynamics} \label{xysect}

\subsection{Weighted directed networks on surfaces}\label{xyint}
We start with a very brief description of the theory of weighted directed networks on surfaces with a boundary, adapted for our purposes; see \cite{Po,GSV3} for details. In this paper, we will only need to consider acyclic graphs on a cylinder (equivalently, annulus) $\mathcal{C}$ that we position horizontally with one boundary circle on the left and another on the right.

 Let $G$ be a directed acyclic graph with the vertex set $V$ and the edge set $E$ embedded in $\mathcal{C}$.
$G$ has $2n$
{\it boundary vertices\/}, each of degree one: $n$ {\it sources\/} on the left boundary circles and $n$ {\it sinks\/} on the right
boundary circle.
All the internal vertices of $G$ have degree~$3$ and are of two types: either they have exactly one
incoming edge (white vertices), or exactly one outgoing edge (black vertices).
To each edge $e\in E$ we assign the {\it edge weight\/} $w_{e}\in\R\setminus 0$.
A {\it perfect network\/} $\can$ is obtained from $G$
by adding an oriented  curve $\rho$ without self-intersections (called a {\it cut\/}) that joins  the left
and the right boundary circles and does not contain vertices of $G$. The points of the {\it space of edge weights\/} $\EE_\can$
can be considered as copies of $\can$  with edges weighted by nonzero real numbers.

 Assign an independent variable $\lambda$ to the cut $\rho$. 
 The weight of a directed path $P$ between a source and a sink
is defined as a signed product of the weights of all edges along the path times  $\lambda^d$, where $d$
is the intersection index of $\rho$ and $P$ (we assume that all intersection points are transversal, in which case the intersection index is the number of intersection points counted with signs). The sign is defined by the rotation number of the loop formed by the path, the cut, and parts of the boundary cycles (see \cite{GSV4} for details). In particular, the sign of a simple path going from one boundary circle to the other one and intersecting the cut $d$ times in the same direction equals $(-1)^d$. Besides, if a path $P$ can be decomposed in a path $P'$ and a simple cycle, then the signs of $P$ and $P'$ are opposite.
The {\it boundary measurement\/} between a given source  and  a given sink is then defined as the sum of path weights over all
(not necessary simple) paths between them. A boundary measurement is rational
 in the weights of edges and $\lambda$, see Proposition~2.2 in~\cite{GSV4}; in particular, if the network
 does not have oriented cycles then 
the boundary measurements are polynomials in edge weights, $\lambda$ and $\lambda^{-1}$.

Boundary measurements are organized in a {\it boundary measurement matrix}, thus giving rise to the {\it boundary measurement map\/}
from $\EE_\can$ to the space of $n\times n$ rational matrix
functions. The gauge group acts on $\EE_\can$ as follows: for any internal vertex $v$ of $\can$ and any Laurent monomial $L$
in the weights $w_e$ of $\can$, the weights of all edges leaving $v$ are multiplied by $L$, and the weights of all edges entering $v$ are
 multiplied by $L^{-1}$. Clearly, the weights of paths between boundary vertices, and hence boundary measurements,
 are preserved under this action.
Therefore, the boundary measurement map can be factorized through the space $\FF_\can$ defined as the
quotient of $\EE_\can$ by the action of the gauge group.

It is explained in \cite{GSV4} that $\FF_\can$ can be parametrized as follows.
The graph $G$ divides $\mathcal C$ into a finite number of connected components called
\emph{faces}. The boundary of each face consists of edges of $G$ and, possibly, of several arcs of
$\partial \mathcal C$.
A face is called {\it bounded\/} if its boundary contains only edges of $G$ and {\it unbounded\/} otherwise.
Given a face $f$, we define its {\it face weight\/}
$y_f=\prod_{e\in\partial f}w_e^{\gamma_e}$,
where $\gamma_e=1$ if the direction of $e$ is compatible with the counterclockwise orientation of the
boundary $\partial f$ and $\gamma_e=-1$ otherwise.
Face weights are invariant under the gauge group action.
Then $\FF_\can$ is parametrized by the collection of all face weights (subject to condition $\prod_f y_f=1$)
 and a weight of an arbitrary path in $G$ (not necessary directed) joining two boundary circles; such a path is called a {\it trail\/}.

Below we will frequently use elementary
transformations of weighted networks that do not change
the boundary measurement matrix. They were introduced by Postnikov in \cite{Po} and are presented in Fig.~\ref{moves}.

\begin{figure}[hbtp]
\centering
\includegraphics[height=2.8in]{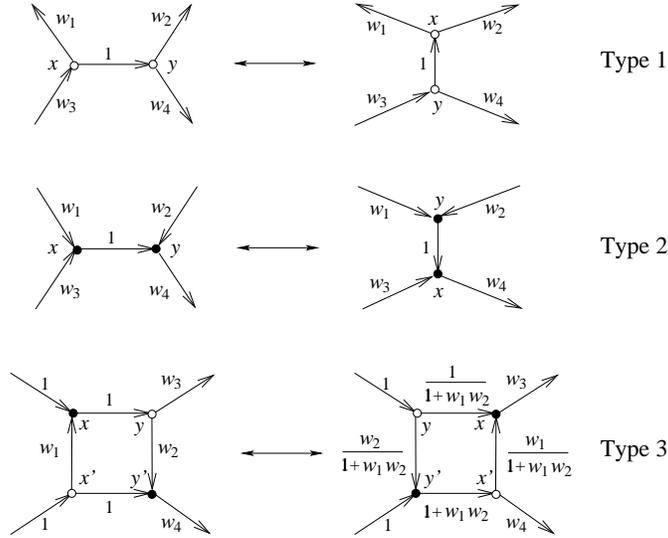}
\caption{Postnikov transformations }
\label{moves}
\end{figure}

Another important transformation is {\it path reversal\/}: for a given closed path one can reverse the directions of all its edges and replace each weight $w_i$ with $1/w_i$. Clearly, path reversal preserves face weights. The transformations of boundary measurements under path reversal are described in \cite{Po, GSV2, GSV4}.

As was shown in \cite{GSV2, GSV4}, the space of edge weights can be made into a Poisson manifold
by considering Poisson brackets
that behave nicely with respect to a natural operation of concatenation of networks. Such Poisson brackets on $\EE_\can$ form a 6-parameter
family, which is pushed forward to a 2-parameter family of Poisson brackets on $\FF_\can$. Here we will need a  specific member of the latter family. The corresponding Poisson structure, called {\em standard}, is described in terms of
the {\it directed dual network\/} $\can^*$ defined as follows. Vertices of $\can^*$ are the faces of $\can$.
Edges of $\can^*$ correspond to the edges of $\can$ that connect either two internal vertices of different colors,
or an internal vertex with a boundary vertex; note that there might be several edges between the same pair of
vertices in $\can^*$. An edge $e^*$ in $\can^*$ corresponding to $e$ in $\can$ is directed in such a way that the white endpoint of $e$ (if it exists) lies to the left of $e^*$ and
the black endpoint of $e$ (if it exists) lies to the right of $e$.
The weight $w^*(e^*)$ equals $1$ if both endpoints of $e$ are internal vertices, and $1/2$ if one of the
endpoints of $e$ is a boundary vertex.  Then
the restriction of the standard Poisson bracket on $\FF_\can$ to the space of face weights is given by
\begin{equation}
\label{facebracket}
\{y_f,y_{f'}\}=\left(\sum_{e^*: f\to f'} w^*(e^*)-
\sum_{e^*: f'\to f} w^*(e^*)\right)y_fy_{f'}.
\end{equation}
The bracket of the trail weight  $z$ and a face weight $y_f$ is given by $\{z, y_f\} = c_f z y_f$. The description of $c_f$ in the general case is rather lengthy. We will only need it in the case when the trail is a directed path $P$ in $G$. In this case
\begin{equation}
\label{trailbracket}
 \{z, y_f\} =\sum_{P'\subset P}\pm\left(\sum_{e\in P', e^*: f\to f'} w^*(e^*)-
\sum_{e\in P', e^*: f'\to f} w^*(e^*)\right)zy_f,
\end{equation}
where each $P'$ is a maximal subpath of $P$ that belongs to $\partial f$, and the sign before the internal sum is positive if $f$ lies to the right of $P'$ and negative otherwise.

Any network $\can$ of the kind described above gives rise to a network $\bar\can$ on a torus. To this end, one identifies boundary circles in such a way that the endpoints of the cut are glued together, and the $i$th source in the clockwise direction from the endpoint of the cut is glued to the $i$th sink in the clockwise direction from the opposite endpoint of the cut. The resulting two-valent vertices are then erased, so that every pair of glued edges becomes
a new edge with the weight equal to the product of two edge-weights involved. Similarly, $n$ pairs
of unbounded faces are glued together into $n$ new faces, whose face-weights are products
of pairs of face-weights involved. We will view two networks on a torus as {\em equivalent} if their
underlying graphs differ only by orientation of edges, but have the same vertex coloring and the same face
weights. The parameter space we associate with $\bar\can$ consists of face weights and the weights $z_\rho$, $z$ of two trails
$P_\rho$ and $P$. 
The first of them is homological to the closed curve on the torus obtained by identifying endpoints of the cut, and the second is noncontractible and not homological to the first one.
The standard Poisson bracket induces a Poisson bracket on face-weights of the new network, which is again given by~\eqref{facebracket} with the dual graph $\can^*$ replaced by $\bar\can^*$ defined by the same rules. The bracket between $z_\rho$ or $z$ and face-weights is given by~\eqref{trailbracket}, provided the corresponding trails are directed paths in $G$.  Finally, under the same restriction on the trails,
\begin{equation}
\label{twotrailbracket}
 \{z, z_\rho\} =\sum_{P'}c_{P'}zz_\rho,
\end{equation}
where each $P'$ is a maximal common subpath of $P$ and $P_\rho$ and $c_{P'}$ is defined in Fig.~\ref{defbrack}.  

\begin{figure}[hbtp]
\centering
\includegraphics[height=2in]{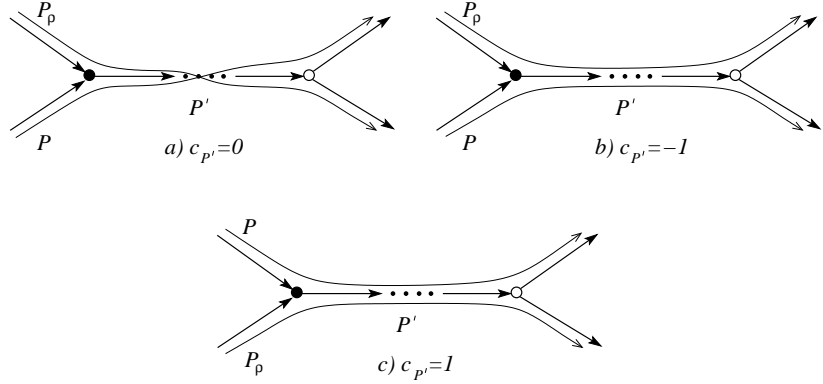}
\caption{To the definition of $c_{P'}$}
\label{defbrack}
\end{figure}

\subsection{The $(\boxx,\boy)$-dynamics}\label{xydyn}
Let us define a network ${\mathcal N}_{k,n}$ on the cylinder. It has $k$ sources, $k$ sinks, and $4n$ internal vertices, of which $2n$
are black, and $2n$ are white. $\can_{k,n}$ is glued of $n$ isomorphic pieces, 
as shown in Fig.~\ref{names}.

\begin{figure}[hbtp]
\centering
\includegraphics[height=1.2in]{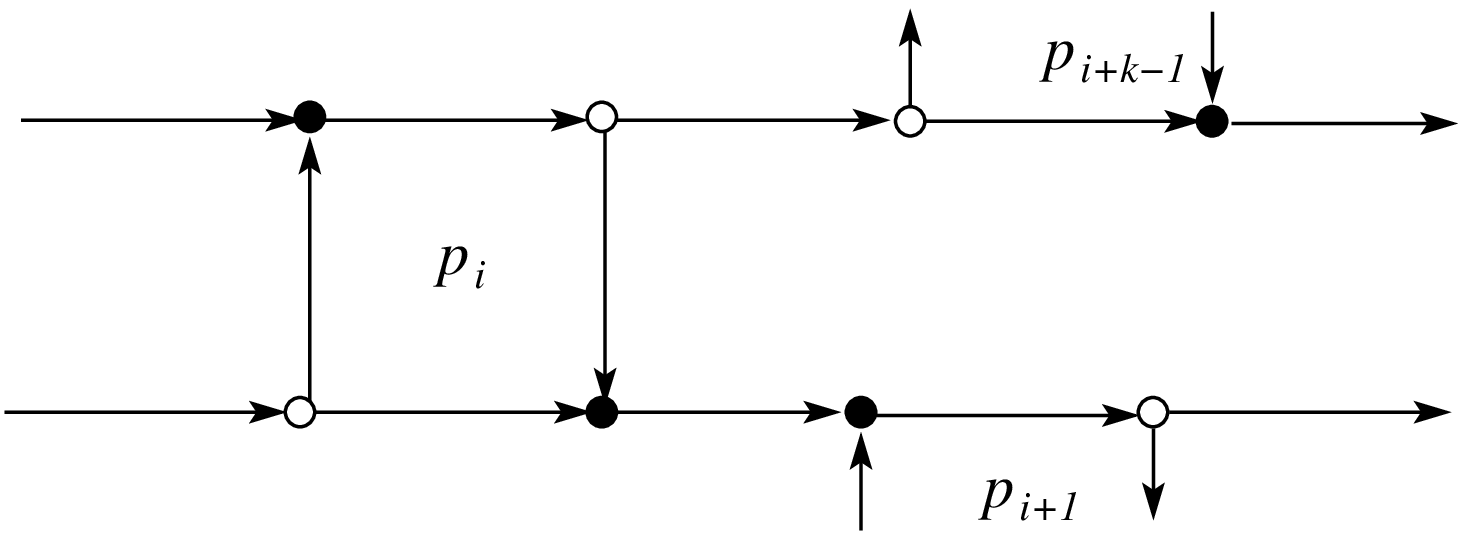}
\caption{Local structure of the networks ${\mathcal N}_{k,n}$  and $\bar\can_{k,n}$}
\label{names}
\end{figure}

The pieces are glued together in such a
way that the lower right edge of the $i$-th piece is identified with the
upper left edge of the $(i+1)$-th piece, provided $i+1\le n$, and the upper right edge of the
$i$-th piece is identified with the lower left edge of the $(i+k-1)$-st
piece, provided $i+k-1\le n$. The network $\bar\can_{k,n}$ on the torus is obtained by dropping the latter
restriction and considering cyclic labeling of pieces. 
The faces of $\bar\can_{k,n}$ are quadrilaterals and octagons. The cut hits only
octagonal faces and intersects each white-white edge. The network $\bar{\mathcal N}_{3,5}$  is shown in Fig.~\ref{network}. The figure depicts a torus, represented as a flattened two-sided  cylinder (the dashed lines are on the ``invisible" side); the edges marked by the same symbol are glued together accordingly.
The cut is shown by the thin line. The meaning of the weights $x_i$ and $y_i$ will be explained later.

\begin{figure}[hbtp]
\centering
\includegraphics[height=1.1in]{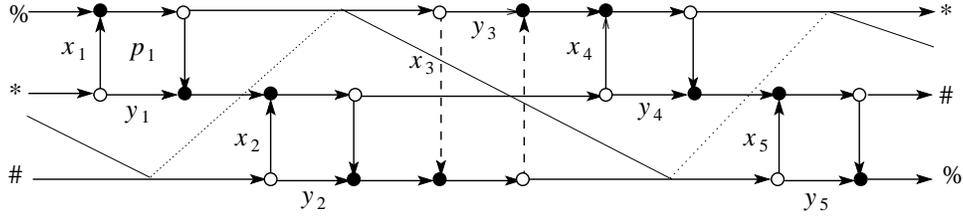}
\caption{The network $\bar{\mathcal N}_{3,5}$ on the torus}
\label{network}
\end{figure}

\begin{proposition}\label{dualnet}
The directed dual of $\bar\can_{k,n}$ is isomorphic to $\caq_{k,n}$.
\end{proposition}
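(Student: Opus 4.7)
The plan is to identify, one by one, all of the ingredients of the directed dual of $\bar\can_{k,n}$ and compare them with the description of $\caq_{k,n}$ in Section~\ref{pq}.

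First, I would count and label the faces. Since $\bar\can_{k,n}$ is obtained by cyclically gluing $n$ identical pieces of the type shown in Fig.~\ref{names}, and the gluing rules identify one quadrilateral and one octagonal face per piece (modulo the gluing), there are exactly $n$ quadrilateral faces and $n$ octagonal faces, giving a total of $2n$ dual vertices. I would label the quadrilaterals $p_1,\dots,p_n$ and the octagons $q_1,\dots,q_n$ following the cyclic piece index, in such a way that the labeling is compatible with the $\Z/n$--shift symmetry that the network inherits from its definition.

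Second, I would determine which edges of $\bar\can_{k,n}$ contribute to $\bar\can^{*}_{k,n}$. Since $\bar\can_{k,n}$ lives on a torus, there are no boundary vertices, so only edges joining a white and a black internal vertex give rise to dual edges. The cut is declared to cross every white-white edge, and in particular white-white edges do not appear in the dual. By a direct local inspection of the piece in Fig.~\ref{names}, each quadrilateral face has all four of its edges between differently-colored endpoints, while each octagonal face has exactly four white-black edges (the other four being white-white edges crossed by the cut). This matches the fact that every vertex of $\caq_{k,n}$ has exactly $4$ incident edges (two incoming and two outgoing). I would also check locally that every white-black edge separates a quadrilateral from an octagon, so the dual graph is bipartite with parts $\{p_i\}$ and $\{q_i\}$, matching the bipartition of $\caq_{k,n}$.

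Third, I would use the orientation rule (white endpoint to the left, black endpoint to the right of the dual edge) to orient each of the $4n$ dual edges, and compute precisely which $q_j$ is joined to each $p_i$ and in which direction. The spacing between the two outgoing neighbors of a given $p_i$ in the dual is controlled by the two gluing rules: the ``short'' gluing (lower right of piece $i$ to upper left of piece $i+1$) contributes edges of span close to $1$, while the ``long'' gluing (upper right of piece $i$ to lower left of piece $i+k-1$) contributes edges whose span is shifted by $k-1$. Keeping track of $r=\lfloor k/2\rfloor-1$ and $r'=k-2-r$, these spans should reproduce exactly the local picture in Fig.~\ref{quiver}, with the expected asymmetry between left and right when $k$ is odd and $r\ne r'$.

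The main obstacle is this last combinatorial bookkeeping: one has to trace how each octagonal face stretches across several pieces as a consequence of the two different gluings, and then verify that the resulting adjacencies and orientations (with all signs and cyclic shifts) coincide with those read off from Fig.~\ref{quiver}. Once the bipartition, the edge count, and the span are all matched, the isomorphism of directed multigraphs follows, since both graphs are shift-invariant and determined by their local pattern around a single $p$-vertex.
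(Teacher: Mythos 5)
Your plan is essentially the paper's proof: identify the $n$ quadrilateral faces with the $p$-vertices and the $n$ octagonal faces with the $q$-vertices, note that only bichromatic edges contribute to the directed dual, and recover the adjacencies and orientations of $\caq_{k,n}$ from the two gluing rules together with the shift-invariance of the construction; the paper simply executes the index bookkeeping you defer at the end, by labelling the four octagons around the quadrilateral $p_i$ as $q_{i-r-1}$ (left), $q_{i+r'}$ (above), $q_{i+r'+1}$ (right) and $q_{i-r}$ (below), and checking that these labels are consistent with both gluings. One small inaccuracy in your second step: the four monochromatic edges bounding an octagon are not all white-white --- each octagon is also bounded by black-black edges --- but this is harmless, since monochromatic edges contribute nothing to the dual in any case.
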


\begin{proof}
It follows from the construction above that $\bar\can_{k,n}$ has $2n$ faces, $n$ of them quadrilaterals and other $n$ octagons.
The quadrilateral faces correspond to $p$-vertices of the directed dual, and octagonal, to its $q$-vertices. Consider the quadrilateral corresponding to $p_i$. The four adjacent octagons are labelled as follows: the one to the left is $q_{i-r-1}$, the one above is $q_{i+r'}$, the one to the right is $q_{i+r'+1}$, and the one below is $q_{i-r}$. Therefore, the octagonal face to the left of the quadrilateral $p_{i+1}$ is $q_{i-r}$, and the one above it is $q_{i+r'+1}$, which justifies the first gluing rule above. 
Similarly, the octagonal face to the left of the quadrilateral $p_{i+k-1}$ is $q_{i+k-2-r}=q_{i+r'}$, and the one below it is
$q_{i+k-1-r}=q_{i+r'+1}$, which justifies the second gluing rule above, see Fig.~\ref{dual}, where the directed dual is shown with
dotted lines. Therefore, we have restored the adjacency structure of $\caq_{k,n}$.
\end{proof}

\begin{figure}[hbtp]
\centering
\includegraphics[height=1.5in]{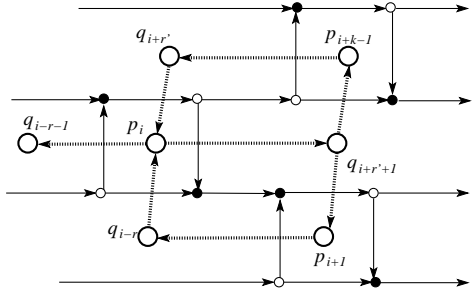}
\caption{The local structure of the directed dual of $\bar\can_{k,n}$}
\label{dual}
\end{figure}

\begin{corollary}\label{twobrackets}
The restriction of the standard Poisson bracket to the space of face weights of $\bar\can_{k,n}$ coincides with
the bracket $\{\cdot,\cdot\}_k$.
\end{corollary}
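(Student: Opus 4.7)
The plan is to deduce the corollary as an essentially immediate consequence of Proposition \ref{dualnet} together with the description of the standard Poisson bracket via the directed dual. The bracket $\{\cdot,\cdot\}_k$ on the $(\bop,\boq)$-space was defined as $\{v_i,v_j\}_k=a_{ij}v_iv_j$, where $A=(a_{ij})$ is the skew-adjacency matrix of $\caq_{k,n}$; so everything boils down to showing that the face-weight Poisson bracket of $\bar\can_{k,n}$ is governed by the same skew-adjacency matrix, under the identification of faces with quiver vertices from Proposition \ref{dualnet}.

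First I would observe that because $\bar\can_{k,n}$ is a network on the torus, the gluing construction from Section \ref{xyint} has eliminated all boundary vertices. Consequently, every edge $e^*$ of the directed dual $\bar\can_{k,n}^*$ has both endpoints at internal faces, so the dual weights collapse to $w^*(e^*)=1$ uniformly, and formula \eqref{facebracket} simplifies to
\begin{equation*}
\{y_f,y_{f'}\}=\bigl(\#\{e^*\colon f\to f'\}-\#\{e^*\colon f'\to f\}\bigr)\,y_fy_{f'}.
\end{equation*}
The coefficient on the right is precisely the $(f,f')$-entry of the skew-adjacency matrix of $\bar\can_{k,n}^*$.

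Next I would invoke Proposition \ref{dualnet}: the directed dual $\bar\can_{k,n}^*$ is isomorphic to $\caq_{k,n}$, with quadrilateral faces of $\bar\can_{k,n}$ identified with the $p$-vertices and octagonal faces with the $q$-vertices. Under this identification the skew-adjacency matrix of $\bar\can_{k,n}^*$ is exactly $A$, so the bracket displayed above agrees term-by-term with the defining formula $\{v_i,v_j\}_k=a_{ij}v_iv_j$ of $\Poi_k$, once the face weights $y_f$ are relabelled by the corresponding $p_i$ or $q_i$.

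There is essentially no obstacle here—everything is bookkeeping, and the only mild point to check is that no two-cycles arise in passing from the count of oppositely-oriented dual edges to the skew-adjacency matrix; but since $\caq_{k,n}$ has no two-cycles by definition and Proposition \ref{dualnet} already matches the underlying directed graphs, the two coincide literally and the corollary follows.
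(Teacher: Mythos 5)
Your proposal is correct and follows the paper's own (one-line) argument: the corollary is deduced directly from formula \eqref{facebracket} together with Proposition \ref{dualnet}, with your expanded bookkeeping (all dual weights equal to $1$ on the torus, matching of skew-adjacency matrices) being exactly the intended content. No issues.
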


\begin{proof} Follows immediately from~\eqref{facebracket} and Proposition~\ref{dualnet}, see Fig.~\ref{dual}.
\end{proof}

Assume that the edge weights around the face $p_i$ are $a_i$, $b_i$, $c_i$, and $d_i$, and 
all other weights are equal~$1$, see Fig.~\ref{gauge}. Besides, assume that
\begin{equation}\label{prodbc}
\prod_{i=1}^nb_ic_i=1.
\end{equation}
In what follows we will only deal with weights satisfying the above two conditions.

\begin{figure}[hbtp]
\centering
\includegraphics[height=0.9in]{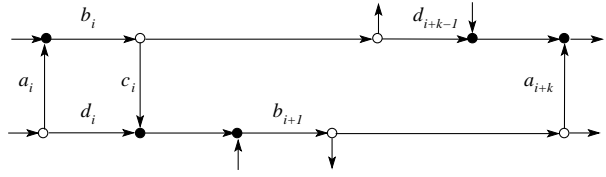}
\caption{Edge weights prior to the gauge group action}
\label{gauge}
\end{figure}

Applying the gauge group action, we can set to~$1$ the weights of the upper and the right edges of each quadrilateral face, while keeping weights of all edges with both endpoints of the same color equal to $1$.
For the face $p_i$, denote by $x_i$ the weight of the left edge and by $y_i$, the weight of the lower edge after the gauge group action (see Fig.~\ref{network}). Put
$\boxx=(x_1,\dots,x_n)$, $\boy=(y_1,\dots,y_n)$.

\begin{proposition}\label{weights} 
{\rm (i)} The weights $(\boxx,\boy)$ are given by
\begin{equation}\label{xyviabc}
x_i=a_ic_{i-k+1}^{-1}\prod_{j=i-k+2}^{i-1}b_j^{-1}c_j^{-1},\qquad y_i=d_ic_{i-k+1}^{-1}\prod_{j=i-k+2}^{i}b_j^{-1}c_j^{-1}.
\end{equation}

{\rm (ii)} The relation between $(\bop,\boq)$ and $(\boxx,\boy)$ is as follows:
\begin{equation}\label{pqviaxy}
p_i=\frac{y_i}{x_i},\quad q_i=\frac{x_{i+r+1}}{y_{i+r}};\qquad x_i=x_1 \prod_{j=1}^{i-1}p_jq_{j-r},\quad y_i=x_i p_i.
\end{equation}
\end{proposition}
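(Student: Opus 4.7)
The plan for part (i) is to exhibit the gauge transformation explicitly. The gauge group acts at each internal vertex $v$ by a scalar $L_v$, multiplying every outgoing edge weight by $L_v$ and every incoming edge weight by $L_v^{-1}$. Enforcing the prescribed gauge (every monochromatic edge together with the top and right edge of every quadrilateral carries weight $1$) amounts to the linear system (in logarithms) $L_{\mathrm{target}(e)} = w_e L_{\mathrm{source}(e)}$ for each such edge $e$. I would propagate these equations through one piece of $\can_{k,n}$ from Fig.~\ref{names}, chase the identifications imposed by monochromatic edges across neighboring pieces, and check that the system is globally consistent around the cylinder precisely because of the condition $\prod_{i=1}^n b_ic_i = 1$ from \eqref{prodbc}, which emerges as the cocycle obstruction. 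Once the $L_v$'s are fixed, the new weight of the left (respectively lower) edge of $p_i$ is a telescoping product of $L$-ratios that collapses to the formula for $x_i$ (respectively $y_i$) in \eqref{xyviabc}.

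For part (ii), the identities $p_i = y_i/x_i$ and $q_i = x_{i+r+1}/y_{i+r}$ both follow by computing face weights in the gauged network and invoking Proposition~\ref{dualnet}. In the quadrilateral face $p_i$, the only nontrivial boundary-edge weights are $x_i$ (left) and $y_i$ (lower); reading the signs $\gamma_e$ dictated by the orientation of each edge relative to the counterclockwise boundary produces face weight $y_i/x_i$. For the octagonal face labelled $q_i$, its eight boundary edges consist of six carrying weight $1$ (either monochromatic edges, or the already-gauged top/right edges of the neighboring quadrilaterals $p_{i-r'}$ and $p_{i-r'-1}$) together with the two nontrivial contributions coming from the left edge of $p_{i+r+1}$ (weight $x_{i+r+1}$) and the lower edge of $p_{i+r}$ (weight $y_{i+r}$); orientation bookkeeping yields face weight $x_{i+r+1}/y_{i+r}$. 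Proposition~\ref{dualnet} then identifies these two face weights with the cluster variables of $\caq_{k,n}$ labelled $p_i$ and $q_i$.

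The remaining two formulas in \eqref{pqviaxy} are algebraic consequences of the first two: $y_i = x_i p_i$ is a restatement of $p_i = y_i/x_i$, while the telescoping identity $p_j q_{j-r} = (y_j/x_j)(x_{j+1}/y_j) = x_{j+1}/x_j$ gives $\prod_{j=1}^{i-1} p_j q_{j-r} = x_i/x_1$, producing the last formula.

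The main obstacle is part (i): it requires a careful walk through the network to see that \eqref{prodbc} is exactly the cocycle condition for global consistency of the gauge transformation, and to verify that the resulting telescoping monomial for $x_i$ and $y_i$ has precisely the shape claimed (the range $i-k+2 \le j \le i-1$ in the product must match the combinatorial ``depth'' of how far back one must trace through the glued pieces). For part (ii), the only subtlety is sign/orientation bookkeeping on the octagon to confirm that the face weight is $x_{i+r+1}/y_{i+r}$ rather than its reciprocal, but this is a direct reading of the local figure.
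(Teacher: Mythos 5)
Your proposal is correct and follows essentially the same route as the paper: part (i) is precisely the paper's gauge-factor computation (the paper introduces factors $g_i^1,\dots,g_i^4$ at the four vertices of each quadrilateral, reduces the constraints to a recursion for $\rho_i=g_i^1/g_i^3$, and invokes \eqref{prodbc} for $n$-periodicity --- your ``cocycle condition''), and part (ii) is the same reading of face weights followed by the same telescoping. The only step you leave unexecuted is the explicit solution of that recursion, namely $\rho_i=c_{i-k+1}\prod_{j=i-k+2}^{i-1}b_jc_j$, which is what produces the index range in \eqref{xyviabc}; everything else matches the paper's argument.
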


\begin{proof}
(i) Assume that the gauge group action is given by $g_i^1$ at the upper left vertex of the $i$th quadrilateral, by $g_i^2$ at the upper right vertex, by $g_i^3$ at the lower left vertex, and by $g_i^4$ at the lower right vertex. The conditions on the upper and right edges of the quadrilateral give $b_ig_i^1/g_i^2=1$ and $c_ig_i^2/g_i^4=1$, while the conditions on the two external edges 
going right from the quadrilateral give $g_i^4=g_{i+1}^1$ and $g_i^2=g_{i+k-1}^3$. Denote $\rho_i=g_i^1/g_i^3$. From the first three
equations above we get $g_{i+1}^3=g_i^3b_ic_i\rho_i/\rho_{i+1}$. Iterating this relation $i+k-1$ times and taking into account the fourth equation above we arrive at $\rho_{i+k-1}=c_i\prod_{j=i+1}^{i+k-2}b_jc_j$, or
$$
\rho_i=c_{i-k+1}\prod_{j=i-k+2}^{i-1}b_jc_j.
$$

Now the first relation in~\eqref{xyviabc} is restored from $x_i=a_ig_i^3/g_i^1=a_i/\rho_i$.
To find $y_i$ we write
$$
y_i=d_i\frac{g_i^3}{g_i^4}=d_i\frac{g_i^3}{g_i^1}\frac{g_i^1}{g_i^2}\frac{g_i^2}{g_i^4}=\frac{d_i}{\rho_ib_ic_i},
$$
which justifies the second relation in~\eqref{xyviabc}. 
Note that $n$-periodicity of $\rho_i$, and hence of $x_i$ and $y_i$, is guaranteed by condition~\eqref{prodbc}. 

(ii) The expression for $p_i$ follows immediately from the definition of face weights. Next, the face weight for the octagonal face to the right of $p_i$ is $q_{i+r'+1}=x_{i+k}/y_{i+k-1}$, which yields $q_i=x_{i+k-r'-1}/x_{i+k-r'-2}=x_{i+r+1}/x_{i+r}$. The
remaining two formulas in~\eqref{pqviaxy} are direct consequences of the first two.
\end{proof}

Note that by~\eqref{pqviaxy}, the projection $\pi_k:(\boxx,\boy)\mapsto (\bop,\boq)$ has a 1-dimensional fiber.
Indeed, multiplying $x$ and $y$ by the same coefficient $t$ does not change the
corresponding $p$ and $q$.

It follows immediately from~\eqref{pqviaxy} that $\prod_{i=1}^n p_iq_i=1$, so the relevant map is 
$\barT^{(1)}_k$. Let us show how it 
can be described via equivalent transformations of the network $\bar\can_{k,n}$.
The transformations include Postnikov's moves of types 1, 2, and 3, and the gauge group action.
 We describe the sequence of these transformations below.

 We start with the network $\bar\can_{k,n}$ with weights $x_i$ and $y_i$ on the left and lower edge of each quadrilateral face.
First, we apply Postnikov's type 3 move at each $p$-face (this corresponds to cluster $\tau$-transformations at $p$-vertices
of $\caq_{k,n}$ given by~\eqref{exch}). To be able to use the type 3 move as shown in Fig.~\ref{moves} we have first to conjugate it
with the gauge action at the lower right vertex, so that $w_1=x_i$, $w_2=1/y_i$, $w_3=1$, $w_4=y$. 
Locally, the result is shown in Fig.~\ref{aftermut} where $\sigma_i=x_i+y_i$.

\begin{figure}[hbtp]
\centering
\includegraphics[height=1in]{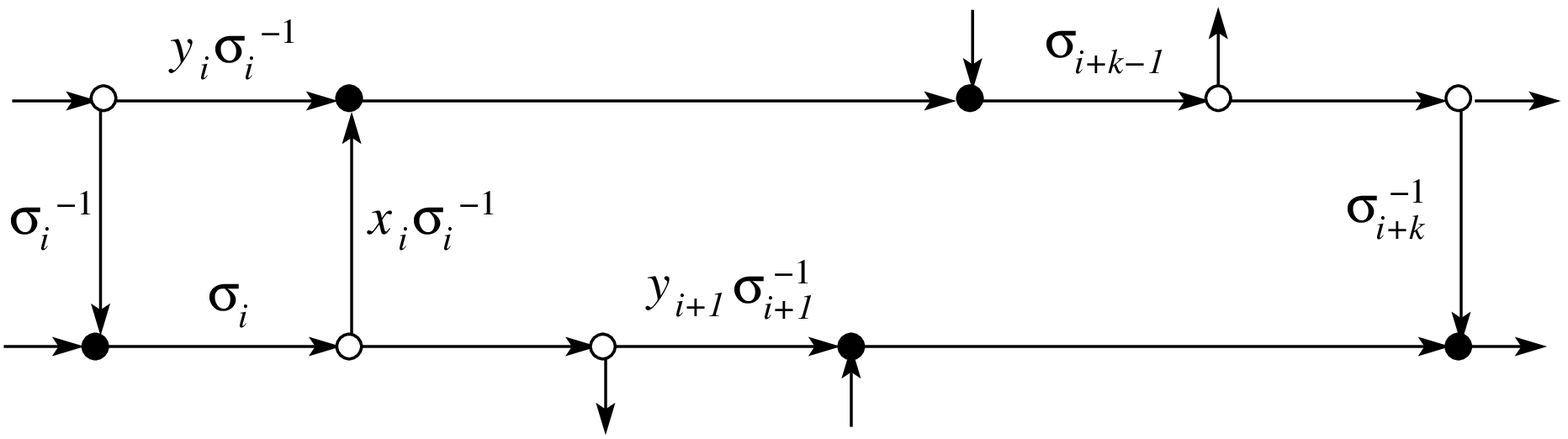}
\caption{Type 3 Postnikov's move for $\bar\can_{k,n}$}
\label{aftermut}
\end{figure}

Next, we apply type 1 and type 2 Postnikov's moves at each white-white and black-black edge, respectively.
 In particular, we move vertical arrows interchanging the right-most and the left-most position on the network in
 Fig.~\ref{network} using the fact that it is drawn on the torus. These moves interchange the quadrilateral and octagonal faces of the graph thereby swapping  the variables $p$ and~$q$, see Fig.~\ref{aftershift}.

\begin{figure}[hbtp]
\centering
\includegraphics[height=1in]{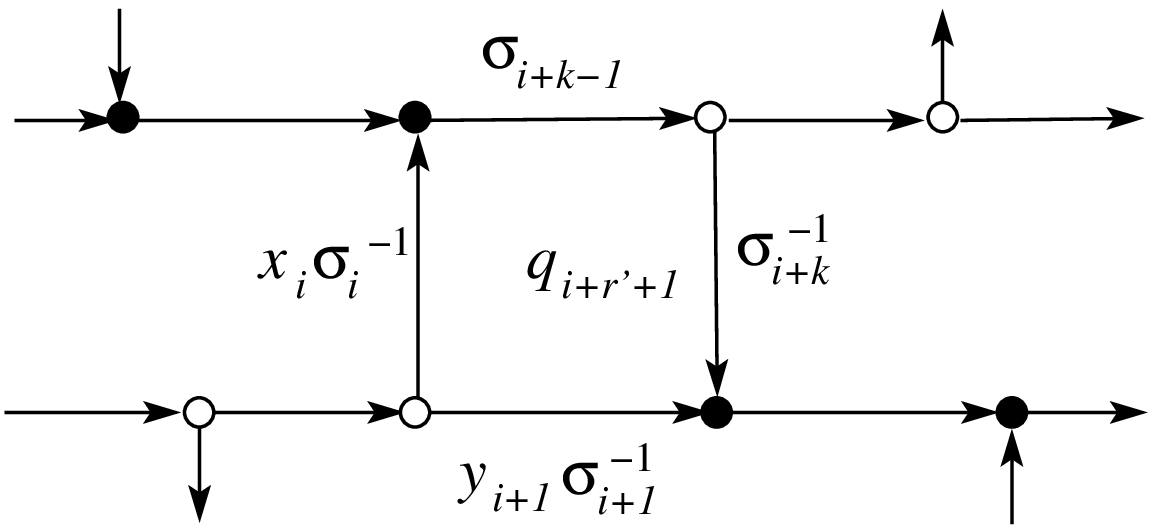}
\caption{Type 1 and 2 Postnikov's moves for $\bar\can_{k,n}$}
\label{aftershift}
\end{figure}

It remains to use gauge transformations to achieve the weights  as in Fig.~\ref{network}. In our situation, weights  $a_i$, $b_i$, $c_i$, $d_i$ are as follows, see Fig.~\ref{aftershift}:
\begin{equation}\label{abcd}
a_i=\frac{x_i}{\sigma_i},\quad b_i=\sigma_{i+k-1},\quad c_i=\frac{1}{\sigma_{i+k}},\quad d_i=\frac{y_{i+1}}{\sigma_{i+1}}.
\end{equation}
Note that condition~\eqref{prodbc} is satisfied.
This yields the map $T_k$, the main character of this paper, described in the following proposition.

\begin{proposition} \label{Tkdef}
{\rm (i)} The map $T_k$ is given by
\begin{equation} \label{mapxy}
x_i^*=x_{i-r'-1} \frac{x_{i+r}+y_{i+r}}{x_{i-r'-1}+y_{i-r'-1}},\quad y_i^*=y_{i-r'} \frac{x_{i+r+1}+y_{i+r+1}}{x_{i-r'}+y_{i-r'}},
\end{equation}

{\rm (ii)} The maps $T_k$ and $\overline{T}^{(1)}_k$ are conjugated via $\pi_k$: $\pi_k\circ T_k=\barT^{(1)}_k\circ \pi_k$.
\end{proposition}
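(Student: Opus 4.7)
The plan is to carry out the explicit sequence of network transformations set up in the text before the proposition and then extract $(\boxx^*,\boy^*)$ from the resulting network via formula \eqref{xyviabc}.

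For part (i), after performing the type~3 Postnikov move at every $p$-face and then the types~1 and~2 moves at all monochromatic edges, the roles of quadrilaterals and octagons in $\bar\can_{k,n}$ are swapped, and the edge weights surrounding each new quadrilateral are those displayed in \eqref{abcd}: $a_i^*=x_i/\sigma_i$, $b_i^*=\sigma_{i+k-1}$, $c_i^*=1/\sigma_{i+k}$, $d_i^*=y_{i+1}/\sigma_{i+1}$. Since $b_i^*c_i^*=\sigma_{i+k-1}/\sigma_{i+k}$ telescopes around the torus, the consistency condition \eqref{prodbc} is automatic. Substituting into \eqref{xyviabc}, the product $\prod_{j=i-k+2}^{i-1}(b_j^*c_j^*)^{-1}$ telescopes to $\sigma_{i+k-1}/\sigma_{i+1}$, which combined with $(c_{i-k+1}^*)^{-1}=\sigma_{i+1}$ and $a_i^*=x_i/\sigma_i$ gives, for the new face sitting where the old octagon $q_i$ used to be, the intermediate expressions $\tilde x_i=x_i\sigma_{i+k-1}/\sigma_i$ and $\tilde y_i=y_{i+1}\sigma_{i+k}/\sigma_{i+1}$. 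Formula \eqref{mapxy} is then recovered by restoring the labeling so that the new network is again $\bar\can_{k,n}$: this introduces the asymmetric shifts $i\mapsto i-r'-1$ for the $x$-edge and $i\mapsto i-r'$ for the $y$-edge, forced by the permutation $\bar D_k$ in \eqref{bard} and the gluing conventions of Figure~\ref{aftershift}. For odd $k$ this is the same $r\ne r'$ asymmetry highlighted in the proof of Theorem~\ref{pqsyst}.

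For part (ii), the conjugacy can be verified directly. Using $p_j=y_j/x_j$ and $q_j=x_{j+r+1}/y_{j+r}$, a straightforward cancellation in \eqref{mapxy} shows that
\[
\frac{y_i^*}{x_i^*}=q_i\,\frac{(1+p_{i-r'-1})(1+p_{i+r+1})p_{i-r'}p_{i+r}}{(1+p_{i-r'})(1+p_{i+r})},
\]
matching $p_i^*$ in \eqref{mappq}; and $x_{i+r+1}^*/y_{i+r}^*$ simplifies to $x_{i+r-r'}/y_{i+r-r'}=1/p_{i+r-r'}$, matching $q_i^*$. Conceptually, this is automatic: by Proposition~\ref{dualnet} the face weights of $\bar\can_{k,n}$ are precisely the $\tau$-coordinates of $\caq_{k,n}$, and by Lemma~4.4 of \cite{GSV3} the type~3 Postnikov moves at the $p$-faces realize the $\tau$-mutations \eqref{exch} at the $p$-vertices; the subsequent types~1 and~2 moves together with the gauge normalization implement the relabeling $\bar p_i=q_i$, $\bar q_i=p_{i+r-r'}$, so the composition is $\bar T_k^{(1)}$ by construction.

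The principal obstacle is the careful bookkeeping of the index shift produced when the new quadrilaterals inherit labels from the old octagons on the torus. This is what breaks the symmetry between the two formulas in \eqref{mapxy} and must be tracked by hand from the gluing of the pieces of $\bar\can_{k,n}$; once this relabeling is pinned down, all remaining steps are purely formal and reduce to the telescoping displayed above.
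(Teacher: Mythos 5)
Your proposal is correct and follows essentially the same route as the paper: substitute the post-move weights \eqref{abcd} into \eqref{xyviabc}, telescope the products of $\sigma$'s to get $x^*_{i+r'+1}=x_i\sigma_{i+k-1}/\sigma_i$, $y^*_{i+r'+1}=y_{i+1}\sigma_{i+k}/\sigma_{i+1}$, and then check (ii) directly against \eqref{mappq} via \eqref{pqviaxy}. The only quibble is a labeling slip (the new quadrilateral sits where the octagon $q_{i+r'+1}$, not $q_i$, used to be — which is exactly where the shift by $r'+1$ comes from), but your final formulas and verifications are right.
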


 \begin{proof}
(i) Applying relations~\eqref{xyviabc} to the weights~\eqref{abcd} we get
$$
x^*_{i+r'+1}=x_i\frac{\sigma_{i+k-1}}{\sigma_i},\qquad y^*_{i+r'+1}=y_{i+1}\frac{\sigma_{i+k}}{\sigma_{i+1}},
$$
which immediately implies~\eqref{mapxy}.

(ii) Checked straightforwardly using~\eqref{mappq},~\eqref{pqviaxy}, and~\eqref{mapxy}.
\end{proof}

\begin{remark} 
Note that the map $T_k$ commutes with the scaling action of the group $\R^*$: $(\boxx,\boy)\mapsto (t\boxx,t\boy)$, and that the orbits of this action are the fibers of the projection $\pi_k$.
\end{remark}

Maps $T_2$ and $T_3$ can be further described as follows. The map $T_2$ is a periodic version of the discretization
of the relativistic Toda lattice suggested in \cite{Su}.
It belongs to a family of Darboux-B\"acklund transformations of integrable lattices of Toda type, that were put into a cluster algebras framework in \cite{GSV5}.

\begin{proposition}
The map $T_3$ coincides with the pentagram map. 
\end{proposition}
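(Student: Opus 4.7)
My plan is to leverage Proposition~\ref{Tkdef}(ii) together with the theorem of Glick cited at the end of Section~\ref{pq}, which asserts that $\barT^{(1)}_3$ coincides with the pentagram map on $\mathcal{P}_n/\R^*$. By Proposition~\ref{Tkdef}(ii), the projection $\pi_3$ intertwines $T_3$ with $\barT^{(1)}_3$, so $T_3$ lifts $\barT^{(1)}_3$ along the one-dimensional fiber of $\pi_3$. Analogously, the pentagram map on $\mathcal{P}_n$ lifts its descent to $\mathcal{P}_n/\R^*$ along the scaling action $X_i\mapsto tX_i$, $Y_i\mapsto t^{-1}Y_i$ noted in the introduction. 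Both $T_3$ and the pentagram map are equivariant with respect to their respective $\R^*$-actions, so the task reduces to exhibiting a birational equivariant identification between $(\boxx,\boy)$-space and $\mathcal{P}_n$ that intertwines the two maps and descends to Glick's identification of the quotients.

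Concretely, I would first take Glick's explicit identification, in which each $p_i$ and $q_i$ is written as a scaling-invariant monomial in the corner invariants $X_j$, $Y_j$. Matching the factors $1+p_i$ appearing in the cluster exchange~\eqref{exch} with the factors $1-X_iY_i$ appearing in the pentagram formula~\eqref{pentaformula} suggests an identification of the form $p_i=-X_iY_i$ (up to index shifts, with an analogous monomial expression for $q_i$). Combined with $p_i=y_i/x_i$ and $q_i=x_{i+1}/y_i$ from~\eqref{pqviaxy}, this yields the key algebraic bridge
\[
1-X_iY_i \;=\; 1+\frac{y_i}{x_i} \;=\; \frac{\sigma_i}{x_i},\qquad \sigma_i:=x_i+y_i,
\]
which converts the $\sigma_i$-factors appearing in the $T_3$ formula~\eqref{mapxy} into the $1-X_iY_i$ factors appearing in the pentagram map. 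Having chosen a compatible section of the scaling fibration -- that is, expressing $X_i$ and $Y_i$ individually as monomials in neighboring $x_j$'s and $y_j$'s so that the action $(\boxx,\boy)\mapsto(t\boxx,t\boy)$ corresponds to $(X,Y)\mapsto(tX,t^{-1}Y)$ -- it then remains to substitute into~\eqref{pentaformula} and verify that the resulting transformation coincides with~\eqref{mapxy}.

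The main obstacle is bookkeeping rather than conceptual: one must pin down Glick's sign conventions and index shifts precisely, and confirm that the $\R^*$-action on $(\boxx,\boy)$-space corresponds, under the chosen section, to the Schwartz scaling on $\mathcal{P}_n$. Once the explicit dictionary is fixed, verifying that $T_3$ equals the pentagram map reduces to a direct algebraic substitution powered by the identity above and its cyclic analogues.
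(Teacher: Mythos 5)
Your proposal is correct in substance and, once the scaffolding is stripped away, lands on the same computation as the paper: the authors simply specialize~\eqref{mapxy} to $k=3$ and perform the monomial substitution $x_i\mapsto Y_i$, $y_i\mapsto -Y_iX_{i+1}Y_{i+1}$, under which $\sigma_i/x_i=1+p_i$ becomes $1-X_{i+1}Y_{i+1}$ --- exactly your ``key algebraic bridge,'' up to the index shift you anticipate --- and then verify directly that the transformed map is~\eqref{pentaformula} after a cyclic shift of indices (which commutes with both maps). The one caution concerns your opening reduction. Glick's theorem identifies $\barT^{(1)}_3$ with the pentagram map on ${\mathcal P}_n/\R^*$, and Proposition~\ref{Tkdef}(ii) says $\pi_3$ intertwines $T_3$ with $\barT^{(1)}_3$; but two $\R^*$-equivariant maps covering the same map on the quotient can differ by a scaling cocycle $(\boxx,\boy)\mapsto (f\boxx,f\boy)$ with $f$ a function of the base point, so equivariance plus agreement downstairs does not determine the lift. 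Consequently the ``remaining'' substitution check in your last step is not mere bookkeeping --- it is the entire proof, and one must confirm that a specific choice of section actually works (the substitution above does). The paper's proof is precisely that half-page verification, presented without the detour through the quotient; your route is a reasonable way to \emph{discover} the dictionary, but it does not shorten the argument.
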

\begin{proof}
Indeed, for $k=3$,~\eqref{mapxy} gives
\begin{equation} \label{k=3}
x_i^*=x_{i-2} \frac{x_{i}+y_{i}}{x_{i-2}+y_{i-2}},\quad y_i^*=y_{i-1} \frac{x_{i+1}+y_{i+1}}{x_{i-1}+y_{i-1}}.
\end{equation}
Change the variables as follows:
$
x_i\mapsto Y_i,\ y_i\mapsto -Y_iX_{i+1}Y_{i+1}.
$
In the new variables, the map~\eqref{k=3} is rewritten as
$$
X_i^*=X_{i-1} \frac{1-X_{i-2}Y_{i-2}}{1-X_iY_i},\quad Y_i^*=Y_i \frac{1-X_{i+1}Y_{i+1}}{1-X_{i-1}Y_{i-1}},
$$
which becomes formula~\eqref{pentaformula} after the cyclic shift  $X_i\mapsto X_{i+1}, Y_i\mapsto Y_{i+1}$. 
Note that the maps $T_k$, and in particular the pentagram map, commute with this shift.
\end{proof}

Similarly to what was done in the previous section, we may consider, along with the map $T_k$ based on $p$-dynamics $\barT_k$, 
another map, based on $q$-dynamics $\barT^\circ_k$; it is natural to denote this map by $T^\circ_k$. Its definition differs from that
of $T_k$ by the order in which the same steps are performed. First of all, type 1 and 2 Postnikov's moves are applied, which 
leads to quadrilateral faces looking like those in Fig.~\ref{aftermut}. The weights
of the left and the lower edge bounding the face labeled $q_i$ are thus equal to~1, the weight of the upper edge equals $y_{i+r}$, and the weight of the right edge equals $x_{i+r+1}$.  Next, the type 3 Postnikov's move is applied, followed by the gauge group 
action.

An alternative way to describe $T^\circ_k$ is to notice that the network $\bar\can_{k,n}$ can be redrawn in a different way.
 Recall that the network on the torus was obtained from the network on the cylinder by identifying the two boundary circles so that the
cut $\rho$ becomes a closed curve. Conversely, the network on the cylinder is obtained from the network
on the torus by cutting the torus along a closed curve. This curve intersects exactly once $k$ monochrome edges:
the black monochrome edge that points to the face $p_1$ and $k-1$ white monochrome edges that point to the faces $p_1,\dots, p_{k-1}$.
Alternatively, the torus can be cut along a different closed curve that intersects the same black monochrome edge and
all the $n-k+1$ remaining white monochrome edges. An alternative representation of $\bar\can_{3,5}$ is shown in Fig~\ref{alter}. The cut shown in Fig~\ref{alter} coincides  
with that in Fig.~\ref{network}. We can further reverse the closed path shown with
the dashed line and apply type 1 and 2 Postnikov moves at all white-white and black-black edges. It is easy to see that the resulting
network is isomorphic to $\bar\can_{4,5}$. In general, starting with $\bar\can_{k,n}$ and applying the same transformations one
gets a network isomorphic to $\bar\can_{n-k+2,n}$, which hints that $T^\circ_k$ and $T_{n-k+2}$ are related.

\begin{figure}[hbtp]
\centering
\includegraphics[height=3in]{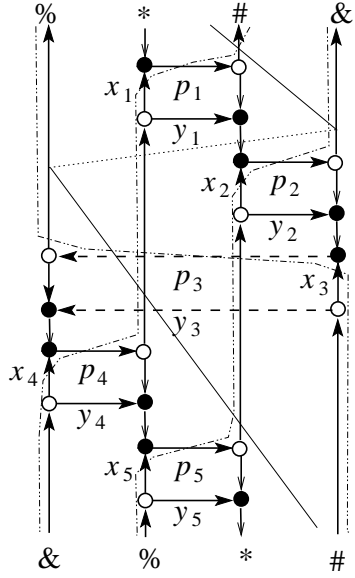}
\caption{An alternative representation of $\bar\can_{3,5}$}
\label{alter}
\end{figure}

Introduce an auxiliary map $D_k$ given by
\begin{equation}\label{mapdk}
x_i^*=\frac1{x_{i+r}}\prod_{j=i-r'}^{i+r-1}\frac{y_j}{x_j}, \qquad
y_i^*=\frac1{x_{i+r+1}}\prod_{j=i-r'}^{i+r}\frac{y_j}{x_j}.
\end{equation}
The following analog of Proposition~\ref{bartinv} explains the relation between $T$, $T^{-1}$ and $T^\circ$.

\begin{proposition} \label{tinv}
{\rm (i)} 
The maps $T_k^{-1}$ and $T^\circ_k$ coincide and are given by
\begin{equation}\label{mapxyinv}
x_i^*=x_{i+r'+1}\frac{x_{i-r}+y_{i-r-1}}{x_{i+r'+1}+y_{i+r'}},\qquad y_i^*=y_{i+r'}\frac{x_{i-r}+y_{i-r-1}}{x_{i+r'+1}+y_{i+r'}}.
\end{equation}

{\rm (ii)} The maps $T_k$ and $T^\circ_k$ are almost conjugated by $D_k$:
\begin{equation}\label{xyconj}
S_{r-r'}\circ T^\circ_k\circ D_k=D_k\circ T_k.
\end{equation}

{\rm (iii)} Let $D_{k,n}$ be given by $\bar x_i=y_{i-r-1}$, $\bar y_i=x_{i-r}$. Then
$$
T^\circ_k=D_{k,n}\circ T_{n-k+2}\circ D_{n-k+2,n}.
$$
\end{proposition}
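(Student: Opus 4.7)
The plan is to mirror the three-step layout of the proof of Proposition~\ref{bartinv}, now at the level of the $(\boxx,\boy)$-dynamics, and use the covering relation $\pi_k\circ T_k=\barT^{(1)}_k\circ\pi_k$ from Proposition~\ref{Tkdef}(ii) to transfer information from the $(\bop,\boq)$-quotient whenever that is cleaner.

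For (i), I would first derive the explicit formula~\eqref{mapxyinv} for $T_k^{-1}$ by directly inverting~\eqref{mapxy}: the two equations can be solved for the old variables after one recovers the ratios $\sigma_{i+r}/\sigma_{i-r'-1}$ from the $x_i^*$'s, and back-substitution gives the stated formula. To identify $T_k^{-1}$ with $T^\circ_k$, the cleanest route is via the covering: both $T^\circ_k$ and $T_k^{-1}$ commute with the $\R^*$-scaling on $(\boxx,\boy)$, and by Proposition~\ref{Tkdef}(ii) together with Proposition~\ref{bartinv}(i) they descend through $\pi_k$ to the same map $\barT^\circ_k=\barT_k^{-1}$ on $(\bop,\boq)$. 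Hence they can differ at most by a scalar function constant along scaling orbits, which is killed by comparing a single coordinate at a generic point. Alternatively, one can derive the same explicit formula for $T^\circ_k$ directly from its network definition (the same Postnikov moves applied in the alternative order: type~1, 2 first and type~3 at the $q$-faces afterwards), invoking involutivity of cluster $Y$-mutation at the level of the dual quiver~$\caq_{k,n}$.

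For (ii), I would first verify by a short telescoping computation on $\prod_j y_j/x_j$ in~\eqref{mapdk} that $D_k\circ D_k=S_{r-r'}$, exactly as in the $\barD_k^2=S_{r-r'}$ identity used downstairs. Then~\eqref{xyconj} reads $S_{r-r'}\circ T^\circ_k\circ D_k=D_k\circ T_k$; substituting $T^\circ_k=T_k^{-1}$ from (i) and using $S_{r-r'}=D_k^2$, both sides collapse to $D_k\circ T_k$, so (ii) follows. For (iii), the combinatorial heart is already in the proof of Proposition~\ref{bartinv}(iii): the local structure of $\caq_{n+2-k,n}$ at $q_{i+\lfloor(n+r-r')/2\rfloor}$ matches that of $\caq_{k,n}$ at $p_i$, and the network-theoretic observation around Fig.~\ref{alter} shows that $\bar\can_{n-k+2,n}$ is obtained from $\bar\can_{k,n}$ by re-cutting the torus along the alternative closed curve and running path reversal followed by the ensuing type~1, 2 Postnikov moves. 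Under this identification, the relabeling $\bar x_i=y_{i-r-1}$, $\bar y_i=x_{i-r}$ in $D_{k,n}$ is precisely the transition from quadrilateral face-weights of $\bar\can_{k,n}$ to those of $\bar\can_{n-k+2,n}$. I would then project both sides of the claimed identity to $(\bop,\boq)$ via $\pi_k$, invoke Proposition~\ref{bartinv}(iii), and fix the 1-parameter scaling ambiguity by a single coordinate inspection.

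The main obstacle I expect is index bookkeeping, in particular the parity-dependent offset $r-r'\in\{0,-1\}$ that shows up in $S_{r-r'}$, inside $D_{k,n}$ via $\lfloor(n+r-r')/2\rfloor$, and in the identities~\eqref{hatr} for $\hat r,\hat r'$. Keeping these offsets consistent across the three parts — especially through the change of variables in (iii) and when matching the local pictures of $\caq_{k,n}$ and $\caq_{n-k+2,n}$ — is where one is most likely to slip up.
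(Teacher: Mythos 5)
Your overall strategy matches the paper's: get the explicit formula for $T_k^{-1}$, identify it with $T_k^\circ$ by descending through $\pi_k$ to Proposition~\ref{bartinv}, and handle (iii) via the quiver/network identification plus \eqref{hatr}. Two of your shortcuts are fine modulo bookkeeping: direct inversion of \eqref{mapxy} does work (the key observation is $\sigma_{i+r}=x_i^*+y_{i-1}^*$, so you need both families $x_j^*,y_j^*$, not only the $x_j^*$'s, to recover the $\sigma$'s), and the ``scalar ambiguity along scaling orbits'' must be killed by verifying one coordinate as an identity of rational functions on a dense open set, not at a literal single point — which still forces you to compute at least one coordinate of $T_k^\circ$ explicitly from the network moves, so it saves less than it appears to.

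The genuine gap is in (ii). After substituting $T_k^\circ=T_k^{-1}$ and $S_{r-r'}=D_k^2$, the left-hand side of \eqref{xyconj} becomes $D_k^2\circ T_k^{-1}\circ D_k$, and equating this with $D_k\circ T_k$ is equivalent (cancelling one $D_k$) to the identity $D_k\circ T_k^{-1}\circ D_k=T_k$, i.e.\ to $T_k\circ D_k^{-1}$ being an involution. This is \emph{not} a formal consequence of the two facts you substitute; it is precisely the extra structural input the paper supplies in its proof of (i), namely the factorization $T_k=C_k\circ D_k$ with $C_k$ an explicit involution (whence $T_k^{-1}=D_k^{-1}\circ C_k$ and both sides of \eqref{xyconj} equal $D_k\circ C_k\circ D_k$). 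Since your route to (i) bypasses $C_k$ via direct inversion, you must either verify $D_k\circ T_k^{-1}\circ D_k=T_k$ by a separate computation from \eqref{mapdk} and \eqref{mapxyinv}, or establish the involutive factorization as the paper does; as written, ``both sides collapse'' does not close the argument.
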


\begin{proof}
(i)  The proof of~\eqref{mapxyinv} for $T_k^{-1}$ 
is similar to that of Proposition~\ref{bartinv}(i).
It is easy to check that the maps $D_k$ and $\barD_k$ given by~\eqref{mapdk} and~\eqref{bard} are conjugated via $\pi_k$: 
$\pi_k\circ D_k=\barD_k\circ \pi_k$.
Besides, define the map $C_k$ by 
\begin{equation*}
x_i^*=\frac{x_{i-k+1}+y_{i-k+1}}{x_{i-k+1}(x_i+y_i)}\prod_{j=i-k+2}^{i-1}\frac{y_j}{x_j}, \qquad
y_i^*=\frac{x_{i-k+1}+y_{i-k+1}}{x_{i-k+1}(x_i+y_i)}\prod_{j=i-k+2}^{i}\frac{y_j}{x_j}.
\end{equation*}
Similarly, $\pi_k\circ C_k=\barC_k\circ\pi_k$. Moreover, $T_k=C_k\circ D_k$. Therefore, $T_k^{-1}=D_k^{-1}\circ C_k^{-1}$.

A direct computation shows that $C_k$ is an involution, while $D_k^{-1}$ is given by
\begin{equation*}
x_i^*=\frac1{x_{i+r'}}\prod_{j=i-r}^{i+r'-1}\frac{y_j}{x_j}, \qquad
y_i^*=\frac1{x_{i+r'+1}}\prod_{j=i-r}^{i+r'}\frac{y_j}{x_j},
\end{equation*}
and~\eqref{mapxyinv} for $T_k^{-1}$ follows.

To prove~\eqref{mapxyinv} for $T^\circ_k$ one has to perform all the steps described above, similarly to what was
done in the proofs of Propositions~\ref{weights} and~\ref{Tkdef}.

(ii) Follows immediately from (i) and the relation $D_k^2=S_{r-r'}$.

(iii) Checked straightforwardly taking into account~\eqref{hatr}.
Note that transformations $D_{k,n}$ and $D_{n-k+2,n}$ are related to
$\barD_{k,n}$ via $\pi_{n-k+2}\circ D_{n-k+2,n}=\barD_{k,n}\circ \pi_k$ and $\pi_k\circ
 D_{k,n}=\barD_{k,n}\circ\pi_{n-k+2}$.
\end{proof}

\section{Poisson structure and complete integrability} \label{integrability}

The main result of this paper is {\em complete integrability} of
transformations $T_k$, i.e., the existence of a $T_k$-invariant Poisson
bracket and of
a maximal family of integrals in involution. The key ingredient of the proof is the result
obtained
in \cite{GSV4} on Poisson properties of the boundary measurement map
defined in Section~3.1. First, we recall the definition of an R-matrix
(Sklyanin)
bracket, which plays a crucial role in the modern theory of integrable
systems \cite{OPRS, FT}. The bracket is defined on the space of $n\times n$ rational matrix
functions $M(\lambda)=(m_{ij}(\lambda))_{i,j=1}^n$ and is given by
the formula
\begin{equation}
\label{sklya}
\left \{M(\lambda){\stackrel{\textstyle{\small{\otimes}}}{,}} M(\mu)\right\}
= \left [ R(\lambda,\mu), M(\lambda)\otimes M(\mu) \right ],
\end{equation}
where the left-hand is  understood as $\left \{M(\lambda)
{\stackrel{\textstyle{\otimes}}{,}} M(\mu)\right\}_{ii'}^{jj'}=\{m_
{ij}(\lambda),m_{i'j'}(\mu)\}$ and
an R-matrix $R(\lambda,\mu)$ is an operator in $\left(\R^n
\right )^{\otimes 2}$ depending on parameters $\lambda,\mu$ and solving
the classical Yang-Baxter equation. We are interested in the bracket
associated with the {\em trigonometric R-matrix} (for the explicit formula for it, which we will not need,
see \cite{OPRS}).

\subsection{Cuts, rims, and conjugate networks}
Let $\can$ be a perfect network on the cylinder; recall that $\bar\can$ stands for the perfect network  on the torus obtained
from $\can$ via the gluing procedure described in Section~3.1.

\begin{theorem}\label{cyltotor} 
For any perfect network $\hcan$ on the torus, there exists a perfect network $\can$ on the cylinder with sources and 
sinks belonging to different components of the boundary such that
$\bar\can$ is equivalent to $\hcan$, the map $\mathcal{E}_\can \to \mathcal{F}_{\hcan}$ is Poisson with respect to the standard Poisson structures, and spectral invariants of the image $M_\can(\lambda)$ of the boundary measurement map depend 
only on $\mathcal{F}_{\hcan}$. In particular, spectral invariants of  $M_\can(\lambda)$ form an involutive family of functions 
on $\mathcal{F}_{\hcan}$ with respect to the standard Poisson structure.
\end{theorem}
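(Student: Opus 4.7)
The plan is to reduce the torus statement to the already understood cylinder setting by constructing $\can$ as a cut of $\hcan$, then invoking the Poisson property of the boundary measurement map for cylinder networks established in \cite{GSV4} together with the classical R-matrix involutivity for spectral invariants of $M(\lambda)$.

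First I would construct $\can$ as follows. Choose a simple closed curve $\gamma$ on the torus, transverse to all edges of $\hcan$ and disjoint from its vertices, non-contractible and in a suitable homology class (say, the one corresponding to the trail $P_\rho$ parametrizing $\FF_{\hcan}$). Cutting along $\gamma$ opens the torus into a cylinder and creates boundary vertices of degree one where $\gamma$ crossed edges of $\hcan$; these boundary vertices split between the two boundary circles. Generically each boundary circle carries a mixture of sources and sinks. To push all sources to one side and all sinks to the other, apply path reversal, which reverses directions along a closed directed subnetwork and replaces each weight $w$ by $1/w$. As noted in Section~3.1, path reversal preserves face weights, so after finitely many such reversals we obtain a perfect network $\can$ on the cylinder with sources on one circle and sinks on the other, and with $\bar\can$ equivalent to $\hcan$ as weighted networks.

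Next, the Poisson property follows from \cite{GSV4}, where it is shown that for such a cylinder network the boundary measurement map $\EE_\can\to\{n\times n\text{ rational matrix functions}\}$ is Poisson, the target being endowed with the trigonometric Sklyanin bracket \eqref{sklya}. Gauge invariance lets the map factor through $\FF_\can$, and the gluing construction provides a further projection $\FF_\can\to\FF_{\hcan}$. Since the directed dual $\bar\can^*$ is literally the one used to define the standard bracket on $\FF_{\hcan}$ via \eqref{facebracket}, \eqref{trailbracket}, \eqref{twotrailbracket}, this projection is itself Poisson, and the two trail parameters on $\FF_{\hcan}$ correspond respectively to the cylinder trail weight $z$ and to the spectral parameter $\lambda$ attached to the cut $\gamma$.

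For involutivity, I would invoke the standard R-matrix argument: for any matrix function $M(\lambda)$ obeying \eqref{sklya} with an R-matrix satisfying the classical Yang-Baxter equation, the functions $\Tr M(\lambda)^j$ (equivalently, the coefficients of $\det(\mu I-M(\lambda))$ in $\mu$) Poisson-commute pairwise as functions of $\lambda$ and $\mu$; this follows by taking the trace $\Tr\otimes\Tr$ of both sides of \eqref{sklya}, killing the commutator. Pulling these back through the Poisson maps established above yields an involutive family of functions on $\FF_{\hcan}$.

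The main obstacle is the assertion that the spectral invariants descend to $\FF_{\hcan}$, i.e.\ are independent of the auxiliary choices (cut $\gamma$ and path reversals) that distinguish $\can$ from the torus data alone. The key point is that $\Tr M_\can(\lambda)^j$ is a weighted sum over closed paths on $\hcan$ that wind $j$ times against $\gamma$, and each such winding path can be written as a product of elementary cycles: boundary cycles of faces and the two homology generators represented by the trails $P_\rho,P$. Hence the sum is intrinsic to $\hcan$ modulo gauge, depending only on face weights, on $z$, and on $z_\rho=\lambda$; cut position and the choice of path reversal drop out. Once this intrinsic description is in hand, the Poisson and involutivity assertions on $\FF_{\hcan}$ are immediate consequences of the corresponding cylinder statements of \cite{GSV4}.
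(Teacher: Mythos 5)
Your overall route is the same as the paper's: cut the torus along a suitable simple closed curve, use path reversals to arrange all sources on one boundary circle and all sinks on the other, and then quote Theorem~3.13 of \cite{GSV4} together with the standard R-matrix involutivity of spectral invariants. However, there is a genuine gap exactly at the step you dispose of in one sentence: ``after finitely many such reversals we obtain a perfect network $\can$ on the cylinder with sources on one circle and sinks on the other.'' This is the technical heart of the theorem (Proposition~\ref{sci} in the paper), and it is not at all automatic. First, a path reversal can only be applied to a \emph{directed} closed path, and an edge crossing your chosen curve $\gamma$ in the ``wrong'' direction need not lie on any directed cycle at all (its endpoints may lie in different strongly connected components), so you cannot simply reverse the offending edges one by one. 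Second, reversing a cycle typically flips \emph{many} crossings at once and can create new wrong-way crossings, so one needs a monotone quantity to guarantee termination. Third --- and most importantly --- even after all useful reversals have been performed, the originally chosen curve $\gamma$ is in general still \emph{not} ideal; the paper does not make $\gamma$ ideal but instead constructs a \emph{different} rim. Its argument passes to the universal cover, uses a pigeonhole argument (Lemma~\ref{unbounded}) to manufacture noncontractible directed cycles whose reversal strictly increases the number of ``right'' crossings, and, once no path from a boundary vertex reaches the line $l_{m+1}$, builds the ideal rim as $\pi(\partial A_{+\eps})$, the pushed-in boundary of the region $A$ cut out by all such paths. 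None of this is present in, or implied by, your sketch, so as written the construction of $\can$ is unjustified.

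A secondary, lesser issue: your final paragraph correctly identifies that one must show the spectral invariants descend to $\FF_{\hcan}$ (independence of the cut, the reversals, and the arbitrary splitting of weights of cut edges), and your proposed mechanism --- expanding $\Tr M_\can(\lambda)^j$ over closed cycles on the torus and expressing these through face weights and the two trail weights --- is essentially how the paper handles it, via the explicit partition-function formula of Proposition~\ref{talaska}. That part of your argument is plausible but would need to be carried out carefully (sign conventions and the treatment of non-simple cycles are delicate); the real missing ingredient remains the existence of an ideal rim.
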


\begin{proof} Consider a closed simple noncontractible oriented loop $\gamma$ on the torus; we call it a {\it rim\/} if it does not pass through vertices of $\hcan$ and its intersection index with the cut $\rho$ equals $\pm1$.
To avoid unnecessary technicalities, we assume that $\gamma$ and all edges of $\hcan$ are smooth curves. Besides, we assume that each edge intersects $\gamma$ in a finite number of points 
and that all the intersections are transversal. Each intersection point defines an orientation of the torus via taking  
the tangent vectors to the edge and to the rim at this point and demanding that they form a right basis of the tangent plane.
We say that the rim is {\it ideal\/} if its intersection points with all edges define the same orientation of the torus.

\begin{proposition} \label{sci}
Let $\hcan$ be a perfect network on the torus, 
then there exists a rim 
which becomes ideal after a finite number of path reversals in $\hcan$.
\end{proposition}

\begin{proof} Consider  the universal covering $\pi$ of the torus by a plane. 
Take an arbitrary rim $\gamma$.
The preimage  $\pi^{-1}(\gamma)$ is a disjoint union
of simple curves in the plane, each one isotopic to a line. Fix arbitrarily one such curve $l_0$; it divides the plane into two regions $L$ and $R$ lying to the left and to the right of the curve, respectively. Let $l_i$, $i\in \N$, be the connected components of $\pi^{-1}(\gamma)$ lying in $R$: $l_1$ is the first one to the right of $l_0$, $l_2$ is the next one, etc. 

Let $\hcan_R$ be the part of the network covering $\hcan$ that belongs to $R$. Each intersection point of an edge of $\hcan$ with $\gamma$ gives rise to a countable number of boundary vertices
of $\hcan_R$ lying on $l_0$. Denote by $m$ the number of intersection points of $\gamma$ with the edges of $\hcan$.
We will need the following auxiliary statement.

\begin{lemma}\label{unbounded}
Let $P$ be a possibly infinite oriented simple path in $\hcan_R$ that ends at a boundary vertex and intersects
$l_{m+1}$. Then there exist $i,j$ such that $m+1\ge i>j\ge0$ and points $t_i\in l_i$ and $t_j\in l_j$ on $P$ such that $t_i$ precedes $t_j$ on $P$ and $\pi(t_i)=\pi(t_j)$.
\end{lemma}

\begin{proof}
 Let us traverse $P$ backwards starting from its endpoint, and let $t_{m+1}$ be the first point on $l_{m+1}$ that is encountered during this process. Further, let $t_i$ for $0\le i\le m$ be the first point on $l_i$ that is encountered while traversing $P$ forward from $t_{m+1}$; in particular, $t_0$ is the endpoint of $P$. The proof now follows from the pigeonhole principle applied to the nested intervals of $P$ between the points $t_{m+1}$ and $t_i$.

\end{proof}

Assume that $\hcan_R$ contains a path $P$ as in Lemma~\ref{unbounded}. Consider the interval $P_{ij}$ of $P$ between the points $t_i$ and $t_j$ described in the lemma. Clearly $\pi(P_{ij})$ is a closed noncontractible path on the torus. If $\pi(P_{ij})$ is a simple path, its reversal increases by one the number of intersection points on $\gamma$ that define a right basis. If $\pi(P_{ij})$ is not simple and $s$ is a point of selfintersection, it can be decomposed into a path from $\pi(t_i)$ to $s$, a loop through $s$, and a path from $s$ to $\pi(t_j)$. Further, the loop can be erased, and the remaining two parts glued together, which results in a closed path on the torus with a smaller number of selfintersection points. After a finite number of such steps we arrive at a simple closed path on the torus that can be reversed.

Proceeding in this way, we get a network $\hcan'$ on the torus equivalent to $\hcan$ such that any path in
$\hcan'_R$ that ends at a boundary vertex does not intersect $l_{m+1}$. Note that a path like that may still be infinite. Each such path divides $R$ into two regions: one of them contains $l_{m+1}$, while the other one is
disjoint from it. Let $A$ be the intersection of the regions containing $l_{m+1}$ over all paths $P$ in $\hcan'_R$, and let $\partial A$ be its boundary. Clearly, $\partial A$ is invariant under the translations that commute with $\pi$ and take each $l_i$ into itself. Therefore, $\pi(\partial A)$ is a simple loop on the torus, and it is homologous to $\gamma$;
it is not a rim yet since it contains edges and vertices of $\hcan'$. 

Each vertex $v$ lying on
$\pi(\partial A)$ has three incident edges. Two of them lie on $\pi(\partial A)$ as well. 
Since a preimage $t$ of $v$ belongs to $\partial A$, the preimages of these two edges incident to $t$ belong
to paths that end at $l_0$. Therefore, if the third edge incident to $v$ is pointed towards $v$, its preimage incident to $t$ should belong to the complement of $A$, by the definition of $A$. 

Now, to build a rim, we take a tubular $\eps$-neighborhood of $\partial A$, and consider the boundary 
$\partial A_{+\eps}$ 
of this tubular neighborhood that lies inside $A$. For $\eps$ small enough, the above property of the vertices 
lying on $\pi(\partial A)$ guarantees that the rim $\pi(\partial A_{+\eps})$ intersects only those edges 
that  point from these vertices into $A$, and hence each intersection point defines a right basis. 
Therefore $\pi(\partial A_{+\eps})$ is an ideal rim. 
\end{proof}

Returning to the proof of the theorem, we 
apply Proposition~\ref{sci}
to find the corresponding ideal rim on the torus. Let $\can$ be the network obtained from $\hcan'$ after we cut the torus along this rim. Note that each edge of $\hcan'$ that intersects the rim yields several (two or more) edges in $\can$; the weights of these edges are chosen arbitrarily subject to the condition that their product equals the weight of the initial edge. By Proposition~\ref{sci}, all sources of $\hcan'$ belong to one of its boundary circles, while all sinks belong to the other boundary circle. Besides, $\bar\can=\hcan'$, and hence $\hcan$ and $\bar\can$ are equivalent. Clearly, one can choose a new cut $\rho'$ on the torus isotopic to $\rho$ such that it intersects the rim only once. Consequently, after the torus is cut into a cylinder, $\rho'$ becomes
a cut on the cylinder.

The rest of the proof relies on two facts. One is Theorem~3.13 of 
\cite{GSV4}: {\em for any network  on a cylinder with the equal number of sources and sinks belonging to 
different components of the  boundary, the standard Poisson structure on the space of edge weights induces the trigonometric R-matrix bracket on the space of boundary measurement matrices}. The second is
a well-known statement in the theory of integrable systems: {\em spectral invariants of $M_{\can}(\lambda)$ are in involution with respect to the Sklyanin bracket\/}, see Theorem~12.24 in \cite{OPRS}.  
\end{proof}

We can now apply Theorem~\ref{cyltotor} to the network $\bar\can_{k,n}$. Clearly, one can choose the rim $\gamma$
in such a way that the resulting network on a cylinder will be $\can_{k,n}$. Note that in this case no
path reversals are needed. For example, for the network $\bar\can_{3,5}$, $\gamma$ can be represented by a closed curve slightly to the left of the edge marked $x_1$ and transversal to all horizontal edges.
The resulting network $\can_{3,5}$ can be seen
in Fig.~\ref{network}, provided we refrain from gluing
together edges marked with the same symbols and regard that figure as representing a cylinder rather than a 
torus. Furthermore, this network on a cylinder is a concatenation of $n$  {\em elementary networks\/} of the same form shown on Fig.~\ref{Lax} (for the cases $k=2$ and $k=3$).

\begin{figure}[hbtp]
\centering
\includegraphics[height=0.9in]{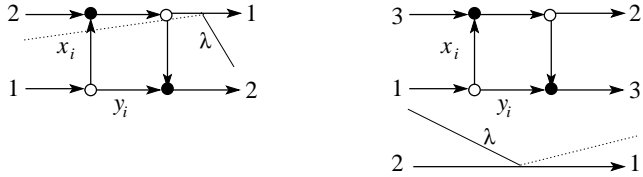}
\caption{Elementary networks}
\label{Lax}
\end{figure}

Since  elementary networks are acyclic, the corresponding  boundary measurement matrices are
$$
L_i(\lambda)=\left(\begin{array}{cc}
-\lambda x_i&x_i+y_i\\
-\lambda &1\\
\end{array}\right)
$$
for $k=2$ and
\begin{equation} \label{Laxmat}
L_i(\lambda)=\left(\begin{array}{cccccc}
0&0&0&\dots&x_i&x_i+y_i\\
-\lambda&0&0&\dots&0&0\\
0&1&0&\dots&0&0\\
0&0&1&\dots&0&0\\
\dots&\dots&\dots&\dots&\dots&\dots\\
0&0&0&\dots&1&1\\
\end{array}\right)
\end{equation}
for $k\ge 3$ (negative signs are implied by the sign conventions mentioned in Section~\ref{xyint}). Consequently, the boundary measurement matrix that corresponds to $\mathcal{N}_{k,n}$ is
\begin{equation}\label{Mmatrix}
M_{k,n}(\lambda)=L_1(\lambda) \cdots L_n(\lambda).
\end{equation}

In our construction above, the cut $\rho$ and the rim $\gamma$ are represented by non-contractible closed curves from two distinct 
homology classes; to get the network $\can_{k,n}$ on the cylinder we start from the network $\bar\can_{k,n}$ on the torus and cut
it along $\gamma$ so that $\rho$ becomes a cut in  $\can_{k,n}$. 
 One can interchange the roles of $\rho$ and $\gamma$ and to cut the torus along $\rho$, making $\gamma$ a cut. This gives another
perfect  network $\can'_{k,n}$ on the cylinder with $n$ sources and $n$ sinks belonging to different components of the boundary. 
To this end, we first observe that $\rho$ intersects all $\circ \to \circ$ edges and no other edges of $\bar\can_{k,n}$ 
(see Fig.~\ref{Lax}). We label the resulting intersection points along $\rho$ by numbers from $1$ to $n$ in such a way that the point
seen on Fig.~\ref{Lax} is labeled by $i$ (for $k\ge3$ this point belongs to the edge that connects the source~$2$ with the sink~$1$).
Next, we cut the torus along $\rho$. Each of the newly labeled intersection points gives rise to one source and one sink in 
$\can'_{k,n}$. The rim $\gamma$ becomes the cut $\rho'$ for $\can'_{k,n}$. It is convenient to view 
$\can'_{k,n}$  as a network in an annulus with sources on the outer boundary circle and sinks on the inner boundary circle. 
The cut $\rho'$ starts at the segment between sinks $n$ and $1$ on the inner circle and ends on the corresponding 
segment on the outer circle. It is convenient to assume that in between it
crosses 
$k-1$ edges incident to the  inner boundary, followed by a single $\bullet \to \bullet$ edge (see Fig.~\ref{conjugate}). 
The variable associated with the cut in $\can'_{k,n}$ will be denoted by $z$. We will say that $\can'_{k,n}$ is {\it conjugate\/} to $\can_{k,n}$.

\begin{figure}[hbtp]
\centering
\includegraphics[height=3.3in]{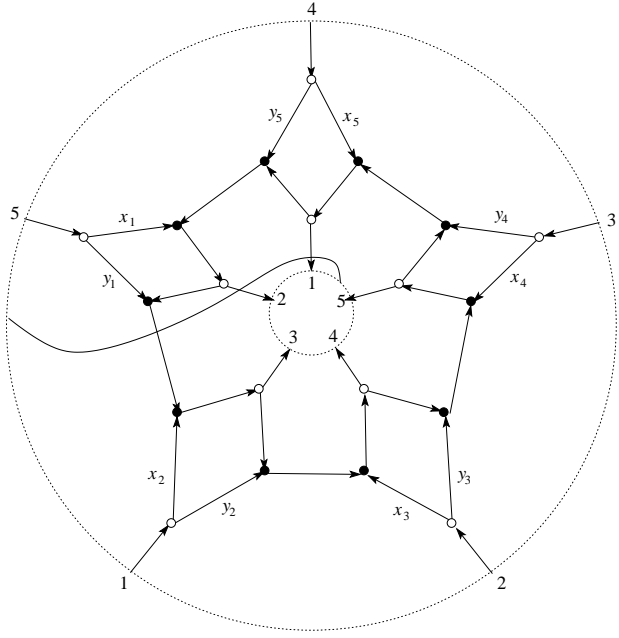}
\caption{The conjugate network $\can'_{3,5}$}
\label{conjugate}
\end{figure}

\begin{proposition} 
\label{conj_bm}
Let $D_x = \diag(x_1,\ldots,x_n)$, $D_y = \diag(y_1,\ldots, y_n)$, and 
$Z = -z e_{n 1} + \sum_{i=1}^{n-1} e_{i,i+1}$. The boundary measurement matrix $A_{k,n}(z)=(a_{ij}(z))_{i,j=1}^n$ 
for the network $\can'_{k,n}$ is given by
\begin{equation}\label{dual_bm}
A_{k,n}(z) = Z\left ( D_x + D_y Z\right ) Z^{k-2} \left ( \one_n - Z\right )^{-1}.
\end{equation}
\end{proposition}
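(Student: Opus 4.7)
The plan is to compute the boundary measurement matrix $A_{k,n}(z)$ by direct enumeration of weighted paths in $\can'_{k,n}$ and identify the resulting generating function with the claimed matrix product. The key observation is that the matrix $Z$ acts as the transfer operator for a single cyclic shift around the annulus: the twist $-z$ at the $(n,1)$ entry of $Z$ encodes both the $z$-weight attached to crossings of the cut $\rho'$ and the sign prescribed by the rotation-number convention of Section~\ref{xyint}, because $\rho'$ crosses exactly one $\bullet\to\bullet$ monochrome edge (the ``seam'' between sinks $n$ and $1$) together with $k-1$ edges incident to the inner boundary.

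First, I would describe the combinatorial structure of paths from a source $i$ to a sink $j$. Using the $n$-fold rotational symmetry of the network and lifting to the universal cover (a periodic infinite strip), every such path decomposes uniquely into four consecutive segments: (a) an initial segment from the source through the unique adjacent $\bullet\to\bullet$ monochrome edge, which is responsible for the leftmost factor $Z$; (b) a portion that winds around the outer part of the annulus some nonnegative number of times before committing to move inward, where summation over the winding number produces $(\one_n - Z)^{-1} = \sum_{w\ge 0} Z^{w}$; (c) a binary choice at a quadrilateral face to exit either via the $x$-edge (weight $x_i$, no shift) or via the $y$-edge (weight $y_i$ followed by one additional shift), giving the factor $D_x + D_y Z$; and (d) a final traversal of the $k-2$ inner-boundary edges that separate the bifurcation face from the chosen sink, contributing $Z^{k-2}$.

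Organizing the path weights along this decomposition and matching the traversal order with matrix composition produces exactly $Z \cdot (D_x + D_y Z) \cdot Z^{k-2} \cdot (\one_n - Z)^{-1}$. The main obstacle will be careful sign bookkeeping: the rotation number of every oriented loop formed by a path together with the cut and the boundary arcs must be evaluated consistently, and one must verify that these signs precisely reproduce the single $-z$ entry of $Z$ without introducing spurious signs in the other positions of the product matrix. Once this sign accounting is settled, the proof reduces to a routine comparison of generating functions with the entries of the claimed matrix product, using only that the underlying graph is acyclic within each fundamental domain so that the only source of an infinite sum is the winding factor.
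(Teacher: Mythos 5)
Your overall strategy — enumerate weighted paths, treat $Z$ as the one-step shift operator whose single $-z$ entry records both the crossing of the cut and the sign convention, and sum a geometric series over winding — is essentially the approach the paper takes. However, your decomposition is ordered incorrectly, and this is not a cosmetic issue. You place the winding segment (b) \emph{before} the $x/y$ bifurcation (c), so composing your four segments in traversal order yields $Z\,(\one_n-Z)^{-1}(D_x+D_yZ)\,Z^{k-2}$, not the asserted $Z(D_x+D_yZ)Z^{k-2}(\one_n-Z)^{-1}$; these differ because $D_x+D_yZ$ does not commute with $Z$ (indeed $ZD_xZ^{-1}=\diag(x_2,\ldots,x_n,x_1)\ne D_x$). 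Your final sentence claiming that the traversal order "produces exactly" \eqref{dual_bm} therefore contradicts your own segment ordering. The actual structure of $\can'_{k,n}$ is the reverse of what you describe: from source $i$ the path immediately bifurcates through the edge of weight $x_{i+1}$ or $y_{i+1}$ — which is precisely why these weights are indexed by the source alone and not by the sink — then traverses the mandatory $k-2$ further shifts, and only \emph{then} travels a variable distance along the unique oriented cycle to reach sink $j$, possibly making extra full revolutions. That variable tail is what produces $(\one_n-Z)^{-1}$ as the \emph{rightmost} factor.

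A second, smaller gap: you defer the sign bookkeeping entirely. The point that needs to be made explicit is that appending one copy of the unique oriented cycle (of weight $z$) reverses the sign of a path, so the loop contributions sum to $1-z+z^2-\cdots=\frac{1}{1+z}$ rather than $\frac{1}{1-z}$; this, combined with the fact that a simple path crossing the cut $d$ times contributes $(-z)^d$, is exactly what is packaged by $Z^n=-z\,\one_n$ and the identity $(\one_n-Z)^{-1}=\frac{1}{1+z}(\one_n+\cdots+Z^{n-1})$. The paper's proof makes all of this concrete by listing the two simple paths for each pair $(i,j)$, computing their cut-intersection numbers as a function of $j-i$ case by case, and then recognizing the resulting matrix as $\frac{1}{1+z}\bigl(\diag(x_2,\ldots,x_n,x_1)+\diag(y_2,\ldots,y_n,y_1)Z\bigr)\bigl(Z^{k-1}+\cdots+Z^{n+k-2}\bigr)$ before conjugating the diagonal factors past $Z$. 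I would recommend you redo your step (b)/(c)/(d) ordering along these lines; once the winding factor is moved to the sink end, your argument becomes correct and coincides with the paper's.
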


\begin{proof} For any source $i$ and sink $j$ there are exactly two simple (non-self-intersecting) 
directed paths in $\can'_{k,n}$ directed 
from $i$ to $j$ : one contains the edge of weight $x_{i+1}$, and the other, the edge of weight $y_{i+1}$. Every such path has a 
subpath in common with the unique oriented cycle in $\can'_{k,n}$ that contains all edges of weight $1$ that are not incident to either of the boundary components. The weight of this cycle is $z$, which means that all boundary measurements $a_{ij}(z)$ acquire a common factor $1-z+z^2-\ldots=\frac {1} {1+z}$ (see the sign conventions in Section~\ref{xyint}). The simple directed path from $i$ to $j$ containing the
edge of weight $x_{i+1}$ intersects the cut once if $ k-n-1 \leq j-i < k-1 $, twice if $  j-i < k -n-1 $, 
and does not intersect the cut if $j-i \geq k-1$. The simple directed path from $i$ to $j$ containing the
edge of weight $y_{i+1}$ intersects the cut once if $ k-n \leq j-i < k $, twice if $  j-i < k -n $, 
and does not intersect the cut if $j-i \geq k$. All intersections are positive. Thus, by the sign conventions,
$$
(1+z)a_{ij}(z) = \left \{  
\begin{array}{cc} 
x_{i+1} +y_{i+1}   & \text{if} \  j-i > k-1,\\
x_{i+1} -z y_{i+1}   & \text{if} \  j-i = k-1,\\
-z (x_{i+1} + y_{i+1}) & \text{if} \ k-n-1 <  j-i < k-1,\\
-z(x_{i+1} - z y_{i+1})   & \text{if} \  j-i = k-n-1,\\
z^2(x_{i+1} + y_{i+1})  & \text{if} \  j-i < k-n-1.
\end{array}
 \right.
$$
or, equivalently, 
\begin{multline*}
A_{k,n}(z) =\frac {1} {1+z}\\ \times\left ( \diag(x_2,\ldots, x_n, x_1) + \diag(y_2,\ldots, y_n, y_1) Z \right  )  
\left ( Z^{k-1} + \ldots + Z^{n+k-2} \right ).
\end{multline*}
Here we used the relation $Z^n = -z \one_n$. The claim now follows from the identities 
\begin{equation}\label{firstid}
Z\diag(d_1,\ldots, d_n) Z^{-1} = \diag(d_2,\ldots, d_n, d_1)
\end{equation}
and 
\begin{equation}\label{secid}
\left ( \one_n - Z\right )^{-1} =  \frac {1} {1+z} \left ( \one_n + \ldots + Z^{n-1} \right ).
\end{equation}
\end{proof}

\subsection{Poisson structure}
Let $\mathcal M_{k,n}$ and $\mathcal A_{k,n}$ be images of the boundary measurement maps from $\EE_{\can_{k,n}}$ 
and $\EE_{\can'_{k,n}}$ respectively.
Theorem~\ref{cyltotor} implies that
spectral invariants of elements of $\mathcal M_{k,n}$ and $\mathcal A_{k,n}$ viewed as functions on $\FF_{\bar\can_{k,n}}$ are 
in involution with respect to the standard Poisson structure.
However, quantities $x_n, y_n$, and therefore the spectral invariants of $M_{k,n}(\lambda)$ and $A_{k,n}(z)$
are only defined as functions on a subset
$\FF^1_{\bar\can_{k,n}}$ specified by the condition~\eqref{prodbc}.

\begin{proposition}\label{netbracket}
{\rm (i)} $\FF^1_{\bar\can_{k,n}}$ is a Poisson submanifold of $\FF_{\bar\can_{k,n}}$  with respect to 
the standard Poisson structure. For $n\geq 2k-1$, the restriction of the standard Poisson structure to
$\FF^1_{\bar\can_{k,n}}$  is given by
\begin{equation} \label{Poiss}
\begin{aligned}
\{x_i,x_{i+l}\}&=-x_ix_{i+l},\; 1\le l \le k-2; \quad& \{y_i,y_{i+l}\}&=-y_iy_{i+l},\; 1\le l\le k-1;\\
\{y_i,x_{i+l}\}&=-y_ix_{i+l},\; 1\le l\le k-1;\quad& \{y_i,x_{i-l}\}&=y_ix_{i-l},\; 0\le l\le k-2,
\end{aligned}
\end{equation}
where indices are understood $\mod n$ and only non-zero brackets are listed.

{\rm (ii)}  The bracket~\eqref{Poiss} has  rank $2(n-d)$, where  $d =\gcd(k-1, n)$.
Functions  
\begin{equation}
\label{casimirs}
\prod_{i=0}^{\frac{n}{d}-1} x_{s+i (k-1)},\qquad \prod_{i=0}^{\frac{n}{d}-1} y_{s+i (k-1)},\quad s=1,\ldots, d,
\end{equation}
are Casimir functions for~\eqref{Poiss}.

{\rm (iii)} The bracket \eqref{Poiss} is invariant under the map $T_k$.
\label{bracketxy}
\end{proposition}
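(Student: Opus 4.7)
The plan proceeds part by part, using the dictionary of Proposition~\ref{weights} to translate between the face- and trail-weight parametrization of $\FF_{\bar\can_{k,n}}$ and the coordinates $(\boxx,\boy)$, and then invoking the bracket formulas \eqref{facebracket}--\eqref{twotrailbracket} together with results already proved.

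For part~(i), I would start from the identities $p_i=y_i/x_i$, $q_i=x_{i+r+1}/y_{i+r}$ of Proposition~\ref{weights}(ii), together with the monomial expressions $x_i=x_1\prod_{j=1}^{i-1}p_jq_{j-r}$ and $y_i=x_ip_i$, which realize every $x_i$ and $y_i$ as the trail-type coordinate $x_1$ multiplied by a monomial in the face weights. The constraint $\prod_i b_ic_i=1$ defining $\FF^1_{\bar\can_{k,n}}$ becomes $\prod_i p_iq_i=1$ under this dictionary, and by Theorem~\ref{pqsyst}(ii) the latter is Casimir for $\{\cdot,\cdot\}_k$; combined with Corollary~\ref{twobrackets} this identifies $\FF^1_{\bar\can_{k,n}}$ as a Poisson submanifold. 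The bracket table~\eqref{Poiss} is then obtained by a direct computation: face-face brackets are read off from the quiver $\caq_{k,n}$ via Corollary~\ref{twobrackets}, brackets involving $x_1$ come from \eqref{trailbracket} applied to the directed path that represents $x_1$, and the Leibniz rule assembles the two. The hypothesis $n\ge 2k-1$ enters here precisely to guarantee that the windows of consecutive indices appearing in each $x_i$ or $y_i$ fit strictly inside one period, so that no cyclic wraparound produces extra contributions.

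Part~(ii) is then essentially formal given~\eqref{Poiss}. For the Casimir claim, I would pair an arbitrary generator $x_j$ or $y_j$ with $P^x_s=\prod_{i=0}^{n/d-1}x_{s+i(k-1)}$ using \eqref{Poiss}: because the nonzero window of bracket coefficients in \eqref{Poiss} has width $k-1$ and depends only on the index difference, the contributions from consecutive factors $x_{s+i(k-1)}$ and $x_{s+(i+1)(k-1)}$ cancel in pairs, giving a telescoping sum that vanishes modulo~$n$. The $y$-products are handled identically. For the rank, the matrix of \eqref{Poiss} is $2\times 2$ block-circulant in the cyclic index $i\in\Z/n\Z$, hence diagonalized by the discrete Fourier transform on $\Z/n\Z$; the $2\times 2$ Fourier block at character $\omega$ degenerates precisely when $\omega^{k-1}=1$, and there are $d=\gcd(k-1,n)$ such characters, yielding rank $2(n-d)$.

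For part~(iii), $T_k$-invariance is built into the construction of the map: the discussion preceding Proposition~\ref{Tkdef} realizes $T_k$ as a composition of Postnikov moves of types~1,~2,~3, followed by a gauge transformation and a relabeling of faces. Postnikov moves act on face weights as cluster $\tau$-mutations~\eqref{clust}, which by Theorem~4.5 of~\cite{GSV3} preserve a compatible Poisson bracket; gauge transformations and relabelings are manifestly Poisson. Hence each step is a Poisson map of $\FF^1_{\bar\can_{k,n}}$, so their composition $T_k$ preserves the bracket~\eqref{Poiss}. The main technical obstacle in the entire argument is the sign and index bookkeeping in part~(i), where face-face and trail-face contributions must assemble into the clean right-hand side of~\eqref{Poiss} with no leftover terms; this is exactly what the hypothesis $n\ge 2k-1$ is designed to ensure. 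Parts~(ii) and~(iii) reduce to short, essentially formal verifications once (i) is in hand.
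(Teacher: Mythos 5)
There is a genuine gap in your part (i), at the very first step. You assert that the constraint $\prod_i b_ic_i=1$ defining $\FF^1_{\bar\can_{k,n}}$ ``becomes $\prod_i p_iq_i=1$'' under the dictionary of Proposition~\ref{weights}, and you then invoke Theorem~\ref{pqsyst}(ii) to conclude that $\FF^1_{\bar\can_{k,n}}$ is a Poisson submanifold. But these are two different conditions: the identity $\prod_i p_iq_i=1$ holds on \emph{all} of $\FF_{\bar\can_{k,n}}$ (the product of all face weights of a network on a closed surface is $1$; it also follows immediately from~\eqref{pqviaxy}), whereas $\prod_i b_ic_i$ is the weight $z_\rho$ of the closed trail $P_\rho$, a coordinate on $\FF_{\bar\can_{k,n}}$ independent of the face weights. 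So $\FF^1_{\bar\can_{k,n}}$ is the level set $z_\rho=1$, and to show it is a Poisson submanifold one must verify that $z_\rho$ Poisson-commutes with the second trail weight $z$ and with all face weights. This is exactly the content of the paper's argument, carried out via the trail--face and trail--trail bracket formulas~\eqref{trailbracket} and~\eqref{twotrailbracket} together with the explicit geometry of $P_\rho$ and $P$ in Fig.~\ref{dual}; none of this appears in your proposal, and it cannot be bypassed by the cluster-structure statement about $\prod p_iq_i$. Relatedly, your subsequent computation of~\eqref{Poiss} by Leibniz from $x_i=x_1\prod p_jq_{j-r}$ implicitly uses $x_1=z$, which again holds only on the level set $z_\rho=1$, so the submanifold claim must come first.

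The remaining parts are essentially sound, with one caveat each. In (ii) your telescoping verification of the Casimirs is equivalent to the paper's observation that $\sum_i e_{s+i(k-1)}$ spans the kernel of $\one-C^{k-1}$, and your Fourier diagonalization of the block-circulant matrix $\Omega$ is a legitimate alternative to the paper's congruence $A\Omega A^T$; however, the crux of the rank count --- that the $2\times2$ Fourier block is \emph{nondegenerate} whenever $\omega^{k-1}\neq 1$ --- is asserted rather than checked, and that is precisely what the paper's explicit matrix $A$ delivers. In (iii) your argument that $T_k$ is a composition of Poisson maps (Postnikov moves, gauge action, relabeling) is more conceptual than the paper's ``direct calculation,'' but to make it complete you need the moves to be Poisson not only on face weights (where compatibility with $Y$-mutations applies) but also on the trail coordinate $z=x_1$, since the brackets $\{x_i,x_j\}$ in~\eqref{Poiss} are not determined by the face weights alone.
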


\begin{proof}
(i) As was explained in Section~\ref{xyint}, $\FF_{\bar\can_{k,n}}$ can be parameterized by face
coordinates $p_i, q_i$, $i=1,\ldots, n$, subject to $\prod_{i=1}^n p_i q_i=1$ and weights $z$, $z_\rho$ of two 
trails that we will choose as follows. The trail that corresponds to $z_\rho$ is a directed cycle $P_\rho$ that 
traces the $\bullet \to \circ \to \bullet$ part of the boundary
of each quadrilateral face $p_i$ and the immediately following $\bullet \to  \bullet$ edge of the corresponding 
octagonal face $q_{i+r'+1}$, see Fig.~\ref{dual}. After applying the gauge action to ensure that weights of all monochrome edges are equal to $1$, we see that the weight $z_\rho$ is equal to $\prod_{i=1}^n b_i c_i$, where we are using notations from Section~\ref{xydyn}. 
The weight $z$ corresponds to the directed cycle $P$ that consists of the $\circ \to \circ$ edge
separating octagonal faces $q_{n-r}$ and $q_{n+1-r}$ followed by the $\circ \to \bullet$ edge of the quadrilateral $p_1$ 
followed by the subpath of $P_\rho$ that closes the cycle. (For example, for $\bar\can_{3,5}$ depicted in 
Fig.~\ref{network}, $P_\rho$ contains the $\circ\to *\to\circ$ edge followed by the $\circ \to \bullet$ edge labeled by $x_1$.)
Since $\FF^1_{\bar\can_{k,n}}$ is cut out from $\FF_{\bar\can_{k,n}}$ by condition~\eqref{prodbc}
(or, equivalently, $z_\rho=1$), to see that  $\FF^1_{\bar\can_{k,n}}$ is a Poisson submanifold of $\FF_{\bar\can_{k,n}}$, we need to 
check that Poisson brackets of $z_\rho$ with $z$ and all face weights with respect to the standard Poisson 
structure are zero. For the bracket $\{z_\rho,z\}$ this claim follows from~\eqref{twotrailbracket}: there is only one maximal common subpath of $P$ and $P_\rho$, and the relative position of the paths is as on Fig.~\ref{defbrack}a). For the bracket $\{z_\rho,y_f\}$ the claim follows from~\eqref{trailbracket}. If $f$ is
a quadrilateral face then there is only one path $P'$ in the outer sum; it consists of two edges, and the corresponding edges of the directed dual are opposite, see Fig.~\ref{dual}. If $f$ is an octagonal face then there
are two paths $P'$ in the outer sum. One of them consists of three edges, of which one is monochrome; the edges 
of the directed dual corresponding to the remaining two edges of $P'$ are opposite, see Fig.~\ref{dual}. The second one consists of a unique monochrome edge. 

Next, $\FF^1_{\bar\can_{k,n}}$ can be parameterized by either $x_i, y_i$, $i=1,\ldots, n$, or by
 $p_i, q_i$, $i=1,\ldots, n$, $\prod_{i=1}^n p_i q_i=1$,  and $z$. To finish the proof of statement (i), it suffices to show that brackets~\eqref{Poiss} generate the same Poisson relations among $p_i, q_i, z$ as the standard Poisson structure on $\FF_{\bar\can_{k,n}}$. Recall that by Corollary~\ref{twobrackets}, 
 the Poisson brackets between $p_i, q_i$ in the standard Poisson structure coincide with those given by 
 $\{\cdot ,\cdot \}_k$. 
 Furthermore, it follows from~\eqref{trailbracket} that $\{ z, q_i\} = 0$ and 
 $\{ z, p_j\} = \left ( \delta_{1,j} - \delta_{n-k+2, j}\right ) z p_j $.
  Note that due to~\eqref{prodbc} and gauge-invariance of weights of directed cycles, $z=x_1$ on $\FF^1_{\bar\can_{k,n}}$. This, together with the periodicity of $\bar\can_{k,n}$, leads to Poisson brackets  
  $\{ x_i, p_j\} = \left ( \delta_{i,j} - \delta_{i-k+1, j}\right ) x_i p_j$.

For an $n$-tuple $(u_1,....u_n)$, let  $\bar {\bf u}$ be the column vector $(\log u_i)_{i=1}^n$. For two $n$-tuples $(u_1,....u_n)$, $(v_1,....v_n)$ of functions on a Poisson manifold, we use a shorthand notation 
$\{ \bar {\bf u}, \bar {\bf v}^T\}$ to denote a matrix of Poisson brackets 
$\left ( \{\log u_i, \log v_j \}\right )_{i,j=1}^n$. 
Note that $\{ \bar {\bf v}, \bar {\bf u}^T\}=-\{ \bar {\bf u}, \bar {\bf v}^T\}^T$.

 We can then describe the Poisson brackets $\{p_i, q_j\}$, $\{p_i, p_j\}$, $\{q_i, q_j\}$, $\{x_i, p_j\}$  by
\begin{equation}\label{bar_pq}
\begin{split}
\{ \bar {\bf p}, \bar {\bf q}^T\} = C^{-r-1} + C^{ r'+1} - C^{-r} - C^{ r'},\\ 
\{ \bar {\bf p}, \bar {\bf p}^T\}=\{ \bar {\bf q},\bar {\bf q}^T\} = \{ \bar {\bf x}, \bar {\bf q}^T\} =0,\quad
 \{ \bar {\bf x}, \bar {\bf p}^T\} = \one - C^{1-k},
\end{split}
\end{equation}
where $C= e_{12} + \cdots + e_{n-1 n} + e_{n 1} = S + e_{n 1}$ is an $n\times n$ cyclic shift matrix 
and $S$ is an upper triangular shift matrix. Similarly, formulas in~\eqref{Poiss}  are equivalent, 
provided $n\ge 2k-1$, to 
\begin{align}
\nonumber
\Omega_x&:=
\{ \bar {\bf x}, \bar {\bf x}^T\} = \sum_{i=1}^{k-2} \left ( C^{-i} - C^{i}\right ) = (\one - C^{k-1})\sum_{i=1}^{k-2}  C^{-i},\\
\label{omegas}
\Omega_y&:=
\{ \bar {\bf y}, \bar {\bf y}^T\} = \sum_{i=1}^{k-1} \left ( C^{-i} - C^{i}\right )=(\one - C^{k-1})\sum_{i=0}^{k-1}  C^{-i},\\
\nonumber
\quad \Omega_{yx}&:=
\{ \bar {\bf y}, \bar {\bf x}^T\} = \sum_{i=1}^{k-1} \left ( C^{1-i} - C^{i}\right )=(\one - C^{k-1})\sum_{i=0}^{k-2}  C^{-i}.
\end{align}
We need to check that relations~\eqref{omegas} imply~\eqref{bar_pq}. 
This follows via a straightforward calculation
from relations $\bar {\bf p} = \bar {\bf y} - \bar {\bf x}$, $\bar {\bf q} = C^r \left ( C \bar {\bf x} - \bar {\bf y}\right )$ induced by~\eqref{pqviaxy} (one also needs to take into account equalities $r+r'=k-2$ and $C^T=C^{-1}$.)

(ii) The rank of the Poisson bracket~\eqref{Poiss} is equal to the rank of the matrix
$$
\Omega=\left (\begin{array}{cc} \Omega_x & -\Omega_{yx}^T\\   
\Omega_{yx} & \Omega_{y}\end{array}\right ).
$$ 
The claim that functions~\eqref{casimirs} are Casimir functions follows from~\eqref{omegas} and the
fact that vectors $\sum_{i=0}^{\frac{n}{d}-1} e_{s+i (k-1)}$, $s=1,\ldots, d$, form a basis of the kernel
of $\one - C^{k-1}$. 
Let $V$ be the complement to that kernel in ${\R}^n$ spanned by vectors $(v_i)_{i=1}^n$ such that  $\sum_{i=0}^{\frac{n}{d}-1} v_{s+i (k-1)}=0$ for $s=1,\ldots, d$.
Then $V$ is invariant under $C$, $\one - C^{k-1}$ is invertible on  $V$ and the rank of $\Omega$ is equal to the rank of its restriction to  $V \oplus V$. On $V \oplus V$, we define 
$$
A=\left (\begin{array}{cc} C (C-\one)^{-1}  & - (C-\one)^{-1} \\   
-\one & \one \end{array}\right )
$$ 
and compute 
$$ 
A \Omega A^T = \left (\begin{array}{cc} 0 & C^{1-k} - \one  \\   
\one - C^{k-1} & 0  \end{array}\right ).
$$
Since $ A \Omega A^T$ is invertible on $V\oplus V$, we conclude that the rank of~\eqref{Poiss} is
$ 2 (n- d)$.

(iii) Invariance of \eqref{Poiss} under the map $T_k$ can be verified by a direct calculation.
\end{proof}

\begin{remark}
There are formulae similar to~\eqref{Poiss} for  $T_k$-invariant Poisson bracket in the case
 $n< 2k-1$ as well. Our focus on the ``stable range'' $n\geq 2k-1$ will be justified by the geometric interpretation of the maps $T_k$ in Section~\ref{geom}.
\end{remark}

\subsection{Conserved quantities}
 The ring of spectral invariants of $M_{k,n}(\lambda)$ is generated by coefficients of its characteristic polynomial
\begin{equation} 
\det(I_n+zM_{k,n}(\lambda)) = \sum_{i=1}^n \sum_{j=1}^k I_{ij}(x,y)\lambda^i z^j.
\label{integrals}
\end{equation}
(Some of the coefficients $I_{ij}$ are identically zero.)

\begin{proposition}
\label{invariants}
Functions $I_{ij}(x,y)$ are invariant under the map $T_k$.
\end{proposition}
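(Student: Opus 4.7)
The approach is to establish what amounts to a Lax-type statement: $M_{k,n}(\lambda)$ transforms under $T_k$ by conjugation, from which invariance of every spectral invariant, and in particular every $I_{ij}$, follows immediately. The point is that $T_k$ was defined in Section~\ref{xydyn} not as an abstract formula but as the outcome of a concrete sequence of weighted-network moves, each of which is known to preserve boundary measurements; so by construction the corresponding boundary measurement matrix is preserved as well.

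Concretely, the plan is as follows. First, I recall that $T_k$ equals the composition of (a) a type~3 Postnikov move at each $p$-face, (b) type~1 and type~2 Postnikov moves at every white-white and every black-black edge, and (c) gauge transformations at internal vertices that normalize the weights to the canonical form of Fig.~\ref{network}. I apply exactly the same sequence of moves, in the same order, to the cylindric network $\can_{k,n}$ rather than to its torus quotient $\bar\can_{k,n}$. Since the $p$-faces are bounded and monochrome edges in $\can_{k,n}$ have no endpoint on the boundary, every move is performed at internal edges/vertices and is of a type for which Postnikov's theorem, as recalled in Section~\ref{xyint}, guarantees that the boundary measurement matrix is preserved exactly. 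Hence the $\lambda$-dependent matrix $M_{k,n}(\lambda)=L_1(\lambda)\cdots L_n(\lambda)$ is unchanged as a function from $\EE_\can$ to rational matrices.

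Next, I identify the resulting network with $\can_{k,n}$ equipped with the new weights $(x_i^*,y_i^*)$ produced by $T_k$. By Proposition~\ref{Tkdef}, this identification is canonical once the labeling of $p$-faces after step~(a) is fixed; any ambiguity in matching sources and sinks before and after is at worst a cyclic shift along the boundary circles (the same shift that, on the face level, realizes $\barT_k=\barC_k\circ\barD_k$ in Section~\ref{pq}). A cyclic shift of the $n$ factors $L_i(\lambda)$ conjugates the product $M_{k,n}(\lambda)$ by a fixed matrix of the form $L_1(\lambda)\cdots L_s(\lambda)$, which preserves the characteristic polynomial. Putting these observations together, I conclude
\[
\det\bigl(I_n+zM_{k,n}(\lambda)\bigr)\bigl|_{(x^*,y^*)}=\det\bigl(I_n+zM_{k,n}(\lambda)\bigr)\bigl|_{(x,y)},
\]
and hence each coefficient $I_{ij}(x,y)$ in the expansion~\eqref{integrals} is $T_k$-invariant.

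I expect the only genuine obstacle to be the careful bookkeeping of step~(b) on the cylinder: although the type~1 and~2 moves are purely local, several of them are performed at edges lying near the cut, and one must verify that, after all such moves, the resulting network is indeed $\can_{k,n}$ (with the correct source/sink labeling) rather than a nontrivially relabeled copy. Once this combinatorial check is in place, the argument reduces to an application of two standard facts: Postnikov invariance of boundary measurements, and conjugation-invariance of the spectral curve of $M_{k,n}(\lambda)$.
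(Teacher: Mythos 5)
Your proposal is correct and follows essentially the same route as the paper: type~3 Postnikov moves and gauge transformations leave the boundary measurement matrix untouched, while the type~1 and~2 moves that carry vertical edges past the rim amount to a refactorization $M=AB\mapsto BA$ (equivalently, a cyclic shift of the factors $L_i(\lambda)$), which preserves $\det(I+zM(\lambda))$. One caveat: your intermediate claim that \emph{every} move is internal to the cylinder and hence leaves $M_{k,n}(\lambda)$ literally unchanged is false --- the monochrome edges crossed by the rim become edges incident to boundary vertices of $\can_{k,n}$, and the moves at those edges genuinely change the boundary measurement matrix; so the cyclic-shift/refactorization observation in your next paragraph is not optional bookkeeping but the essential point of the argument, exactly as in the paper's own proof.
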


\begin{proof}
Recall that in Section~\ref{xydyn}, $T_k$ was described via a sequence of Postnikov's moves and gauge transformations. 
Furthermore,  $\mathcal{N}_{k,n}$ is obtained from  $\bar\can_{k,n}$
by cutting the torus into a cylinder along an ideal rim $\gamma$. 
Note that type 3 Postnikov's moves and gauge transformations do not affect the boundary measurement matrix. In fact, the only transformations that do change the boundary measurements are type 1 and 2 moves interchanging vertical edges lying on different sides of $\gamma$. For a network on a cylinder, moving a vertical edge past $\gamma$ from left to right is equivalent to cutting at the right end of the cylinder a thin cylindrical slice  containing this edge and no other vertical edges and then reattaching this slice to the cylinder on the left. In terms of boundary measurement matrices, this operation amounts to a matrix transformation of the form $ M=A B \mapsto \tilde M = B A$ under which non-zero eigenvalues of $M$ and $\tilde M$ coincide. This proves the claim.
\end{proof}

Next, we will provide a combinatorial interpretation of conserved quantities $I_{ij}$ in terms of the network $\bar\can_{k,n}$. This, in turn, will allow us to clarify
the relation between boundary measurements $M_{k,n}(\lambda)$ and $A_{k,n}(z)$ in the context of the map $T_k$.

Let $\can$ be a perfect network on the torus with the cut $\rho$, and let $\gamma$ be a rim.
For an arbitrary simple directed cycle $C$ in $\can$ we define its weight $w(C)$ as the product of the weights of the edges in $C$ times $(-1)^{d_\lambda+d_z}\lambda^{d_\lambda}z^{d_z}$, where $d_\lambda$ and $d_z$ are the intersection indices of $C$ 
with $\rho$ and $\gamma$, respectively. The weight of a collection $\cac$ of disjoint simple cycles is defined as
$w(\cac)=(-1)^{|\cac|}\prod_{C\in\cac}w(C)$. Finally, define the function 
$\frap_\can(\lambda,z)=\sum w(\cac)$, where the sum is taken over all collections of disjoint simple cycles.

\begin{proposition}\label{talaska}
Let $\can$ be a perfect network on the torus with no contractible cycles, $\gamma$ be an ideal rim, and $M(\lambda)$
be the $m\times m$ boundary measurement matrix for the network on the cylinder obtained by cutting the torus along $\gamma$.
Then
\begin{equation}\label{charviapar}
\det(I_m+zM(\lambda))=\frac {\frap_\can(\lambda,z)}{\frap_\can(\lambda,0)}.
\end{equation}
\end{proposition}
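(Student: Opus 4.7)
The plan is to express $\frap_\can(\lambda,z)$ and $\frap_\can(\lambda,0)$ as genuine determinants via the classical cycle-expansion formula, and then relate them through the boundary measurement matrix using the matrix determinant lemma. First I would recall that for any square matrix $X$,
$$\det(I-X)=\sum_{\cac}(-1)^{|\cac|}\prod_{C\in\cac}w_X(C),$$
where $\cac$ ranges over collections of disjoint simple directed cycles in the complete digraph weighted by $X$; this is a Leibniz-expansion identity that follows by combining the sign $(-1)^{k-1}$ of a $k$-cycle permutation with the $(-1)^k$ from the $k$-fold product of $-x_{ij}$'s.

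Second, I would set up the matrices. Let $V$ be the internal vertex set of $\can$ (unchanged by cutting). Let $A=A(\lambda)$ be the $V\times V$ weighted adjacency of the cylinder network obtained by cutting along $\gamma$, with the convention that each crossing of $\rho$ contributes a factor $-\lambda$ so that edge weights along any path match the sign rules from Section~\ref{xyint}. Let $S$ and $T$ be the source-to-internal and internal-to-sink blocks; since sources have no incoming edges and sinks no outgoing, $M(\lambda)=S(I-A)^{-1}T$. The central geometric observation is that the adjacency matrix of the torus network, with the full sign conventions $(-\lambda)^{d_\lambda}(-z)^{d_z}$ on edges, is exactly $A(\lambda)-zTS$: the gluing of each pair of cut cylinder-edges produces a torus-edge of weight equal to the product of the two corresponding $T$ and $S$ entries, and the hypothesis that $\gamma$ is an \emph{ideal} rim ensures every such edge carries the same sign factor $-z$.

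Third, applying the cycle-expansion formula twice yields
$$\det(I-A+zTS)=\frap_\can(\lambda,z),\qquad \det(I-A)=\frap_\can(\lambda,0),$$
where in the second identity we use that cycles on the cylinder are precisely the torus cycles with $d_z=0$ (no cycle can pass through a source or sink). The hypothesis of no contractible cycles guarantees that $\frap_\can(\lambda,0)$ has constant term $1$, so $(I-A)$ is invertible as a matrix over $\R(\lambda)$. Now the matrix determinant lemma combined with the Sylvester identity $\det(I+UV)=\det(I+VU)$ gives
$$\det(I-A+zTS)=\det(I-A)\det\bigl(I+z(I-A)^{-1}TS\bigr)=\det(I-A)\det\bigl(I+zS(I-A)^{-1}T\bigr)=\det(I-A)\det(I+zM(\lambda)),$$
and dividing recovers \eqref{charviapar}.

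The main obstacle I anticipate is the careful bookkeeping of signs: the paper's definition of $w(C)$ bundles together a factor $(-1)^{d_\lambda+d_z}$ with $\lambda^{d_\lambda}z^{d_z}$, the cycle expansion of $\det(I-X)$ contributes its own $(-1)^{|\cac|}$, and the identification of the torus adjacency as $A-zTS$ requires that \emph{every} crossing of $\gamma$ produces the sign $-z$ (not a mix of $\pm z$). This last point is precisely where ``ideal rim'' enters: it guarantees that the local orientation at every $\gamma$-edge crossing agrees, so the gluing can be encoded by the single scalar $-z$ in front of $TS$. Once these conventions are nailed down, the algebraic manipulation above is straightforward.
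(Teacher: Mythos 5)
Your argument is sound and takes a genuinely different route from the paper's. The paper expands $\det(I_m+zM(\lambda))$ into principal minors and evaluates each one by Talaska's determinant formula for weighted path matrices, then matches collections of disjoint source-to-sink paths on the cylinder with collections of disjoint simple cycles on the torus (the permutation sign $(-1)^{j-c}$ being absorbed into the cycle count). You instead encode the cutting and regluing in linear algebra: the transfer-matrix representation $M(\lambda)=S(I-A)^{-1}T$, the identification of the torus adjacency matrix with $A-zTS$ (which works precisely because every boundary vertex of the cut network has degree one, so each nonzero product $T_{uj}S_{jv}$ corresponds to a genuine glued edge), and the Sylvester identity $\det(I+UV)=\det(I+VU)$. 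This replaces the appeal to Talaska's theorem by the elementary Leibniz cycle expansion of $\det(I-X)$, which is arguably more self-contained; the paper's version, on the other hand, gets the refined statement that the coefficient of $z^j$ collects exactly the cycle collections meeting $\gamma$ in $j$ points, which is reused later for the Newton polygon. Two points in your write-up deserve more care. First, the identity $M(\lambda)=S(I-A)^{-1}T$ under the paper's rotation-number sign convention is exactly the bookkeeping the paper disposes of via the substitution $\lambda\mapsto-\lambda$: you should check that every cycle surviving on the cylinder is noncontractible with intersection index $\pm1$ with the cut $\rho$, so that your $(-\lambda)^{\pm1}$ convention supplies the extra factor $-1$ that the paper's rules attach to each appended simple cycle — this is where the hypothesis of no contractible cycles enters your proof as well. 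Second, an edge of the torus network may cross the ideal rim more than once; cutting then produces a direct source-to-sink edge, so $M$ is no longer $S(I-A)^{-1}T$, and a single torus edge carries a factor $(-z)^{\ge 2}$ not captured by the rank-one correction $-zTS$. This is harmless (subdivide such edges by two-valent vertices, which changes neither the boundary measurements nor $\frap_\can$), but it should be stated.
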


\begin{proof} First of all, note that
\begin{equation}\label{charpol}
\det(I_m+zM(\lambda))=1+\sum_{j=1}^m z^j\sum_{|J|=j}\Delta^J(M(\lambda)),
\end{equation}
where $\Delta^J(M(\lambda))$ is the principal $j\times j$ minor of $M(\lambda)$ with the row and column sets $J$. To evaluate
this minor we use the formula for determinants of weighted path matrices obtained in~\cite{Ta}. 

It is important to note that there are two distinctions between the definitions of the path weights here and in~\cite{Ta}.
First, there is no cut in~\cite{Ta}. This can be overcome by modifying edge weights: if an edge of weight $w$  intersects the cut, then its weight is changed to $\lambda w$ or $\lambda^{-1}w$, depending on the orientation of the intersection; see Chapter~9.1.1 in~\cite{GSV3} for details. Second, the
sign conventions in~\cite{Ta} are different from those described in Section~\ref{xyint}: the sign of any path is positive.
However, in the absence of contractible cycles our conventions can emulate conventions of~\cite{Ta}. To achieve that, it suffices
to apply the transformation $\lambda\mapsto -\lambda$. Indeed, after the torus is cut along $\gamma$, the only cycles in $\can$ that survive are those with $d_\lambda=\pm1$. By our sign conventions, such cycles contribute $-1$ to the sign of a path. The same result
is achieved if the contribution to the sign of a path is $1$, and the weight of the appropriate edge is multiplied (or divided) 
by $-\lambda$. Paths on the  cylinder that intersect the cut $\rho$ are treated in a similar way. Finally, the rim $\gamma$ is ideal,
 and hence the sources and the sinks lie on different boundary circles of the cylinder.
Therefore, 
\begin{equation}\label{postal}
\Delta^J(M(\lambda))=\Delta^{J,m+J}(\bar W(-\lambda)), 
\end{equation}
where $\bar W(-\lambda)$ is the path weight matrix built by the rules of~\cite{Ta} based on modified weights 
of the edges.

It follows from the main theorem of~\cite{Ta} that 
\begin{equation}\label{talexp}
\Delta^{J,m+J}(\bar W)=\frac{\sum_\caf \sgn(\caf) \bar w(\caf)}{\sum_\cac (-1)^{|\cac|} \bar w(\cac)}
\end{equation}
where $\caf$ runs over all collections of $j$ disjoint paths connecting sources from $J$ with the sinks from $m+J$, 
$\cac$ runs over all collections of disjoint cycles in the network on the cylinder, and $\sgn(\caf)$ is the sign of the permutation 
$\pi_\caf$ of size $j$ realized by the paths from the collection $\caf$. 
Equation~\eqref{talexp} takes into account that there are no contractible cycles in  $\can$, and hence any cycle 
that survives on the cylinder intersects any path between a source and a sink. Consequently, the denominator 
in~\eqref{talexp} equals $\frap_\can(\lambda,0)$.

To proceed with the numerator, assume that $\pi_\caf$ can be 
written as the product of $c$ cycles of lengths $l_1,\dots,l_c$ subject to $l_1+\cdots+l_c=j$. Then 
$\sgn(\pi_\caf)=(-1)^{l_1-1}\cdots(-1)^{l_c-1}=(-1)^{j-c}$. It is easy to see that on the torus, the paths from 
$\caf$ form exactly $c$ disjoint cycles, and that the intersection index of the $i$th cycle with $\gamma$ 
equals $l_i$. By~\eqref{charpol}--\eqref{talexp}, we can write
\begin{equation*}
\det(I_m+zM(\lambda))=\dfrac{\sum_{j=0}^m z^j\sum_{|J|=j}\sum_\cac (-1)^j(-1)^{|\cac|}\prod_{C\in\cac}
\bar w(C)}{\frap(\lambda,0)},
\end{equation*}
where the inner sum is taken over all collections $\cac$ that intersect $\gamma$ at the prescribed set $J$ of 
points. Clearly, the numerator of the above expression equals $\frap_\can(\lambda,z)$, and~\eqref{charviapar} follows. 
\end{proof}

\begin{corollary}\label{twocharpol}
One has
\begin{equation}\label{tcp}
\det(I_k+zM_{k,n}(\lambda))=(1+z)\det(I_n+\lambda A_{k,n}(z))=\frap_{\bar\can_{k,n}}(\lambda,z).
\end{equation}
\end{corollary}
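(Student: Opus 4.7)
The plan is to apply Proposition~\ref{talaska} twice, once with $\rho$ playing the role of the cut and $\gamma$ the role of the ideal rim, and once with their roles exchanged. In each case, the characteristic polynomial on the left equals $\frap_{\bar\can_{k,n}}(\lambda,z)$ divided by a factor arising from cycles that do not meet whichever curve currently serves as the rim. The Corollary will follow once we compute those two denominators.

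For the first equality, I would apply Proposition~\ref{talaska} directly to $\bar\can_{k,n}$: the cut is $\rho$, with associated variable $\lambda$, and the rim is $\gamma$, ideal by the text's description of $\gamma$ (it crosses only horizontal edges of $\bar\can_{k,n}$ and all crossings are coherently oriented). One also needs to check that $\bar\can_{k,n}$ has no contractible cycles. This follows since cutting along $\gamma$ produces the concatenation $\can_{k,n}$ of the elementary pieces in Fig.~\ref{Lax}, which are manifestly acyclic. The same argument shows $\frap_{\bar\can_{k,n}}(\lambda,0)$, being a sum over collections of disjoint cycles not meeting $\gamma$, reduces to the contribution of the empty collection and is equal to $1$. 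Proposition~\ref{talaska} then gives $\det(I_k+zM_{k,n}(\lambda))=\frap_{\bar\can_{k,n}}(\lambda,z)$.

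For the second equality, I would swap the roles of $\rho$ and $\gamma$. Now $\gamma$ is the cut (variable $z$) and $\rho$ is the rim (variable $\lambda$), and $\rho$ is itself ideal: it crosses only the $\circ\to\circ$ edges and, by the periodic structure of $\bar\can_{k,n}$, all those crossings give the same orientation (this is the reason the sign $-\lambda$ in~\eqref{Laxmat} is uniform). Relabeling the two variables in Proposition~\ref{talaska} accordingly yields
\begin{equation*}
\det(I_n+\lambda A_{k,n}(z))=\frac{\frap_{\bar\can_{k,n}}(\lambda,z)}{\frap_{\bar\can_{k,n}}(0,z)}.
\end{equation*}
It remains to compute $\frap_{\bar\can_{k,n}}(0,z)$, a sum over disjoint collections of simple cycles with no intersection with $\rho$. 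Such cycles are exactly the simple directed cycles that survive on the cylinder $\can'_{k,n}$. By (the proof of) Proposition~\ref{conj_bm}, $\can'_{k,n}$ contains a \emph{unique} simple directed cycle, consisting of the weight-$1$ edges not incident to the boundary together with a single crossing of $\gamma$. Thus only two collections contribute: the empty one (weight $1$) and the single-cycle one with weight $(-1)\cdot(-1)^{0+1}z=z$. Hence $\frap_{\bar\can_{k,n}}(0,z)=1+z$, which gives $(1+z)\det(I_n+\lambda A_{k,n}(z))=\frap_{\bar\can_{k,n}}(\lambda,z)$.

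The main obstacle is the two verifications needed to invoke Proposition~\ref{talaska} in the second application: that $\rho$ is genuinely an ideal rim (all $\circ\to\circ$ crossings coherent) and that the unique simple cycle on $\can'_{k,n}$ extracted from Proposition~\ref{conj_bm} is indeed the only cycle disjoint from $\rho$ on the torus—any additional cycle on $\bar\can_{k,n}$ must cross $\rho$, since cutting along $\rho$ leaves precisely one cycle. Granting these, the two equalities above combine to give~\eqref{tcp}.
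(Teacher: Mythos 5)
Your proposal is correct and follows essentially the same route as the paper: two applications of Proposition~\ref{talaska} with the roles of $\rho$ and $\gamma$ exchanged, with $\frap_{\bar\can_{k,n}}(\lambda,0)=1$ coming from acyclicity of $\can_{k,n}$ and $\frap_{\bar\can_{k,n}}(0,z)=1+z$ from the unique simple cycle of weight $z$ in $\can'_{k,n}$. The sign bookkeeping in your computation of the latter is also correct.
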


\begin{proof}
It is easy to see that the network $\bar\can_{k,n}$ does not have contractible cycles. Besides, both $\gamma$ 
and $\rho$ are ideal rims with respect to each other. Therefore, by Proposition~\ref{talaska}, 
\begin{equation*}
\det(I_k+zM_{k,n}(\lambda))=\frac{\frap_{\bar\can_{k,n}}(\lambda,z)}{\frap_{\bar\can_{k,n}}(\lambda,0)}, \qquad
\det(I_n+\lambda A_{k,n}(z))=\frac{\frap_{\bar\can_{k,n}}(\lambda,z)}{\frap_{\bar\can_{k,n}}(0,z)}.
\end{equation*}

Next, the network $\can_{k.n}$ is acyclic, and so $\frap_{\bar\can_{k,n}}(\lambda,0)=1$. 
Finally, the weight of the only simple cycle in the conjugate network $\can'_{k,n}$ equals $z$,
hence $\frap_{\bar\can_{k,n}}(0,z)=1+z$ and~\eqref{tcp} follows.
\end{proof}

\subsection{Lax representations}
Another way to see the invariance of $I_{ij}$ under $T_k$ is  based on a {\em zero curvature Lax representation\/} with a spectral parameter. A zero curvature representation for a nonlinear dynamical system is a compatibility condition for an over-determined system of linear equations; 
this is a powerful method of establishing algebraic-geometric complete integrability, see, e.g., \cite{DKN}. Even more generally, the term ``Lax representation'' is often used for discrete systems that can be described via a re-factorization of matrix rational functions $ A(z) = A_1(z) A_2(z) \mapsto A^*(z) = A_2(z) A_1(z)$, see, e.g., \cite{MV}. 

\begin{proposition}
\label{zero-curv}
The map $T_k$ has a $k\times k$ zero curvature representation 
$$
L_i^*(\lambda)= P_i(\lambda) L_{i+r-1}(\lambda) P_{i+1}^{-1}(\lambda)
$$
and an $n\times n$ Lax representation
$$
A_{k,n}(z) = A_1(z) A_2(z)\ \mapsto\ A^*_{k,n}(z) =  A_2(z) A_1(z).
$$
Here the Lax matrices $L_i(\lambda)$ and $A_{k,n}(z)$ are defined 
 by~\eqref{Laxmat} and~\eqref{dual_bm}, respectively, and  
 $L_i^*(\lambda)$  and $A^*_{k,n}(z)$ are their images under the transformation $T_k$. 
The  auxiliary  matrix $P_i(\lambda)$ is given by
$$
P_i(\lambda) = \left ( 
\begin{array} {cc}
-\frac {x_{i-1}}{\sigma_{i-1}} - \frac{1} {\lambda \sigma_{i}} & \frac{1}{\lambda}\\
-\frac{1} { \sigma_{i}} & 0
\end{array}
\right )
$$
for $k=2$ and 
$$
P_i(\lambda)=\left(\begin{array}{ccccccc}
0&-\frac{x_i}{\lambda \sigma_i}&-\frac{y_{i+1}}{\lambda \sigma_{i+1}}&0&\dots&0&0\\
0&0&\frac{x_{i+1}}{\sigma_{i+1}}&\frac{y_{i+2}}{\sigma_{i+2}}&\dots&0&0\\
\dots&\dots&\dots&\dots&\dots&\dots&\dots\\
0&0&0&\dots&\frac{x_{i+k-4}}{\sigma_{i+k-4}}&\frac{y_{i+k-3}}{\sigma_{i+k-3}}&0\\
-\frac{1}{\sigma_{i+k-2}}&0&0&\dots&0&\frac{x_{i+k-3}}{\sigma_{i+k-3}}&1\\
\frac{1}{\sigma_{i+k-2}}&\frac{1}{\lambda \sigma_{i+k-1}}&0&\dots&0&0&0\\
0&-\frac{1}{\lambda \sigma_{i+k-1}}&0&\dots&0&0&0\\
\end{array}\right),
$$
for $k\geq3$, where, as before, $\sigma_i=x_i+y_i$. Finally, $A_{j}(z)$ are given by
\begin{align}
\label{A12}
A_{1}(z) & = Z D_\sigma (\one_n - Z)^{-1}Z^{-r'},\\ 
\nonumber
A_{2}(z) & = Z^{r'+2} \left (D_x +  Z D_y\right ) D_\sigma^{-1} Z^{k-2},
\end{align}
where $D_\sigma = D_x + D_y = \diag(\sigma_1,\ldots, \sigma_n)$.
\end{proposition}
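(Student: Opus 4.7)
Both Lax representations are essentially verifications of explicit matrix identities, and the guiding principle in each case is the same: the map $T_k$ has already been realized (in Section~\ref{xydyn}) as a sequence of Postnikov moves and gauge transformations on $\bar\can_{k,n}$, and I will exploit the fact that these moves are designed not to change the boundary measurement matrix of any cylinder obtained by cutting the torus along an appropriate rim. The nontrivial effect on the boundary measurement comes only from vertical edges that cross the cut after the moves are applied, and this is precisely what produces the ``re-factorization'' structure.

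For the $k\times k$ zero curvature representation, the plan is to read off, step by step, what Postnikov's type 1, 2, and 3 moves do to each elementary slice used in the concatenation $M_{k,n}(\lambda)=L_1(\lambda)\cdots L_n(\lambda)$. The type~3 move at the face $p_i$ (conjugated by gauge to produce weights $\sigma_i=x_i+y_i$) together with the subsequent type 1 and 2 moves redistributes weights between consecutive blocks, and a gauge normalization restores the form~\eqref{Laxmat}. The matrix $P_i(\lambda)$ is precisely the gauge/boundary data that is transferred from slice $i-1$ to slice $i$ in this process, while the shift by $r-r'$ in $L_{i+r-1}$ accounts for the renaming of faces (the swap of $\bop$ and $\boq$) that takes place after the moves. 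Given the explicit formulas, the identity $L_i^*(\lambda)=P_i(\lambda)L_{i+r-1}(\lambda)P_{i+1}^{-1}(\lambda)$ is then a direct matrix multiplication in which the auxiliary cancellations $P_{i+1}^{-1}P_{i+1}=\one_k$ telescope across the product and recover $T_k$-transformed formulas~\eqref{mapxy} row by row. The $k=2$ case and the $k\ge 3$ case need to be treated separately because the shape of the elementary network (and hence of $L_i$ and $P_i$) is different, but both reduce to the same kind of verification.

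For the $n\times n$ Lax representation the plan is first to check the factorization $A_{k,n}(z)=A_1(z)A_2(z)$ by multiplying the expressions in~\eqref{A12}, collapsing the product $Z^{-r'}Z^{r'+2}=Z^2$ and using the identities $Z\diag(d_i)Z^{-1}=\diag(d_{i+1})$ and $Z^n=-z\one_n$ to compare against the closed form~\eqref{dual_bm}; the shift $Z^{-r'}$ on the right of $A_1$ and $Z^{r'+2}$ on the left of $A_2$ are built in exactly so that the recombination $A_2(z)A_1(z)$ shifts every index in the $(\boxx,\boy)$-data by $-r'-1$ for $x$ and $-r'$ for $y$, which is the index shift appearing in~\eqref{mapxy}. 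The second step is then to verify that $A_2(z)A_1(z)$, after re-expressing via~\eqref{dual_bm}, equals $A_{k,n}(z)$ with $(\boxx,\boy)$ replaced by $(\boxx^*,\boy^*)$; this reduces, after multiplication by the diagonal factor $D_\sigma^{-1}$, to identifying $x_{i-r'-1}\sigma_{i+r}/\sigma_{i-r'-1}$ and $y_{i-r'}\sigma_{i+r+1}/\sigma_{i-r'}$ as the appropriate matrix entries. Conceptually this is the annular analogue of the argument used in Proposition~\ref{invariants}: cutting $\bar\can_{k,n}$ across the cut $\rho$ separates the network into two slices whose boundary measurements are $A_1(z)$ and $A_2(z)$, and the map $T_k$ swaps these two slices.

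The main obstacle is bookkeeping rather than conceptual: one has to keep rigorous track of the $r$ versus $r'$ asymmetry (which splits the verification into the even and odd $k$ cases), of the signs induced by intersections with the cut as fixed in Section~\ref{xyint}, and of the cyclic indexing that creates extra $z$-factors via $Z^n=-z\one_n$. Once a consistent convention is chosen, both identities follow from routine but somewhat lengthy algebra; no further Poisson or cluster-theoretic input is needed beyond what is already encoded in the construction of $T_k$ via the network moves.
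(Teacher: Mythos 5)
Your proposal matches the paper's proof in substance: the paper likewise declares that both identities ``can be verified by a direct calculation using equations~\eqref{mapxy}'', explains that $P_i(\lambda)$ and $A_j(z)$ were obtained by re-casting the Postnikov moves constituting $T_k$ as matrix transformations, derives the $n\times n$ factors as boundary measurement matrices of the outer and inner annular pieces of the conjugate network $\can'_{k,n}$, and leaves the $k\times k$ bookkeeping as an exercise. The only slight imprecision in your network gloss is that the splitting producing $A_1$ and $A_2$ is made along the dashed circle crossing all white-white edges of $\can'_{k,n}$ --- a curve parallel to, but distinct from, $\rho$, and available only after the type~3 and type~1 moves have been performed --- rather than along $\rho$ itself; since your actual verification is pure matrix algebra, this does not affect correctness.
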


\begin{proof} The claim can be verified by a direct calculation using equations \eqref{mapxy}. It is worth pointing out, however, that expressions for $P_i(\lambda)$ and $A_{j}(z)$  were derived by re-casting elementary
network transformations that constitute $T_k$ as matrix transformation. We will provide an explanation for the $n\times n$ Lax representation and  leave the details for the $k\times k$ Lax representation as an instructive exercise for an inquisitive reader.

First, we rewrite equation \eqref{mapxy} for $T_k$  in terms of $D_x, D_y$:
\begin{align*}
D^*_x &= \left (Z^{-r'-1} D_x D_\sigma^{-1} Z^{r'+1} \right ) \left (Z^{r} D_\sigma Z^{-r} \right ) \\
&= Z^{-r'-1} \left ( D_x D_\sigma^{-1} \right ) \left (Z^{k-1} D_\sigma Z^{1-k} \right ) Z^{r'+1}, \\
D^*_y &= \left (Z^{-r'} D_y D_\sigma^{-1} Z^{r'} \right ) \left (Z^{r+1} D_\sigma Z^{-r-1} \right ) \\
&= Z^{-r'} \left ( D_x D_\sigma^{-1} \right ) \left (Z^{k-1} D_\sigma Z^{1-k} \right ) Z^{r'}, 
\end{align*}
which allows one to express  $Z^{r'+1} A_{k,n}^*(z) Z^{-r'-1}$ as
$$
 Z  \left(D_x D_\sigma^{-1} \left(Z^{k-1} D_\sigma Z^{1-k} \right) Z^{-1}  +     Z  D_y D_\sigma^{-1}Z^{-1}  \left(Z^{k} D_\sigma Z^{-k} \right) \right)
 Z^{k-1} \left ( \one_n - Z\right)^{-1}.   
$$
Denote  $Z^{r'+1} A_{k,n}^*(z) Z^{-r'-1}$ by $A^\sharp_{k,n}(z)$. If we find $A_1^\sharp(z)$, $A_2^\sharp(z)$ such that 
$$
A_{k,n}(z) = A^\sharp_1(z) A^\sharp_2(z),\qquad A^\sharp_{k,n}(z) = A^\sharp_2(z) A^\sharp_1(z), 
$$
then $A_1(z)=A_1^\sharp(z) Z^{-r'-1}$, $A_2(z)=A_2^\sharp(z) Z^{r'+1}$ will provide the desired Lax representation.

Consider the transformation of the network $\can'_{k,n}$ induced by performing type 3 Postnikov's move at all quadrilateral faces followed by performing type 1 Postnikov's move at all white-white edges. 
The resulting network is shown in Fig.~\ref{conjmoves}.

\begin{figure}[hbtp]
\centering
\includegraphics[height=3.3in]{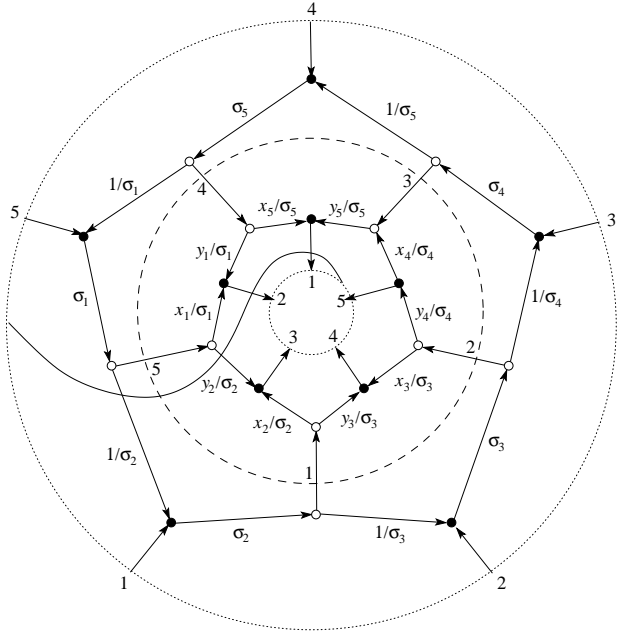}
\caption{The conjugate network $\can'_{3,5}$ after Postnikov's moves}
\label{conjmoves}
\end{figure}
 
To obtain  a  factorization of $A_{k,n}(z)$, we will view the latter network as a concatenation of two networks glued across a closed contour that intersects all white-white edges and no other edges
(in Fig.~\ref{conjmoves} it is represented by the dashed circle). Intersection points are labeled $1$ through $n$ counterclockwise with the label $i$ attached to
the point in the white-white edge incident to the edge of weight  $\sigma_{i+1}$. Furthermore, we adjusted the cut in such a way that it crosses the dashed circle through the segment between points labeled $1$ and $n$. 

Let $A^\sharp_1(z)$ and $A^\sharp_2(z)$ be boundary measurement matrices associated with the outer and the inner networks obtained this way. Clearly, $A_{k,n}(z) = A^\sharp_1(z) A^\sharp_2(z)$. To be able to perform the last sequence of transformation involved in $T_k$, namely to apply type 3 Postnikov's move to all black-black edges,
we have to cut the torus in a different way:
 we first need to separate two networks along the dashed circle and then glue the outer boundary of the outer network to the inner boundary of the inner network matching the labels of boundary vertices. But this means that $A^\sharp_{k,n}(z) = A^\sharp_2(z) A^\sharp_1(z)$.

It remains to check that expressions for  $A^\sharp_1(z)$, $A^\sharp_2(z)$ are consistent with~\eqref{A12}. Note that the cut crosses the dashed circle between the intersection points labeled $n-r'-1$ and $n-r'$. The network that corresponds to $A^\sharp_1(z)$ contains a unique oriented cycle of weight $z$ and, for any $i, j$, a unique simple directed path from source to sink $j$ that
contains the edge of weight $\sigma_{i+1}$.  This path does not intersect the cut if $i \leq j$, otherwise it intersects the cut once. Thus, the $(i,j)$ entry of $ A^\sharp_1(z) $ is equal
$ \frac {\sigma_{i+1}} {1+z} $ for $i \leq j$ and $ -\frac {\sigma_{i+1} z} {1+z} $ for $i > j$, which means that $A^\sharp_1(z) = Z D_\sigma Z^{-1}( \one_n - Z)^{-1}$ as needed.

The network that corresponds to $A^\sharp_2(z)$ contains no oriented cycles. The source $i$ is connected by  a directed path of weight $x_i/\sigma_i$  to the  sinks  $i+k-1$ and  by  a directed path of weight $y_{i+1}/\sigma_{i+1}$  to the  sink $i+k$. The former path intersects the cut if and only if 
$n-k+2 \leq i\leq n$, and the latter path intersects the cut if and only if $  n-k+1 \leq i\leq n$. We conclude that $A^\sharp_2(z)= Z\left ( D_x+ Z D_y\right ) D_\sigma^{-1} Z^{k-2}$, as needed.
\end{proof}

 In view of Proposition \ref{zero-curv}, the preservation of spectral invariants of $M_{k,n} (\lambda)$
(called, in this context, the {\em monodromy matrix\/}) and $A_{k,n} (\lambda)$ is obvious. In particular, 
$$
M_{k,n}^*= P_1 L_r \cdots L_n L_1\cdots L_{r-1} P_1^{-1}.
$$

\begin{remark} 
1. Two representations we obtained for $T_k$ give an example of what in integrable
systems literature is called {\em dual Lax representations}. 
A general technique for constructing integrable systems possessing such representations
based on dual moment maps was developed in~\cite{AHH}. 

2.  For  $k  = 3$, we obtain a $3\times 3$ zero-curvature representation for the pentagram map
alternative to the one given in \cite{So}.
\end{remark}

\subsection{Complete integrability}
\begin{theorem}\label{main}
  The map $T_k$ is completely integrable.
\end{theorem}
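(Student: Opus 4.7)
The plan is to verify the three hypotheses of Liouville integrability on a Poisson manifold: $T_k$-invariance of the bracket, a complete family of $T_k$-invariant integrals in involution, and control of the leaf dimension. The $T_k$-invariance of the bracket \eqref{Poiss} is already Proposition~\ref{bracketxy}(iii), and part (ii) of the same proposition pins down the rank as $2(n-d)$ with $d=\gcd(k-1,n)$, exhibiting $2d$ Casimir functions \eqref{casimirs}. So on a generic symplectic leaf, of dimension $2(n-d)$, I need to produce $n-d$ functionally independent $T_k$-invariant functions that Poisson-commute.

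For the candidate integrals I would take the coefficients $I_{ij}$ in \eqref{integrals}. Their $T_k$-invariance is Proposition~\ref{invariants}. For involution, I apply Theorem~\ref{cyltotor} to $\bar\can_{k,n}$, choosing the rim $\gamma$ exhibited in the paragraphs following that theorem so that no path reversals are needed and the resulting network on the cylinder is $\can_{k,n}$, with boundary measurement matrix $M_{k,n}(\lambda)$; the theorem then yields mutual involution of the $I_{ij}$ with respect to the standard Poisson bracket on $\FF_{\bar\can_{k,n}}$. Proposition~\ref{bracketxy}(i) identifies the restriction of this bracket to the Poisson submanifold $\FF^1_{\bar\can_{k,n}}$ — parametrized by $(\boxx,\boy)$ via \eqref{xyviabc} — with the bracket \eqref{Poiss}. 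Thus the $I_{ij}$ descend to $(\boxx,\boy)$-space as a family of $T_k$-invariant functions in involution, and they automatically Poisson-commute with the Casimirs in \eqref{casimirs}.

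The remaining and main obstacle is exhibiting $n-d$ of these integrals that are functionally independent on a generic symplectic leaf. My plan is to exploit the two parameterizations of the generating function $\frap_{\bar\can_{k,n}}(\lambda,z)$ furnished by Corollary~\ref{twocharpol}: compute the Newton polygon of this bivariate polynomial and match its lattice points with either integrals or Casimirs, in the spirit of the Goncharov-Kenyon cluster integrable systems perspective invoked in the introduction. Interior lattice points should index the $n-d$ independent integrals and boundary lattice points the $2d$ Casimirs, for a combined total of $n+d$ functionally independent functions on $(\boxx,\boy)$-space. To clinch this count I would evaluate the Jacobian of the collection $\{I_{ij}\}\cup\{\text{Casimirs from \eqref{casimirs}}\}$ at a conveniently chosen orbit — for instance a translation-invariant one $x_i\equiv x$, $y_i\equiv y$, where the Lax matrices $L_i(\lambda)$ commute and $M_{k,n}(\lambda)=L(\lambda)^n$ so that its spectral invariants are explicit symmetric functions in the eigenvalues of $L(\lambda)$ — and verify rank $n+d$ there, using upper-semicontinuity of rank to propagate independence to the generic point. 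Combining this independence with the involution and $T_k$-invariance established above then delivers complete integrability of $T_k$.
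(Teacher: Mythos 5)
Your overall architecture coincides with the paper's: invariance of the bracket \eqref{Poiss} and the count of Casimirs from Proposition~\ref{netbracket}, invariance of the $I_{ij}$ from Proposition~\ref{invariants}, involutivity from Theorem~\ref{cyltotor} combined with the identification of the standard bracket on $\FF^1_{\bar\can_{k,n}}$ with \eqref{Poiss}, and the reduction of complete integrability to exhibiting $n+d$ functionally independent spectral invariants. The difficulty is concentrated exactly where you place it, in the independence count, but your plan for that step has a genuine gap. First, the Newton-polygon bookkeeping (interior points $\leftrightarrow$ integrals, boundary points $\leftrightarrow$ Casimirs) only counts candidate functions; it does not by itself establish their functional independence, so everything rests on your Jacobian evaluation. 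Second, and fatally, the translation-invariant orbit $x_i\equiv x$, $y_i\equiv y$ is the worst possible place to evaluate that Jacobian. All the $I_{ij}$ are invariant under the cyclic shift of indices (the shift conjugates $M_{k,n}(\lambda)=L_1\cdots L_n$ into $L_2\cdots L_nL_1$), so at a shift-fixed point every gradient $\nabla I_{ij}$ is itself a shift-invariant vector, i.e., it lies in the two-dimensional subspace spanned by $(1,\dots,1,0,\dots,0)$ and $(0,\dots,0,1,\dots,1)$. Hence the Jacobian of the whole family $\{I_{ij}\}\cup\{\text{Casimirs}\}$ has rank at most $2$ there, nowhere near $n+d$. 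Upper-semicontinuity of rank then gives you nothing.

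The paper avoids this by choosing a specialization that breaks the symmetry while keeping the computation explicit: it uses the $d$ Casimirs from \eqref{casimirs} that depend only on $\boy$, so that it suffices to show the $\boxx$-gradients of $f_s=\frac1{s+1}\Trace A_{k,n}^{s+1}(z)$, $s=0,\dots,n-1$, are independent at one point; it then sets $\boy=-\boxx$ with $\boxx$ generic, which collapses $(D_x+D_yZ)Z^{k-1}(\one_n-Z)^{-1}$ to $D_xZ^{k-1}$ and makes the $i$th component of $\nabla_{\boxx}f_s$ a monomial $\prod_{\beta=1}^s x_{i-\beta(k-1)}$ up to a factor in $z$; a further specialization $x_{d+1}=\cdots=x_n=1$ turns the resulting $n\times n$ matrix into a Vandermonde-type determinant equal (up to sign) to $\prod_{i=1}^d(x_i-1)^{n/d-1}\prod_{i<j}(x_i-x_j)^{n/d}$. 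If you want to salvage your plan, you must replace the translation-invariant orbit by a point at which the cyclic symmetry is broken; the paper's choice $\boy=-\boxx$ is precisely such a point, engineered so that the traces of powers of $A_{k,n}(z)$ become monomials whose exponent vectors are visibly independent.
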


\begin{proof} Proposition~\ref{invariants} shows that spectral invariants of $M_{k,n}(\lambda)$ (equivalently,  
by Corollary~\ref{twocharpol}, spectral invariants of $A_{k,n}(z)$) 
are conserved quantities for $T_k$, while Theorem~\ref{cyltotor} and Proposition~\ref{netbracket} imply
that conserved quantities Poisson-commute. To establish complete integrability, we need to prove that this Poisson-commutative family is maximal. 

By Proposition \ref{netbracket},
the number of Casimir functions for our Poisson structure is $2 d$, where $d=\gcd(k-1, n)$. Therefore, we need to show that among spectral invariants of $A_{k,n}(z)$ there are $n+d$ independent functions of ${\bf x}=(x_i)_{i=1}^n$, ${\bf y}=(y_i)_{i=1}^n$. Furthermore, among Casimir functions described in~\eqref{casimirs} there are $d$ independent functions that depend only on  
$\bf y$. Hence it suffices to prove that
gradients of functions $f_s({\bf x}, {\bf y})=\frac {1} {s+1}\Trace A^{s+1}_{k,n}(z)$,  $s=0,\ldots, n-1$, 
viewed as functions  of $\bf x$ are linearly independent for almost all ${\bf x}, {\bf y}$ or, equivalently, 
since $f_s({\bf x}, {\bf y})$ are polynomials, for at least one point ${\bf x}, {\bf y}$. 
Using the formula for variation of traces of powers of a square matrix
$$
\delta \Trace A^{s+1} = \Trace \sum_{i=0}^s A^i \delta A A^{s-i} = (s+1) \Trace (\delta A A^s), 
$$
we deduce that the $i$th component of the gradient
$\nabla_{\bf x} f_s({\bf x}, {\bf y})$ with respect to $\bf x$ is equal to the $i$th diagonal element of the matrix
$$
Z^{k-1} (\one_n - Z)^{-1} \left ((D_x + D_y Z) Z^{k-1} (\one_n - Z)^{-1}\right )^s. 
$$
In particular,
\begin{align*}
\left(\nabla_{\bf x} f_s({\bf x}, - {\bf x})\right)_i &= \left ( Z^{k-1} (\one_n - Z)^{-1} \left (D_x Z^{k-1}\right )^s\right )_{ii}\\
&= {\frac {1}{1+z} }\left ( Z^{k-1} (\one_n + \ldots  + Z^{n-1}) \left (D_x Z^{k-1}\right )^s\right )_{ii} \\
&=  {\frac {1}{1+z} }\left ( Z^{k-1+l} \left (D_x Z^{k-1}\right )^s\right )_{ii}  \\
&=  {\frac {(-z)^{\lceil \frac {(s+1)(k-1)} {n}\rceil} }{1+z} }\prod_{\beta=1}^s x_{i-\beta (k-1)}.
\end{align*}
Here  the second line follows from~\eqref{secid}, in the third line $l + (s+1)(k-1)=0\bmod n$,
since only one of the powers of $Z$ present in the second line contributes to the diagonal, and it is the one 
with the exponent divisible by $n$,
the fourth line is obtained by
repeated application of~\eqref{firstid}, and the index $i-\beta(k-1)$ is understood $\bmod\ n$.
Prefactors depending on $z$ play no role in analyzing linear independence of  
$\nabla_{\bf x} f_s({\bf x}, - {\bf x})$, so we ignore them and form 
an $n\times n$ matrix $F=\left ( \prod_{\beta=1}^s x_{i-\beta (k-1)} \right )_{s,i=1}^n$. We need to show that $\det F$ is generically nonzero. We  further specialize by setting $x_{d+1}=\cdots=x_n=1$, then columns  of $F$ become
$$
F_{i+\alpha (k-1)} =\mbox{col}( \underbrace{1,\ldots,1}_{\alpha}, \underbrace{x_i,\ldots,x_i}_{n/d}, \underbrace{x^2_i,\ldots, x^2_i}_{n/d},\ldots )
$$ 
for $i=1,\ldots, d$, $\alpha= 1,\ldots, n/d$. Now the standard argument (akin to the one used in computing 
Vandermonde determinants)
shows that up to a sign $\det F$  is equal to $\prod_{i=1}^d (x_i -1)^{n/d - 1} \prod_{1\leq i < j \leq d} (x_i - 
x_j)^{n/d}$. The proof is complete.
\end{proof}

\begin{corollary}
Any rational function of $I_{ij}$ that is homogeneous of degree zero
in variables $x,y$, depends only on $p, q$ and is preserved by the map $\overline T_k$. On the level set $\{ \Pi
p_i q_i =1 \}$, such functions generate
a complete involutive family of integrals for the map $\barT_k$.
\label{completepq}
\end{corollary}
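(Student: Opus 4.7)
The plan is to exploit that $T_k$ commutes with the scaling $(\boxx,\boy) \mapsto (t\boxx, t\boy)$, whose orbits are precisely the fibers of $\pi_k$ (Remark after Proposition~\ref{Tkdef}), so that $\pi_k$ intertwines $T_k$ with the restriction $\barT^{(1)}_k$ of $\barT_k$ to $L := \{\prod_{i=1}^n p_iq_i = 1\}$. The heart of the argument is to determine the homogeneity degrees of the $I_{ij}$.

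First I would show that, setting $D = \diag(1, t^{-1}, \ldots, t^{-1})$, the elementary Lax matrix~\eqref{Laxmat} satisfies
\[
L_i(\lambda)\bigl|_{(t\boxx, t\boy)} \;=\; D\, L_i(t\lambda)\, D^{-1},
\]
from which the intermediate $D$'s telescope in the product~\eqref{Mmatrix} to give $M_{k,n}(\lambda)|_{(t\boxx,t\boy)} = D\, M_{k,n}(t\lambda)\, D^{-1}$. Comparing coefficients of $\lambda^i z^j$ in~\eqref{integrals} at $(t\boxx, t\boy)$ versus $(\boxx,\boy)$ then yields $I_{ij}(t\boxx, t\boy) = t^i I_{ij}(\boxx, \boy)$, so $I_{ij}$ is homogeneous of degree $i$. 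Any rational function $F$ of the $I_{ij}$'s that is homogeneous of degree zero in $(\boxx,\boy)$ is thus scale-invariant, and hence descends through $\pi_k$ to a rational function $\tilde F$ defined on the image $L$ of $\pi_k$.

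Invariance of $\tilde F$ under $\barT_k$ is then a short chase: Proposition~\ref{invariants} gives $F\circ T_k = F$, and combined with $\pi_k \circ T_k = \barT^{(1)}_k \circ \pi_k$ (Proposition~\ref{Tkdef}(ii)) and surjectivity of $\pi_k$ onto $L$ this yields $\tilde F \circ \barT_k = \tilde F$. Involutivity follows with equal ease: by Corollary~\ref{twobrackets} and the parametrization argument in the proof of Proposition~\ref{netbracket}(i), both $\Poi_k$ on $(\bop,\boq)$ and the bracket~\eqref{Poiss} on $(\boxx,\boy)$ are incarnations of the standard Poisson structure on $\FF^1_{\bar\can_{k,n}}$, so $\pi_k$ is Poisson; the vanishing of $\{I_{ij}, I_{i'j'}\}$ on $(\boxx,\boy)$-space from Theorem~\ref{main} descends to the vanishing of $\{\tilde F, \tilde G\}_k$ on $L$.

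The substantive step, which I expect will be the main obstacle, is the completeness count on $L$. Using $C\one = \one$ together with~\eqref{omegas} one checks that the scaling direction $(\one,\one)$ lies in $\ker \Omega$; in fact it equals the sum of the $2d$ log-Casimir vectors in~\eqref{casimirs}, so reducing by the scaling removes only a Casimir direction and preserves the rank. Consequently $\Poi_k|_L$ has rank $2(n-d)$ and, with $\dim L = 2n-1$, corank $2d-1$, so complete integrability on $L$ demands $n+d-1$ independent commuting integrals. Theorem~\ref{main} supplies $n+d$ algebraically independent $I_{ij}$'s, each of strictly positive homogeneity degree; the fixed subfield of the weighted $\R^*$-action $I_{ij}\mapsto t^i I_{ij}$ with positive weights on the $(n+d)$-dimensional image has transcendence degree $n+d-1$, so the degree-zero rational combinations of the $I_{ij}$'s produce exactly $n+d-1$ independent integrals on $L$, matching the required count.
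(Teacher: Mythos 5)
The paper states this corollary without proof, and your argument supplies exactly the justification it intends: the conjugation $L_i(\lambda)|_{(t\boxx,t\boy)}=DL_i(t\lambda)D^{-1}$ does give $I_{ij}(t\boxx,t\boy)=t^iI_{ij}(\boxx,\boy)$, degree-zero combinations therefore descend through $\pi_k$ to the level set, and the count $n+d-1=\frac{(2n-1)+(2d-1)}{2}$ matches the corank computation after reduction by the scaling direction $(\one,\one)$, which is indeed the sum of the log-Casimirs of~\eqref{casimirs}. Your parenthetical worry about coefficients of homogeneity degree zero is also resolved by the paper itself: by Corollary~\ref{twocharpol}, $\frap_{\bar\can_{k,n}}(0,z)=1+z$, so the only nonzero $I_{0j}$ are constants and every nonconstant $I_{ij}$ has $i\ge1$. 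The proposal is correct and follows the approach the paper leaves implicit.
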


\begin{remark} In general,
these functions define a continuous integrable system on level sets
of the form $\{ \Pi p_i= c_1, \Pi q_i =c_2 \}$, and the map $\barT_k^{(c)}$ intertwines
the flows of this system on different level sets lying on the same hypersurface $c_1c_2=c$.
Numerical evidence suggests that $\barT_k^{(c)}$ is not integrable whenever $c\ne 1$.
Indeed, the left part of Fig.~\ref{IntegrableVsNonIntegrable} demonstrates the typical integrable behavior while the right part clearly shows at least three accumulation points which contradicts integrability. 

\begin{figure}[hbtp]
\centering
\includegraphics[width=2.2in]{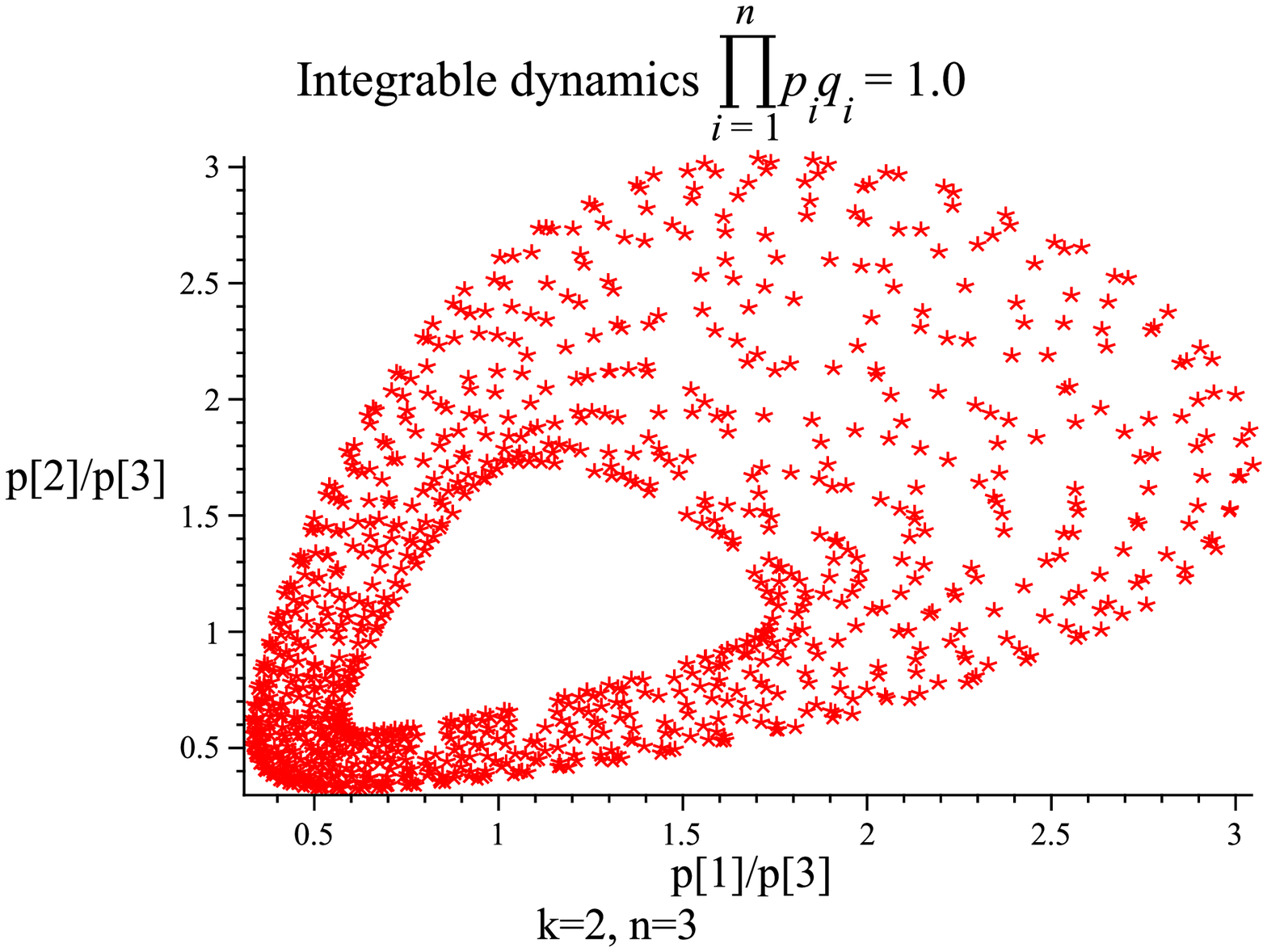}
\includegraphics[width=2.5in]{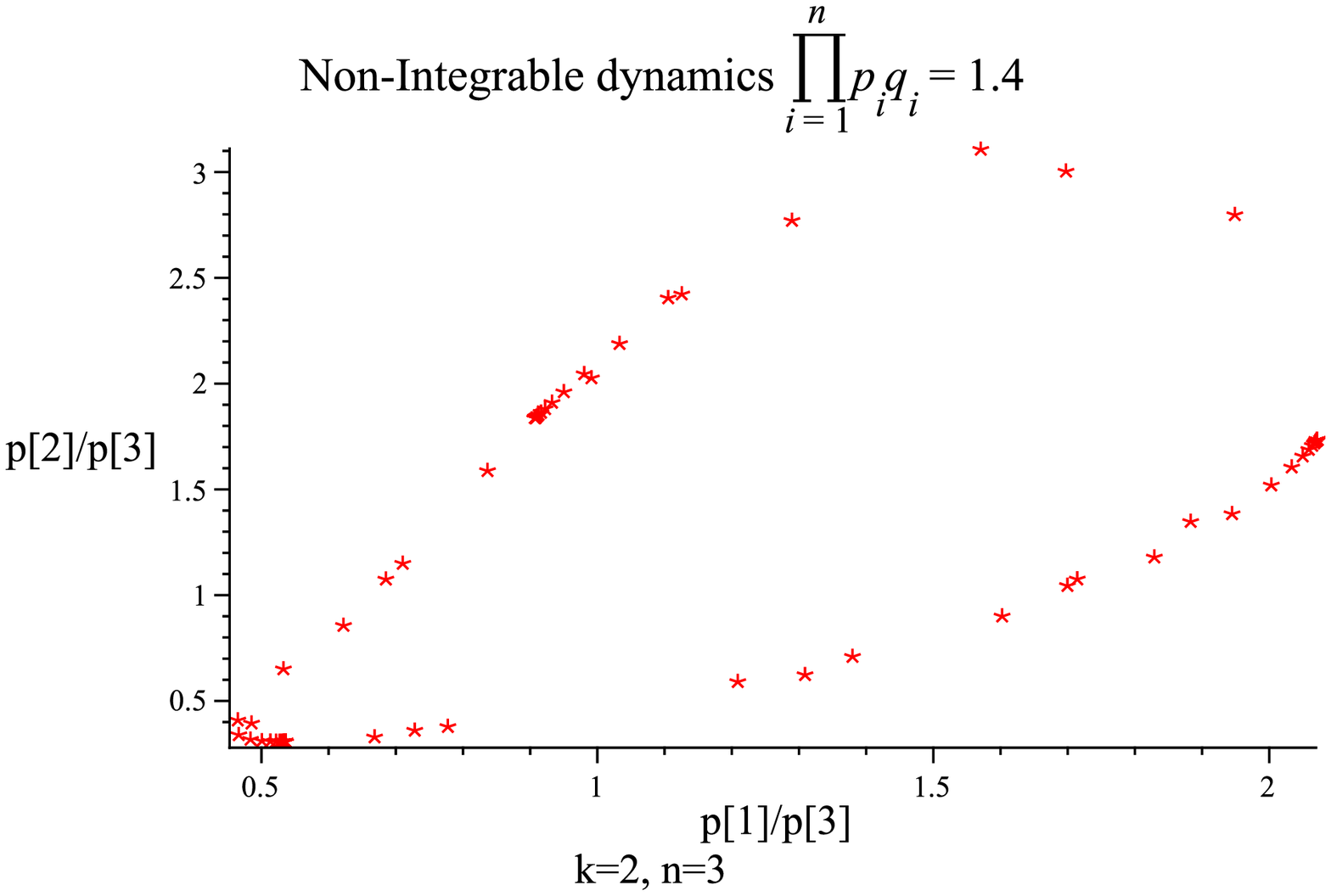}
\caption{Integrable and non-integrable dynamics}
\label{IntegrableVsNonIntegrable}
\end{figure}

\end{remark}

\subsection{Spectral curve}
We could have deduced independence of the integrals of $T_k$ from the properties of the {\em spectral curve\/}
\begin{equation}
\label{speccurve}
\frap_{\bar\can_{k,n}}(\lambda,z)=
\det (I_k  + z M_{k,n})(\lambda)) = (1+z) \det (I_n  + \lambda A_{k,n}(z)) =\sum_{i,j} I_{ij}\lambda^i z^j = 0
\end{equation}
in the spirit of \cite{OPRS}, Section~9. 
We will briefly discuss this spectral curve here to point out parallels between our approach 
and that of \cite{GoK}. 
There, the starting point for  constructing an integrable system is a centrally symmetric polygon 
$\Delta$ with vertices in $\mathbb{Z}^2$ which gives rise
to a dimer configuration on a torus  whose {\em partition function\/}  
serves as a generating function for Casimirs and integrals in involution and
 defines an algebraic curve with a Newton polygon $\Delta$.

\begin{proposition}
\label{Newton}
The Newton polygon $\Delta$ of the spectral curve~\eqref{speccurve}
is a parallelogram with vertices $(0,0), (0,1), (n,k), (n, k-1)$.
\end{proposition}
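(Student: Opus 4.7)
The plan is to exploit the two equivalent expressions for the spectral polynomial given by Corollary~\ref{twocharpol}, namely
\[
\frap_{\bar\can_{k,n}}(\lambda,z)\;=\;\det(I_k+zM_{k,n}(\lambda))\;=\;(1+z)\det(I_n+\lambda A_{k,n}(z)),
\]
first to bound the Newton polygon inside the rectangle $[0,n]\times[0,k]$ and then to identify the four extreme monomials that serve as its vertices. The containment is automatic: the first expression gives $\deg_z\frap\leq k$ because $M_{k,n}(\lambda)$ is $k\times k$, and the second gives $\deg_\lambda\frap\leq n$ because $A_{k,n}(z)$ is $n\times n$.

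Next, I would specialize to each of the four sides of this rectangle. Setting $\lambda=0$ in the second form gives $\frap(0,z)=1+z$, placing only $(0,0)$ and $(0,1)$ on the left side; setting $z=0$ in the first form gives $\frap(\lambda,0)=1$, leaving only $(0,0)$ on the bottom. The top-edge coefficient of $z^k$ equals $\det M_{k,n}(\lambda)=\prod_{i=1}^n\det L_i(\lambda)$, and a cofactor expansion of $L_i$ from~\eqref{Laxmat} along its unique $\lambda$-entry gives $\det L_i(\lambda)=(-1)^k\lambda y_i$, so $\det M_{k,n}(\lambda)=(-1)^{nk}\lambda^n\prod y_i$ forces $(n,k)$ to be the only top-edge point. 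The right-edge coefficient of $\lambda^n$ equals $(1+z)\det A_{k,n}(z)$; using the factorization of $A_{k,n}(z)$ from Proposition~\ref{conj_bm} together with $\det Z=(-1)^n z$, $\det(\one_n-Z)=1+z$ (via $Z^n=-z\one_n$), and the cofactor identity $\det(D_x+D_yZ)=\prod x_i+(-1)^n z\prod y_i$, I expect to obtain
\[
(1+z)\det A_{k,n}(z)\;=\;(-1)^{n(k-1)}z^{k-1}\prod x_i+(-1)^{nk}z^k\prod y_i,
\]
so that both $(n,k-1)$ and $(n,k)$ carry nonzero coefficients and are the only right-edge contributions. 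At this point all four claimed vertices are nonzero, no other boundary point of the rectangle contributes, and by convexity the Newton polygon already contains the claimed parallelogram.

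To finish I must rule out any nonzero monomial at a lattice point lying inside the rectangle but outside the parallelogram. Here I would invoke the combinatorial formula of Proposition~\ref{talaska}, which expresses $\frap$ as a signed sum over collections $\cac$ of pairwise disjoint simple directed cycles on the torus $\bar\can_{k,n}$, each weighted by $\lambda^{d_\lambda(\cac)}z^{d_z(\cac)}$. Because $\bar\can_{k,n}$ has no contractible cycles and because disjoint simple essential cycles on a torus are pairwise isotopic, every such $\cac$ consists of $m\geq 1$ parallel copies of a single primitive homology class $(a,b)\in\Z^2$ realized by a simple cycle of $\bar\can_{k,n}$, and its contribution sits at the lattice point $m\cdot(-b,a)$. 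The main obstacle, and the technical heart of the proof, is to classify these primitive classes: guided by the $n$-fold periodic gluing with shift distances $1$ and $k-1$ recorded in Section~\ref{xyint}, I expect exactly three of them to be realized, namely $(0,1)$, $(n/d,(k-1)/d)$ with $d=\gcd(n,k-1)$, and $(n/d',k/d')$ with $d'=\gcd(n,k)$; their admissible multiples fall on the vertex $(0,1)$ and on the segments joining $(0,0)$ to $(n,k-1)$ and to $(n,k)$, all of which lie inside the target parallelogram, completing the proof.
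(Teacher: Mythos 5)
Your boundary analysis is sound and is a nice complement to the paper's: the containment $\Delta\subseteq[0,n]\times[0,k]$ from the two determinant formulas, together with the four edge specializations, does correctly pin down the four claimed points as vertices and shows how $\Delta$ meets each side of the rectangle. (The paper gets the same four vertices by computing $\det A_{k,n}(z)$ directly.) But, as you recognize, this leaves the real content of the proposition untouched: one must exclude every lattice point of the rectangle lying in the two triangles outside the parallelogram, e.g.\ $(1,k-1)$ for $k\ge3$, $n>k-1$, and boundary information alone cannot do that.

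The final step, where you attempt this exclusion, contains the genuine gap. Your classification of realized primitive cycle classes into exactly three types is false: in $\can'_{k,n}$ there is, for each $\alpha$, a simple directed cycle from source $\alpha$ back to sink $\alpha$ (take $\beta_1=n\in[k-1,n+k-2]$ in the paper's notation), and it meets $\rho$ once and $\gamma$ once, so the class with intersection data $(1,1)$ is realized whenever $2\le k<n$; it is none of your three classes and it sits in the \emph{interior} of the parallelogram. More decisively, your conclusion that all monomials lie on the vertex $(0,1)$ and the two segments joining $(0,0)$ to $(n,k-1)$ and $(n,k)$ would force the coefficients at all $n-d$ interior lattice points of $\Delta$ to vanish identically; those coefficients are precisely the integrals $I_{ij}$, which the paper proves are generically nonzero and independent. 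There is also a second, subtler problem: even with a correct list of single-cycle classes $(i_0,j_0)$, each satisfying $j_0\le\frac{k-1}{n}i_0+1$, taking $m$ parallel copies only yields $j\le\frac{k-1}{n}i+m$, which is useless for $m\ge2$. The missing idea is the paper's \emph{disjointness} argument: each cycle of the collection ``prohibits'' at least $j_tn-i_tk$ indices along $\rho$, the prohibited sets of distinct cycles are disjoint, and counting used plus prohibited indices against the $n$ available ones gives the uniform bound $jn-i(k-1)\le n$ for the whole collection, not just for a single cycle. Together with the lower bound $\frac{k-1}{n}i\le j$ (which does sum correctly over cycles), this is what confines $\Delta$ to the parallelogram.
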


\begin{proof} Obviously, $(0,0), (0, 1)\in \Delta$. Since 
$$
\det A_{k,n}(z) = \frac{(-z)^{k-1}}{1+z} \left(\prod_i x_i - (-1)^n z\prod_i y_i\right)
$$ 
by~\eqref{dual_bm}, $(n,k), (n, k-1)$ are in $\Delta$. 

Let $(i,j)\in\Delta$; by the construction of 
$\frap_{\bar\can_{k,n}}(\lambda,z)$, it corresponds to a family $\cac$ of disjoint simple cycles 
in $\bar\can_{k,n}$ that has in total $i$ intersection points with $\rho$ and $j$ intersection points with $\gamma$.
Consider a simple directed cycle $C_t\in\cac$ that intersects the cut $\rho$ 
at points numbered $\alpha_1,\ldots, \alpha_{i_t+1}=\alpha_1$ (we list them in the order they appear along $C_t$
 starting with the smallest number and denote this list $A(C_t)$). 
 It is convenient to visualize $C_t$ using the network $\can'_{k,n}$. 
 Then one can see that $\alpha_{s+1}$ can be expressed as $\alpha_{s+1} = \alpha_s  + \beta_s 
 \bmod n$, where $s = 1, \ldots, i_t$ and $\beta_s \in [k-1, n+k-2]$, see Fig.~\ref{conjugate}. We thus have 
 $i_t (k-1) \leq \beta_1 + \cdots + \beta_{i_t} = j_t n$ for some $j_t$, which means that $C_t$ 
 intersects the rim $\gamma$ exactly  $j_t$ times. Consequently, $i_t$ and $j_t$ are subject to the 
 inequality $ \frac {k-1} {n} i_t \leq j_t$. Summing up over all cycles in $\cac$ and taking into account
 that $\sum i_t=i$, $\sum j_t=j$,  we get $ \frac {k-1} {n} i \leq j$.
 
 On the other hand, each shift $\beta_s$ as above prohibits at least $\beta_s-k$ indices 
from entering  the set $A(C)$ for any cycle $C\in\cac$; more exactly, if $\beta_s>k$ then the prohibited
indices are $\alpha_s+k,\dots \alpha_s+\beta_s-1$, otherwise there are no such indices. So,
 totally at least $j_tn-i_tk$ indices are prohibited.
 Clearly, the sets of prohibited indices for distinct cycles are disjoint. Therefore, altogether 
at least $jn-ik$ indices are prohibited and $i$ indices are used, hence $jn-i(k-1)\leq n$. 
Thus, if $(i,j) \in \Delta$, then $ \frac {k-1} {n} i \leq j \leq \frac {k-1} {n} i + 1$, which, together with an obvious inequality $0\leq i \leq n$, proves the claim.
\end{proof}

There are $2 (d +1)$ integer points on the boundary of $\Delta$: 
$$
 \left(l \frac {n} {d}, l \frac {k-1} {d}\right),\qquad \left(l \frac {n} {d}, l \frac {k-1} {d} +1\right),
 \quad l= 0,\ldots, d.
 $$ 
 The number of interior integer points (equal to the genus of the spectral curve) is $n-d$. The coefficient of 
 $\frap_{\bar\can_{k,n}}(\lambda,z)$ that corresponds to a point of the first type is a sum where each term is  
 the product of weights of $l$ disjoint cycles, each of them characterized uniquely by $i=n/d$, $\alpha_1\in [1,d]$, and $\beta_1 =\ldots = \beta_{n/d} = k-1$ (see Fig.~\ref{conjugate}). The weight of such cycle is
the Casimir function $\prod_{s=0}^{n/d-1} x_{\alpha_1 + s (k-1)}$, cp.~with the first expression in~\eqref{casimirs}.

On the other hand,  any term
contributing to the coefficient corresponding to a point of the second type is the weight of a collection of 
disjoint cycles that is represented in  $\can'_{k,n}$
by a unique collection of non-intersecting 
paths joining $ln/d$ sources $\alpha_1, \alpha_2, \ldots, \alpha_l, \alpha_1 +(k-1) , \alpha_2 + (k-1), \ldots, \alpha_l 
+(k-1), \ldots$ to sinks $\alpha_2 + (k-1) , \alpha_3 +(k-1) ,\ldots,  \alpha_l + (k-1), \alpha_1 +2 (k-1), 
\ldots$, respectively, where $1\leq \alpha_1 < \ldots < \alpha_l \leq d$. The weight in question is the product of Casimir functions $\prod_{s=0}^{n/d-1} y_{\alpha_t + s (k-1)}$ for $t=1,\ldots, l$, cp.~with the second expression in~\eqref{casimirs}.

Thus, just like in \cite{GoK}, interior points of $\Delta$ correspond to independent integrals while
integer points on the boundary of $\Delta$ correspond to Casimir functions.

\section{Geometric interpretation} \label{geom}

In this section we give a geometric interpretations of the maps $T_k$. The cases $k\geq 3$ and $k=2$ are different and are
treated separately.
\smallskip

\subsection{The case $k\geq 3$}

\subsubsection{Corrugated  polygons and generalized higher pentagram maps}
As we already mentioned, a {\it twisted $n$-gon\/} in a projective space is
 a sequence of points $V=(V_i)$ such that $V_{i+n}=M(V_i)$ for all $i \in \Z$ and some fixed projective transformation $M$  called the {\it monodromy}. The projective group naturally acts on the space of twisted $n$-gons. Let ${\mathcal P}_{k,n}$ be the
 space of projective equivalence classes of generic twisted $n$-gons in $\RP^{k-1}$, where ``generic" means that every $k$ consecutive vertices do not lie in a projective subspace. 
Clearly, the space ${\mathcal P}_{k,n}$ has dimension $n(k-1)$.

We say that  a twisted polygon $V$ is {\it corrugated\/} if, for every $i$, the vertices $V_i$, $V_{i+1}$, 
$V_{i+k-1}$ and $V_{i+k}$ span a projective plane. The projective group preserves the space of corrugated polygons. 
Projective equivalence classes of corrugated polygons constitute an algebraic subvariety of the moduli space of polygons in the projective space. Note that a polygon in $\RP^{2}$ is automatically corrugated.

Denote by ${\mathcal P}^0_{k,n}\subset \caP_{k,n}$ the space of projective equivalence classes of  corrugated polygons satisfying the additional genericity assumption that, for every $i$,
every three out of the four vertices $V_i$, $V_{i+1}$, $V_{i+k-1}$ and $V_{i+k}$ are not collinear.

\begin{lemma}
One has: ${\rm dim}\ {\mathcal P}^0_{k,n} = 2n$.
\end{lemma}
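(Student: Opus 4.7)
The plan is to compute the dimension by using the standard parametrization of twisted polygons via linear difference equations, and then to observe that the corrugated condition translates into the vanishing of a block of coefficients of the recursion.

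Under the genericity assumption that every $k$ consecutive vertices $V_i, V_{i+1}, \ldots, V_{i+k-1}$ are in general position in $\RP^{k-1}$, one can choose lifts $\tilde V_i \in \R^k$ such that the normalization
\[
\det(\tilde V_i, \tilde V_{i+1}, \ldots, \tilde V_{i+k-1}) = 1
\]
holds for every $i$. These lifts are unique (up to an overall sign issue for even $k$, irrelevant to dimension counting), they satisfy $\tilde V_{i+n} = \tilde M \tilde V_i$ for a lift $\tilde M \in \SL(k,\R)$ of the monodromy, and they obey an order-$k$ linear recursion
\[
\tilde V_{i+k} = a_{i,k-1}\tilde V_{i+k-1} + a_{i,k-2}\tilde V_{i+k-2} + \cdots + a_{i,1}\tilde V_{i+1} + (-1)^{k-1}\tilde V_i,
\]
whose leading coefficient is forced by taking the determinant of both sides against $(\tilde V_{i+1},\ldots,\tilde V_{i+k-1})$ and using the normalization. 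The remaining $k-1$ coefficients $(a_{i,1},\ldots,a_{i,k-1})$ are free and $n$-periodic in $i$. Thus this collection parametrizes an open dense subset of $\caP_{k,n}$ and recovers the known count $\dim\caP_{k,n}=n(k-1)$.

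The corrugated condition asserts that $\tilde V_i,\tilde V_{i+1},\tilde V_{i+k-1},\tilde V_{i+k}$ are linearly dependent. Since $\tilde V_i,\tilde V_{i+1},\tilde V_{i+k-1}$ are already linearly independent by the genericity of $k$-consecutive vertices, dependence is equivalent to $\tilde V_{i+k}$ lying in their span. Comparing with the recursion above, this is equivalent to
\[
a_{i,2}=a_{i,3}=\cdots=a_{i,k-2}=0,
\]
which is $k-3$ (transversal) conditions per index $i$, hence $n(k-3)$ conditions in total. The extra genericity imposed in the definition of $\caP^0_{k,n}$, namely that no three of $V_i,V_{i+1},V_{i+k-1},V_{i+k}$ are collinear, is precisely the openness condition $a_{i,1}\ne 0$ and $a_{i,k-1}\ne 0$, and so $\caP^0_{k,n}$ is a nonempty Zariski-open subset of the common zero locus of the equations $\{a_{i,j}=0 : 1\le i\le n,\ 2\le j\le k-2\}$. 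Counting dimensions yields
\[
\dim\caP^0_{k,n}=n(k-1)-n(k-3)=2n,
\]
valid for $k\ge 4$; for $k=3$ the corrugated condition is vacuous and $\dim\caP^0_{3,n}=2n$ holds trivially.

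The only delicate point is the transversality of the $n(k-3)$ scalar equations cutting out $\caP^0_{k,n}$. In the difference-equation chart, however, these equations simply set a selection of coordinate functions to zero, so transversality is automatic in that chart; passing back to $\caP_{k,n}$ via the open parametrization preserves the dimension. The main (mild) obstacle is therefore just setting up the normalized lift cleanly in the presence of the monodromy and the sign ambiguity; once that is in place, the dimension count is immediate.
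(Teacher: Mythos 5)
Your proof is correct and takes essentially the same route as the paper: both compute $\dim\caP^0_{k,n}$ as $\dim\caP_{k,n}=n(k-1)$ minus the $n(k-3)$ conditions imposed per vertex by corrugation, giving $2n$. The only difference is that you explicitly justify transversality by exhibiting the normalized difference-operator chart in which the corrugation constraints become vanishing of coordinate functions — a point the paper leaves implicit, and a chart that is essentially the normalized-lift recursion the paper sets up later when introducing the $(\boxx,\boy)$-coordinates on corrugated polygons.
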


\begin{proof}
As it was already mentioned, ${\rm dim}\ {\mathcal P}_{k,n} = n(k-1)$. For each $i=1,\ldots,n$, one has a constraint: the vertex $V_{i+k}$ lies in the projective plane spanned by $V_i$, $V_{i+1}$, $V_{i+k-1}$. The codimension of a plane in $\RP^{k-1}$ is $k-3$, which yields $k-3$ equations. Thus 
$$
{\rm dim}\ {\mathcal P}^0_{k,n} = n(k-1) - n(k-3) = 2n,
$$
as claimed. 
\end{proof}

 The consecutive {\it $(k-1)$-diagonals\/} (the diagonals connecting $V_{i}$ and $V_{i+k-1}$) of a corrugated polygon intersect, and the intersection points form the vertices of a new twisted polygon: the $i$th vertex of this new polygon is the intersection of diagonals $(V_i,V_{i+k-1})$ and $(V_{i+1},V_{i+k})$.
 This $(k-1)$-diagonal map commutes with projective transformations, and hence one obtains a rational map
 $F_k: {\mathcal P}^0_{k,n}\to {\mathcal P}_{k,n}$. (Note that this rational map is well defined only on an open subset of ${\mathcal P}^0_{k,n}$ because the image polygon may be degenerate.)
$F_3$ is the pentagram map; the maps $F_k$ for $k>3$ are called {\it generalized higher pentagram maps}.

 \begin{remark} Corrugated polygons for $k>2$ were independently defined by M.Glick (private communication). 
\end{remark}

Given a corrugated polygon $V$, one can also construct a new polygon whose $i$th vertex is the 
intersection of the lines $(V_i,V_{i+1})$ and $(V_{i+k-1},V_{i+k})$. 
This defines a map $G_k: {\mathcal P}^0_{k,n}\to {\mathcal P}_{k,n}$. 

Similarly to above, one can define spaces of twisted and corrugated polygons in the dual projective space $(\RP^{k-1})^*$, as well 
as  dual analogs of the maps $F_k$ and $G_k$; in what follows, the objects in the dual space will be marked by an asterisk.  
Besides, we will need the notion of the projectively dual polygon. Let $V$ be a generic polygon in $\RP^{k-1}$. Each 
consecutive $(k-1)$-tuple of vertices spans a projective hyperplane, that is, a point of $(\RP^{k-1})^*$. This ordered collection of points represents the vertices of the dual polygon $W=V^*$; more exactly, the projective 
hyperplane spanned by $V_i, \dots, V_{i+k-2}$ represents the vertex $W_i$. We denote the projective duality map that takes $V$ to $W$ by $\Delta_k$.

\begin{proposition} \label{corrprop}
{\rm (i)} The image of a corrugated polygon under $F_k$ and under $G_k$ is a corrugated polygon.

{\rm (ii)} Up to a shift of indices by $k$, the maps $F_k$ and $G_k$ are inverse to each other.

{\rm (iii)} The polygon projectively dual to a corrugated polygon is corrugated.

{\rm (iv)} Projective duality $\Delta_k$ conjugates the maps $F_k$ and $G_k^*$.
\end{proposition}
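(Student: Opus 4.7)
The plan is to establish the four assertions in order, based on a simple combinatorial observation about shared lines.

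For (i) and (ii), the key point is that the two $(k-1)$-diagonals $(V_i,V_{i+k-1})$ and $(V_{i+1},V_{i+k})$ defining $F_k(V)_i$ overlap with those defining $F_k(V)_{i+1}$ in the line $\ell_{i+1}:=(V_{i+1},V_{i+k})$; hence $F_k(V)_i$ and $F_k(V)_{i+1}$ both lie on $\ell_{i+1}$, and similarly $F_k(V)_{i+k-1}$ and $F_k(V)_{i+k}$ both lie on $\ell_{i+k}$. Since $\ell_{i+1}$ and $\ell_{i+k}$ meet at $V_{i+k}$, they span a projective plane containing all four listed vertices, which proves that $F_k(V)$ is corrugated. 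A dual argument handles $G_k$: now $G_k(V)_i$ and $G_k(V)_{i+k-1}$ both lie on the edge $(V_{i+k-1},V_{i+k})$, while $G_k(V)_{i+1}$ and $G_k(V)_{i+k}$ both lie on $(V_{i+k},V_{i+k+1})$, and these edges meet at $V_{i+k}$. Part (ii) follows immediately: with $W=G_k(V)$, the second observation gives $(W_i,W_{i+k-1})=(V_{i+k-1},V_{i+k})$ and $(W_{i+1},W_{i+k})=(V_{i+k},V_{i+k+1})$, whose intersection is $V_{i+k}$, so $F_k\circ G_k=S_k$; a symmetric computation using the first observation gives $G_k\circ F_k=S_k$ as well.

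For (iii), let $H_j\subset\RP^{k-1}$ denote the projective hyperplane spanned by $V_j,\dots,V_{j+k-2}$ (so that the vertex $V^*_j$ of the dual polygon corresponds to $H_j$). It suffices to show that $H_i\cap H_{i+1}\cap H_{i+k-1}\cap H_{i+k}$ has codimension at most three in $\RP^{k-1}$. The genericity assumption on $\caP_{k,n}$ gives $H_i\cap H_{i+1}=\langle V_{i+1},\dots,V_{i+k-2}\rangle$ and $H_{i+k-1}\cap H_{i+k}=\langle V_{i+k},\dots,V_{i+2k-3}\rangle$, each of projective dimension $k-3$, so the task reduces to showing that the $2k-4$ vertices $V_{i+1},\dots,V_{i+k-2},V_{i+k},\dots,V_{i+2k-3}$ lie in a projective hyperplane. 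Applying corrugation at each of the $k-3$ indices $i+1,\dots,i+k-3$ yields $k-3$ linear relations involving precisely these vertices (the relation at index $i+s$ involves $V_{i+s},V_{i+s+1},V_{i+s+k-1},V_{i+s+k}$, all of which lie in the list whenever $1\le s\le k-3$). The relations are independent by a leading-coefficient argument: the relation at $s=1$ is the only one involving $V_{i+1}$, and by the noncollinearity hypothesis defining $\caP^0_{k,n}$ its coefficient of $V_{i+1}$ is nonzero; proceeding inductively with $V_{i+2},V_{i+3},\dots$ forces all coefficients in any linear dependence among the relations to vanish. Hence the projective span of these $2k-4$ vertices has dimension at most $k-2$.

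For (iv), I compute $(G_k^*\circ\Delta_k(V))_i\in(\RP^{k-1})^*$ as the intersection of the dual lines $(V^*_i,V^*_{i+1})$ and $(V^*_{i+k-1},V^*_{i+k})$; translated back to $\RP^{k-1}$, it is the unique hyperplane containing both codimension-$2$ subspaces $H_i\cap H_{i+1}=\langle V_{i+1},\dots,V_{i+k-2}\rangle$ and $H_{i+k-1}\cap H_{i+k}=\langle V_{i+k},\dots,V_{i+2k-3}\rangle$, which by (iii) is $\langle V_{i+1},\dots,V_{i+k-2},V_{i+k},\dots,V_{i+2k-3}\rangle$. Meanwhile $(\Delta_k\circ F_k(V))_i$ is the hyperplane spanned by $F_k(V)_i,\dots,F_k(V)_{i+k-2}$; for each index $j$ in this range one of the two defining lines of $F_k(V)_j$ has both endpoints in the above list of $2k-4$ vertices (choose $(V_{i+1},V_{i+k})$ for $j=i$, $(V_{i+k-2},V_{i+2k-3})$ for $j=i+k-2$, and either line for intermediate $j$), so $F_k(V)_j$ lies in the target hyperplane. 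The resulting containment, combined with the fact that both sides are hyperplanes, forces equality. The main obstacle will be the combinatorial index bookkeeping in (iii) and (iv); once the corrugation relations are correctly matched to the listed $2k-4$ vertices and the choice of defining lines for $F_k(V)_j$ is made, all remaining steps are routine linear algebra.
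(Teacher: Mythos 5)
Your proposal is correct. Parts (i) and (ii) coincide with the paper's argument: the paper proves (i) for $F_k$ by exactly your observation that consecutive image vertices share the line $(V_{i+1},V_{i+k})$, resp.\ $(V_{i+k},V_{i+2k-1})$, and that these lines meet at $V_{i+k}$; your explicit verification of $F_k\circ G_k=G_k\circ F_k=S_k$ fleshes out what the paper dismisses as immediate from the figure. Parts (iii) and (iv) are where you genuinely diverge. The paper works with lifts to $\R^k$, identifies $(k-1)$-vectors with covectors via a volume form, and proves (iii) by an iterated wedge-product substitution ($\widetilde W_{i+k}\in\operatorname{span}(\widetilde V_{i+k-2}\wedge\widetilde V_{i+k}\wedge\dots\wedge\widetilde V_{i+2k-3},\widetilde W_{i+k-1})$, then a chain of reductions using the recurrence), and (iv) by an explicit coefficient computation showing $\widetilde V'_i\wedge\widetilde W''_i=0$. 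You instead reformulate corrugation of the dual as a codimension bound on $H_i\cap H_{i+1}\cap H_{i+k-1}\cap H_{i+k}$, reduce to showing the $2k-4$ vertices $V_{i+1},\dots,V_{i+k-2},V_{i+k},\dots,V_{i+2k-3}$ lie in a hyperplane, and get this by exhibiting $k-3$ independent corrugation relations (the leading-coefficient induction correctly invokes the noncollinearity hypothesis of $\caP^0_{k,n}$ to ensure each relation has nonzero coefficient at its leading vertex); your (iv) is then a pure incidence argument. Both routes are valid; your synthetic version avoids lifts and exterior algebra entirely, and in (iv) it actually checks that \emph{all} of $V'_i,\dots,V'_{i+k-2}$ lie in the target hyperplane, which is the full content of $\Delta_k(F_k(V))_i=W''_i$, whereas the paper's computation explicitly verifies only $V'_i\in W''_i$. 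What the paper's wedge-product approach buys, and yours does not, is the explicit coefficient formulas that are reused verbatim later (the proof of Theorem~\ref{corrug}(ii) proceeds ``line by line'' through the proof of (iii) to extract the sign $(-1)^k$ and the monomials in $x_i,y_i$ appearing in the formula for $\Delta_k$), so the quantitative by-product is lost in your version even though the qualitative Proposition is fully established. One small point to make explicit: in (iv) the two dual lines meet only because they are coplanar, which is exactly part (iii), and the ``unique hyperplane'' exists only when the span of the $2k-4$ vertices has dimension exactly $k-2$; both hold generically, consistent with $F_k$, $G_k^*$ being rational maps.
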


\begin{figure}[hbtp]
\centering
\includegraphics[height=1.8in]{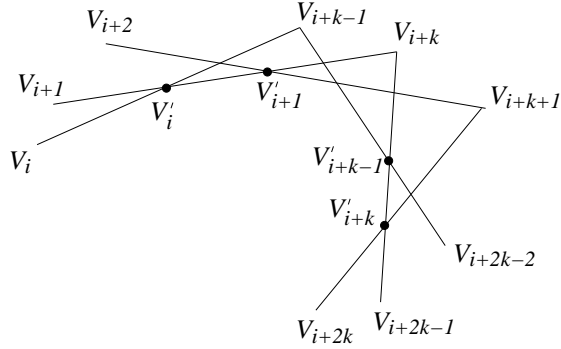}
\caption{The maps $F_k$ and $G_k$}
\label{lines}
\end{figure}

\begin{proof}
{\rm (i)} 
Let $V_i'$ denote the $i$th vertex of $F_k(V)$. We claim  that the vertices $V'_i$, $V'_{i+1}$, 
$V'_{i+k-1}$, $V'_{i+k}$ lie in a projective plane.
Indeed, $V'_i$ and $V'_{i+1}$ belong to the line $(V_{i+1},V_{i+k})$, and $V'_{i+k-1}$ and $V'_{i+k}$ to the 
line $(V_{i+2k-1},V_{i+k})$. These lines interest at point $V_{i+k}$, hence the the points $V'_i$, $V'_{i+1}$, 
$V'_{i+k-1}$, $V'_{i+k}$ are coplanar, see Fig.~\ref{lines}.

The argument for the map $G_k$ is analogous.

{\rm (ii)} Follows immediately from the definition of $F_k$ and $G_k$, see Fig.~\ref{lines}.

{\rm (iii)} Lift points in $\RP^{k-1}$ to vectors in $\R^k$; the lift is not unique and is defined up to a multiplicative factor. 
We use tilde to indicate a lift of a point. A lift of a twisted polygon is also twisted: $\widetilde V_{i+n} = \widetilde M (\widetilde V_i)$ for all $i$, where $\widetilde M \in GL(k,\R)$ is a lift of the monodromy.

Fix a volume form in $\R^k$. Then $(k-1)$-vectors are identified with covectors. In particular, if $V_i,\ldots,V_{i+k-2}$ are points in $\RP^{k-1}$ spanning a hyperplane then the respective point $W_i$ of the dual space $(\RP^{k-1})^*$ lifts to $\widetilde W_i=\widetilde V_i \wedge\dots \wedge \widetilde V_{i+k-2}$.

Let $V$ be a generic corrugated polygon. We need to prove that the $(k-1)$-vectors $\widetilde W_i$, $\widetilde W_{i+1}$, 
$\widetilde W_{i+k-1}$ and $\widetilde W_{i+k}$ are linearly dependent for all $i$. 

Since the polygon $V$ is corrugated, $\widetilde V_{i+2k-2}\in \operatorname{span}(\widetilde V_{i+2k-3},\widetilde V_{i+k-2},\widetilde V_{i+k-1})$, and hence
$$
\widetilde W_{i+k}=\widetilde V_{i+k}\wedge\dots\wedge \widetilde V_{i+2k-2}\in \operatorname{span} (\widetilde V_{i+k-2}\wedge \widetilde V_{i+k}\wedge\dots\wedge \widetilde V_{i+2k-3},\widetilde W_{i+k-1}).
$$
In its turn, 
\begin{equation*}
\begin{split}
\widetilde V_{i+k-2}\wedge \widetilde V_{i+k}\wedge\dots\wedge \widetilde V_{i+2k-3}=
\widetilde V_{i+k-3}\wedge \widetilde V_{i+k-2}\wedge \widetilde V_{i+k}\wedge\dots\wedge \widetilde V_{i+2k-4}=\\
\dots=\widetilde V_{i+1}\wedge\dots\wedge \widetilde V_{i+k-2}\wedge \widetilde V_{i+k} \in
\operatorname{span}(\widetilde V_i \wedge\dots \wedge \widetilde V_{i+k-2},\widetilde V_{i+1} \wedge\dots \wedge \widetilde V_{i+k-1}).
\end{split}
\end{equation*}
The latter two $(k-1)$-vectors are $\widetilde W_i$ and $\widetilde W_{i+1}$, as needed.

{\rm (iv)} We need to prove that $G_k^*\circ \Delta_k = \Delta_k\circ F_k$.

As before, we argue about lifted vectors. Let $\widetilde V$ be a lifted polygon, and let $W$ be the
polygon dual to $V$. 
The vertices of a lift of the polygon $W''=G^*_k(W)$ are represented by vectors in $\operatorname{span}(\widetilde W_i, \widetilde 
W_{i+1}) \cap \operatorname{span} (\widetilde W_{i+k-1}, \widetilde W_{i+k})$. Let $\widetilde W''_i$ be a vector spanning this line. Then 
$$
\widetilde W''_i = \alpha \widetilde W_i + \beta \widetilde W_{i+1} = \lambda \widetilde W_{i+k-1} + \mu\widetilde  W_{i+k}
$$
for some coefficients $\alpha, \beta, \lambda, \mu$. Using the definition of the points $W_i$, we have:
\begin{equation} \label{twoways}
\begin{split}
\widetilde W''_i = (\alpha \widetilde V_i \pm \beta \widetilde V_{i+k-1}) \wedge \widetilde V_{i+1}\wedge \ldots \wedge \widetilde V_{i+k-2} \\
=(\lambda \widetilde V_{i+k-1} \pm \mu \widetilde V_{i+2k-2}) \wedge \widetilde V_{i+k}\wedge \ldots \wedge \widetilde V_{i+2k-3}.
\end{split}
\end{equation}

On the other hand, the vertex $V'_i$ of $F_k(V)$ is the intersection point of the lines $(V_i,V_{i+k-1})$ and $(V_{i+1}, V_{i+k})$. Let $\widetilde V'_i$ be a lift of this point; then
$\widetilde V'_i = s \widetilde V_{i+1} + t \widetilde V_{i+k}$ where $s$ and $t$ are coefficients. We want to show that $\Delta_k(V'_i)=W''_i$, that is, 
using the identification of $(k-1)$-vectors with covectors, that $\widetilde V'_i \wedge \widetilde W''_i=0$. Indeed, in view of~\eqref{twoways},
\begin{equation*}
\begin{split}
\widetilde V'_i \wedge \widetilde W''_i
 = s \widetilde V_{i+1} \wedge (\alpha \widetilde V_i \pm \beta \widetilde V_{i+k-1}) \wedge \widetilde V_{i+1}\wedge \ldots \wedge \widetilde V_{i+k-2} +\\
 t \widetilde V_{i+k}\wedge (\lambda \widetilde V_{i+k-1} \pm \mu \widetilde V_{i+2k-2}) \wedge \widetilde V_{i+k}\wedge \ldots \wedge \widetilde V_{i+2k-3} =0,
\end{split}
\end{equation*}
as claimed.
\end{proof}

\begin{remark}
Let us briefly mention a natural continuous analog of corrugated polygons and the generalized higher pentagram maps. A twisted curve in $\RP^{k-1}$ is a map $\gamma: \R \to \RP^{k-1}$ such that $\gamma(x+1)=M(\gamma(x))$ for all $x$ where $M$ is a fixed projective transformation. The projective group naturally acts on twisted curves, and one considers the projective equivalence classes.

A  curve is called $c$-corrugated if the tangent lines at points $\gamma(x)$ and $\gamma(x+c)$ are coplanar (not skew) 
for all $x\in\R$.  We claim that the line connecting points $\gamma(x)$ and $\gamma(x+c)$ envelops a new curve, $\bar \gamma(x)$. Indeed, the coplanarity condition implies that 
$\gamma(x+c) - \gamma(x) = u(x) \gamma'(x) + v(x) \gamma'(x+c)$ for some functions $u$ and $v$. Then the envelope $\bar \gamma$ is given by the equation
$$
\bar \gamma(x) = \frac{u(x)}{u(x)+v(x)} \gamma(x) + \frac{v(x)}{u(x)+v(x)} \gamma(x+c),
$$
as can be easily verified by differentiation. 

Thus we obtain a map $F_c:\gamma \mapsto \bar \gamma$. It is even easier to describe a map $G_c$ defined by 
a $c$-corrugated curve $\gamma$: it is traced by the intersection points of the tangent lines at points $\gamma(x)$ and $\gamma(x+c)$. 
These maps commute with projective transformations. Of course, the notion of corrugated curve is interesting only when $k-1\ge 3$. 

An analog of Proposition~\ref{corrprop} holds; in particular, $F_c (\gamma)$ is again a $c$-corrugated curve. The dynamics of the projective equivalence classes of corrugated curves under the transformations $F_c$ and $G_c$ is an interesting subject; we do not dwell on it here.
\end{remark}

\subsubsection{Coordinates in the space of corrugated polygons} 
Now we introduce coordinates in  ${\mathcal P}^0_{k,n}$. 

\begin{proposition} \label{coord}
One can lift the vertices of a generic corrugated polygon $V$ so that, for all $i$, one has:
\begin{equation} \label{recurr}
\widetilde V_{i+k}=y_{i-1} \widetilde V_{i} + x_i \widetilde V_{i+1} + \widetilde V_{i+k-1},
\end{equation}
where $x_i$ and $y_i$ are $n$-periodic sequences. Conversely, $n$-periodic sequences $x_i$ and $y_i$ uniquely determine the projective equivalence class of a twisted corrugated $n$-gon in $\RP^{k-1}$.
\end{proposition}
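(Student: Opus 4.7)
The plan is to establish the two directions of the statement separately. For the forward direction, begin by fixing a monodromy-compatible lift, i.e., lifts $\widetilde V_i \in \R^k$ of the vertices satisfying $\widetilde V_{i+n} = \widetilde M \widetilde V_i$ for a single matrix $\widetilde M \in GL(k,\R)$. Such a lift always exists: choose $\widetilde V_1, \dots, \widetilde V_k$ arbitrarily (they span $\R^k$ by genericity of $V$), take $\widetilde M$ to be the unique linear map sending them to arbitrary lifts of $V_{n+1}, \dots, V_{n+k}$, and propagate via $\widetilde V_{i+n} := \widetilde M \widetilde V_i$. The corrugated condition then yields scalars $a_i, b_i, c_i$ with
\begin{equation*}
\widetilde V_{i+k} = a_i \widetilde V_i + b_i \widetilde V_{i+1} + c_i \widetilde V_{i+k-1},
\end{equation*}
and applying $\widetilde M$ to both sides shows they are $n$-periodic. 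The genericity hypothesis on $V\in\caP^0_{k,n}$ (no three of $V_i, V_{i+1}, V_{i+k-1}, V_{i+k}$ are collinear) forces $a_i, b_i, c_i\ne 0$.

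Next, I would rescale $\widetilde V_i \mapsto \lambda_i \widetilde V_i$ to normalize the coefficient in front of $\widetilde V_{i+k-1}$ to $1$. This requires $\lambda_{i+k}/\lambda_{i+k-1} = c_i$, a one-step multiplicative recurrence determining $\lambda_j$ from finite initial data. Because the $c_i$ are $n$-periodic, the solution satisfies $\lambda_{i+n}/\lambda_i = t := \prod_{m=1}^n c_m$ for all $i$; hence the rescaled lift remains twisted, simply with new monodromy $t\widetilde M$. Reading off the remaining coefficients and setting $x_i := b_i \lambda_{i+1}/\lambda_{i+k}$ and $y_{i-1} := a_i \lambda_i/\lambda_{i+k}$ produces $n$-periodic sequences satisfying \eqref{recurr}, where $n$-periodicity follows from the $n$-periodicity of $a, b, c$ combined with the quasi-periodicity of $\lambda$.

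For the converse, I would take arbitrary $n$-periodic sequences $(x_i), (y_i)$, fix any basis $\widetilde V_1, \dots, \widetilde V_k$ of $\R^k$, and extend to all of $\Z$ by \eqref{recurr}. To exhibit the monodromy, let $\widetilde M$ be the unique linear map with $\widetilde M \widetilde V_i = \widetilde V_{i+n}$ for $1 \le i \le k$, and verify $\widetilde V_{j+n} = \widetilde M \widetilde V_j$ for every $j \in \Z$ by induction on $|j|$, using $n$-periodicity of $x, y$ to commute $\widetilde M$ past \eqref{recurr}. The projected sequence is then twisted, and corrugated by the very form of \eqref{recurr}. For uniqueness of the projective class, note that a different choice of initial basis $\widetilde V'_1, \dots, \widetilde V'_k$ differs from the first by a single linear map $L$, and covariance of \eqref{recurr} forces $\widetilde V'_j = L \widetilde V_j$ for all $j$, so the two polygons are related by the projective transformation induced by $L$.

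The main obstacle is checking that the rescaling normalizing $c_i=1$ is compatible with the monodromy. Naively, an honest $n$-periodic solution to $\lambda_{i+k}/\lambda_{i+k-1} = c_i$ would require $\prod_m c_m = 1$, which need not hold. The key observation is that the lift only needs to be twisted, not $n$-periodic, so the global factor $t = \prod_m c_m$ is harmless: it is absorbed into a rescaling of $\widetilde M$, and the coordinates $x_i, y_{i-1}$ depend only on ratios of consecutive $\lambda$'s in which this factor cancels.
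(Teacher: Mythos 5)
Your proposal is correct and follows essentially the same route as the paper: write the corrugated relation for an arbitrary twisted lift with $n$-periodic nonvanishing coefficients, rescale the lift by a quasi-periodic factor $\lambda_i$ to normalize the coefficient of $\widetilde V_{i+k-1}$ to $1$, and observe that the resulting $x_i,y_i$ are ratios in which the quasi-periodicity constant cancels; the converse via a choice of frame and the recurrence is also identical. Your explicit remark that the global factor $\prod c_m$ is absorbed into the monodromy is a point the paper leaves implicit, but it is the same argument.
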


\begin{proof} 
Consider a lifted  twisted polygon $\widehat V$. Since $V$ is corrugated, one has
\begin{equation} \label{recurrgen}
\widehat V_{i+k}=a_{i+k-1} \widehat V_{i+k-1} + b_{i+1} \widehat V_{i+1} + c_i \widehat V_i
\end{equation}
for all $i$. The sequences $a_i, b_i$ and $c_i$ are $n$-periodic and, due to the genericity assumption, none of these coefficients vanish.

We wish to choose the lift in such a way that the coefficient $a$  identically equals~1. Rescale:
$\widehat V_i = \lambda_i \widetilde V_i,$
where $\lambda_i \neq 0$. Then for (\ref{recurrgen}) to become (\ref{recurr}), the following recurrence should hold for the scaling factors:
$\lambda_{i+1}=a_i \lambda_i$.
Set $\lambda_0=1$ and determine $\lambda_i$,  $i\in\Z$, by the recurrence. 

After this rescaling, the coefficients change as follows:
$$
c_i \mapsto \frac{c_i}{a_i a_{i+1}\cdots a_{i+k-1}},\qquad b_{i+1} \mapsto \frac{b_{i+1}}{a_{i+1}\cdots a_{i+k-1}}.
$$ 
Hence
\begin{equation} \label{newxy}
x_i=\frac{b_{i+1}}{a_{i+1}\cdots a_{i+k-1}},\qquad y_i=\frac{c_{i+1}}{a_{i+1}\cdots a_{i+k}}.
\end{equation}
Thus we obtain recurrence~\eqref{recurr} with $n$-periodic coefficients uniquely determined by the projective equivalence class of the twisted corrugated polygon.

Conversely, given $n$-periodic sequences $x_i$ and $y_i$, choose a frame $\widetilde V_0,\ldots, \widetilde V_{k-1}$ in $\R^k$ and use recurrence~\eqref{recurr} to construct a bi-infinite sequence of vectors $\widetilde V_i$. The periodicity of the sequences $x_i$ and $y_i$ implies that the polygon $\widetilde V$ is twisted, and relation~\eqref{recurr} implies that it is corrugated. A different choice of a frame results in a linearly equivalent polygon and, after projection to $\RP^{k-1}$, in a projectively equivalent polygon $V$. 
\end{proof}
 
The next theorem interprets the map $T_k$ as the generalized higher pentagram  map $F_k$.

\begin{theorem} \label{corrug}

{\rm (i)} In the $({\bf x},{\bf y})$-coordinates, the maps $F_k$ and $G_k$ coincide with $T_k$ and $T_k^{-1}$ up to a shift of indices. 
More exactly, if $S_t:{\mathcal P}^0_{k,n}\to {\mathcal P}^0_{k,n}$ is the shift by $t$ in the positive direction, then
$F_k=T_k\circ S_{r'+1}$ and $G_k=T_k^{-1}\circ S_{r+1}$.

{\rm (ii)} In the $({\bf x},{\bf y})$-coordinates, the projective duality $\Delta_k$ coincides with $D_k$ up to a sign and a shift of indices.
More exactly, $\Delta_k=(-1)^k D_k\circ S_{r'}$. 
\end{theorem}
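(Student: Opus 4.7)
\medskip

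\noindent\emph{Plan.} For part (i), the key idea is that the canonical lift of Proposition~\ref{coord} essentially hands us a lift of $F_k(V)$ for free. Rewrite the recurrence as $\widetilde V_{i+k} - x_i \widetilde V_{i+1} = y_{i-1}\widetilde V_i + \widetilde V_{i+k-1}$; the common vector on both sides is a lift $\widehat V'_i$ of the intersection point $V'_i = F_k(V)_i$ of the two diagonals $(V_i, V_{i+k-1})$ and $(V_{i+1}, V_{i+k})$. I first establish the telescoping identity
\[
\widehat V'_{j+1} - \widehat V'_j = \sigma_j \widetilde V_{j+1}, \qquad \sigma_j := x_j + y_j,
\]
by a one-line computation using the original recurrence. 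I then apply this identity at $j = i+k-1$ and $j = i$ and use $\widetilde V_{i+k-1} = \widehat V'_i - y_{i-1}\widetilde V_i$ to eliminate $\widetilde V_{i+k-1}$ and $\widetilde V_{i+1}$ from $\widehat V'_{i+k} - \widehat V'_{i+k-1} = \sigma_{i+k-1}\widetilde V_{i+k}$, arriving at
\[
\widehat V'_{i+k} = \frac{\sigma_{i+k-1}y_i}{\sigma_i}\widehat V'_i + \frac{\sigma_{i+k-1}x_i}{\sigma_i}\widehat V'_{i+1} + \widehat V'_{i+k-1}.
\]
Since the coefficient of $\widehat V'_{i+k-1}$ is already $1$, no further rescaling is needed: $\widehat V'$ is a canonical lift of $F_k(V)$ with coordinates $x'_i = \sigma_{i+k-1}x_i/\sigma_i$ and $y'_i = \sigma_{i+k}y_{i+1}/\sigma_{i+1}$. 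Matching against \eqref{mapxy} via the shift $i \mapsto i-r'-1$ gives $F_k = T_k \circ S_{r'+1}$.

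\medskip

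\noindent For the $G_k$ statement, I read off from Fig.~\ref{lines} that $G_k\circ F_k = S_k$: indeed the $i$th vertex of $F_k(V)$ lies on the line $(V_{i+1}, V_{i+k})$, the $(i+k-1)$st lies on $(V_{i+k}, V_{i+2k-1})$, and these lines meet at $V_{i+k}$. Combined with $k = (r'+1)+(r+1)$ and the commutativity $T_k \circ S_t = S_t \circ T_k$, one gets $G_k = S_k\circ F_k^{-1} = S_{k-r'-1}\circ T_k^{-1} = T_k^{-1}\circ S_{r+1}$.

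\medskip

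\noindent For part (ii), I lift the dual polygon by $\widehat W_i := \widetilde V_i \wedge \widetilde V_{i+1} \wedge \cdots \wedge \widetilde V_{i+k-2}$ and work in the basis $e_l := \widetilde V_{i+l}$, $l=0,\ldots,k-1$, of $\R^k$. Iterating the recurrence shows
\[
\widetilde V_{i+k+l} = y_{i-1}e_0 + \sigma_i e_1 + \cdots + \sigma_{i+l-1} e_l + x_{i+l} e_{l+1} + e_{k-1}
\]
for $0 \le l \le k-3$ (at $l=k-2$ the last two terms combine into $\sigma_{i+k-3}e_{k-2} + (1+x_{i+k-2})e_{k-1}$). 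Row-reducing by consecutive differences puts the defining matrices of $\widehat W_{i+k}$ and $\widehat W_{i+k-1}$ into staircase form, with row $l$ becoming $y_{i+l-1}e_l + x_{i+l}e_{l+1}$; a combinatorial expansion in the basis $f_m := e_0\wedge\cdots\wedge\hat e_m\wedge\cdots\wedge e_{k-1}$ of $\Lambda^{k-1}(\R^k)$, using $\widehat W_i = f_{k-1}$ and $\widehat W_{i+1} = f_0$, then yields
\[
\widehat W_{i+k} = \Bigl(\prod_{j=i-1}^{i+k-3}y_j\Bigr)\widehat W_i + (-1)^k\Bigl(\prod_{j=i}^{i+k-3}y_j\Bigr)\widehat W_{i+1} + (-1)^k x_{i+k-2}\widehat W_{i+k-1}.
\]
Rescaling $\widetilde W_i = \mu_i \widehat W_i$ with $\mu_{j+1}/\mu_j = (-1)^k/x_{j-1}$ puts this in the canonical form of Proposition~\ref{coord} and yields $\bar x_i = (-1)^k x_{i+k-2}^{-1}\prod_{j=i}^{i+k-3}(y_j/x_j)$ and $\bar y_i = (-1)^k x_{i+k-1}^{-1}\prod_{j=i}^{i+k-2}(y_j/x_j)$, which matches $(-1)^k D_k\circ S_{r'}$ read off from \eqref{mapdk}. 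The main obstacle is the bookkeeping of the sign $(-1)^k$: it enters once from reordering $e_{k-1}$ past $k-2$ factors when expanding $\widehat W_{i+k-1}$, propagates through the staircase expansion of $\widehat W_{i+k}$, and finally reappears in $\mu_{i+k}/\mu_i$ after an odd power of $(-1)^k$ cancellation, so every step must be checked carefully.
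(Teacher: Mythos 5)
Your proof is correct and follows essentially the same route as the paper's: in (i) both arguments use the canonical lift $\widetilde V'_j=\widetilde V_{j+k}-x_j\widetilde V_{j+1}=y_{j-1}\widetilde V_j+\widetilde V_{j+k-1}$ and extract the three-term recurrence for $F_k(V)$ with coefficient $1$ already in place (your telescoping identity is a repackaging of the paper's coefficient-matching via linear independence of $\widetilde V_{i+1},\widetilde V_{i+k},\widetilde V_{i+2k-1}$), with $G_k$ then handled through Proposition~\ref{corrprop}(ii); in (ii) both derive the recurrence $\widetilde W_{i+k}=y_{i-1}\cdots y_{i+k-3}\,\widetilde W_i+(-1)^k y_i\cdots y_{i+k-3}\,\widetilde W_{i+1}+(-1)^k x_{i+k-2}\,\widetilde W_{i+k-1}$ for the wedge lift of the dual polygon and normalize via~\eqref{newxy}. (Your coefficient of $\widehat W_i$ is in fact the correct one: the paper's displayed $y_{i+1}\cdots y_{i+k-3}$ at that spot is a typo for $y_{i-1}\cdots y_{i+k-3}$, as substituting its own two preceding displays shows.)
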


\begin{proof}
 (i) Let $\widetilde V$ be a  polygon in $\R^k$ satisfying~\eqref{recurr}. Then
 \begin{equation*}
 \widetilde V_{j+k-1}+ y_{j-1} \widetilde V_j   = \widetilde V_{j+k} -x_j \widetilde V_{j+1} 
\end{equation*}
for $j=i, i+1, i+k-1, i+k$. 
It follows that, as lifts of points $V'_i, V'_{i+1}, V'_{i+k-1}, V'_{i+k}$ in Fig.~\ref{lines}, one may set:
 \begin{equation} \label{foureq}
 \begin{aligned}
\widetilde V'_i=\widetilde V_{i+k} -x_i \widetilde V_{i+1}, &\qquad 
\widetilde V'_{i+1} =\widetilde V_{i+k}+ y_{i} \widetilde V_{i+1},\\
\widetilde  V'_{i+k-1} = \widetilde V_{i+2k-1} -x_{i+k-1} \widetilde V_{i+k}, &\qquad
\widetilde V'_{i+k} = \widetilde V_{i+2k-1}+ y_{i+k-1} \widetilde V_{i+k}.
\end{aligned}
\end{equation}

One has a linear relation
$$
\widetilde V'_{i+k} = a \widetilde  V'_{i+k-1} + b \widetilde V'_{i+1} + c\widetilde V'_i. 
$$
Substitute from~\eqref{foureq} to obtain
$$
\widetilde V_{i+2k-1}+ y_{i+k-1} \widetilde V_{i+k} = a (\widetilde V_{i+2k-1} -x_{i+k-1} \widetilde V_{i+k}) + b (\widetilde V_{i+k}+ y_{i} \widetilde V_{i+1}) + c (\widetilde V_{i+k} -x_i \widetilde V_{i+1})
$$
and use linear independence of the vectors $\widetilde V_{i+1}, \widetilde V_{i+k}, \widetilde V_{i+2k-1}$ to conclude that
$$
a=1,\qquad b=x_i \frac{x_{i+k-1}+y_{i+k-1}}{x_i+y_i},\qquad c=y_i \frac{x_{i+k-1}+y_{i+k-1}}{x_i+y_i}.
$$
Thus the vectors $\widetilde V'_i, \widetilde V'_{i+1}, \widetilde  V'_{i+k-1}, \widetilde V'_{i+k}$ satisfy 
recurrence~\eqref{recurr} with the coefficients
$$
x'_i=x_i \frac{x_{i+k-1}+y_{i+k-1}}{x_i+y_i},\qquad y'_i=y_{i+1} \frac{x_{i+k}+y_{i+k}}{x_{i+1}+y_{i+1}}.
$$
This differs from~\eqref{mapxy} only by shifting indices by $r'+1$.
 
The statement about $G_k$ follows immediately from  $F_k=T_k\circ S_{r'+1}$ and Proposition~\ref{corrprop}(ii).
 
{\rm (ii)}  We use the same notation as in Proposition \ref{corrprop}. Let $\widetilde V$ be a polygon in $\R^k$ 
satisfying~\eqref{recurr}, and $\widetilde W$ be a lift of the dual polygon.

Going by the proof of Proposition~\ref{corrprop}(iii), line by line, we obtain the equalities:
$$
\widetilde W_{i+k}=(-1)^k y_{i+k-3}  \widetilde V_{i+k-2}\wedge \widetilde V_{i+k}\wedge\dots\wedge \widetilde V_{i+2k-3} + (-1)^k x_{i+k-2} \widetilde W_{i+k-1},  
$$   
and
$$
\widetilde V_{i+k-2}\wedge \widetilde V_{i+k}\wedge\dots\wedge \widetilde V_{i+2k-3} = (-1)^k y_{i-1} \cdots y_{i+k-4} 
\widetilde W_i + y_i \cdots y_{i+k-4} \widetilde W_{i+1}.
$$   
Hence
$$
\widetilde W_{i+k}=(-1)^k x_{i+k-2} \widetilde W_{i+k-1} + (-1)^k y_i \cdots y_{i+k-3} \widetilde W_{i+1} + y_{i+1} \cdots y_{i+k-3} \widetilde W_i.
$$    
Using  formulas~\eqref{newxy}, we find
$$
x_i^*= (-1)^k \frac{y_i \cdots y_{i+k-3}}{x_i \cdots x_{i+k-2}},\qquad y_i^*= (-1)^k \frac{y_i \cdots y_{i+k-2}}{x_i \cdots x_{i+k-1}},
$$   
which differs from~\eqref{mapdk} only by the sign and the shift of indices by $r'$.
\end{proof}

\begin{remark}
1. In view of Theorem~\ref{corrug} and Proposition~\ref{tinv}(i,ii), we may identify the maps $T_k^\circ$ and $G_k^*$.

2. It would be interesting  to provide a geometric interpretation for Proposition~\ref{tinv}(iii), which connects the maps
$G_k^*$ and $F_{n-k+2}$.
\end{remark}

Statement (i) of Theorem~\ref{corrug}, along with Theorem~\ref{main}, implies that  the generalized higher pentagram map $F_k$ is completely integrable. 

Integrals of the pentagram map~\eqref{pentaformula} were constructed by R. Schwartz in~\cite{Sch3}. He observed that the map 
commutes with the scaling
$$
X_i \mapsto \lambda X_i, \quad Y_i \mapsto \lambda^{-1} Y_i,
$$
and that the conjugacy class of the monodromy of a twisted polygon is invariant under the map. Decomposing  the characteristic polynomial of the monodromy into the homogeneous components with respect to the scaling, yields the integrals.  

The next proposition shows that the integrals of the generalized higher pentagram map can be constructed in a similar way.

\begin{proposition}\label{intviamono}
The integrals of $F_k$ can be obtained from the monodromy $M$ as the homogeneous components of its characteristic polynomial. 
\end{proposition}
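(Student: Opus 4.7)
The plan is to adapt R.~Schwartz's original reasoning for the pentagram map. First I would observe that $F_k$ is a projectively natural construction built from intersections of $(k-1)$-diagonals, so it commutes with every projective transformation. Applied to the monodromy $M \in \PGL(k,\R)$ of a twisted polygon $V$ satisfying $V_{i+n}=M(V_i)$, this gives $F_k(V)_{i+n} = M(F_k(V)_i)$, so the projective conjugacy class of $M$ is preserved by $F_k$. Next, I would pass to the canonical lift of $V$ from Proposition~\ref{coord}: if $\widetilde V_i \in \R^k$ satisfy $\widetilde V_{i+k} = y_{i-1}\widetilde V_i + x_i\widetilde V_{i+1} + \widetilde V_{i+k-1}$ and we collect consecutive lifted vertices into the $k\times k$ matrix $U_i = [\widetilde V_i \mid \cdots \mid \widetilde V_{i+k-1}]$, the recurrence becomes $U_{i+1} = U_i B_i$ with a companion-type matrix
\[
B_i = \begin{pmatrix} 0 & 0 & \cdots & 0 & y_{i-1} \\ 1 & 0 & \cdots & 0 & x_i \\ 0 & 1 & \cdots & 0 & 0 \\ \vdots & & \ddots & & \vdots \\ 0 & 0 & \cdots & 1 & 1 \end{pmatrix}.
\]
Hence the matrix lift $\widetilde M = U_0 (B_0\cdots B_{n-1}) U_0^{-1}$ of $M$ is a polynomial function of $(\boxx,\boy)$, and its characteristic polynomial $\det(\mu I-\widetilde M)$ is $F_k$-invariant up to the $\R^*$-ambiguity in the choice of matrix lift of the projective monodromy.

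Next I would exploit the scaling action $(\boxx,\boy)\mapsto(t\boxx,t\boy)$, which commutes with $F_k$. Only the last column of $B_i$ picks up a factor of $t$, so $\widetilde M(t\boxx,t\boy)$ is a matrix polynomial in $t$ and the characteristic polynomial expands as
\[
\det(\mu I - \widetilde M(t\boxx,t\boy)) = \sum_{j,s} E_{j,s}(\boxx,\boy)\,\mu^j t^s,
\]
with the scaling ambiguity in the lift of $M$ reducing to an overall monomial prefactor in $t$ (the determinant of $B_0\cdots B_{n-1}$ is an explicit product of $y_i$'s, again a monomial under the scaling). Dividing out this prefactor turns each bi-homogeneous component $E_{j,s}$ into a well-defined rational function on the space of corrugated polygons, and by the previous paragraph every $E_{j,s}$ is $F_k$-invariant. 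This proves the proposition as stated: the integrals of $F_k$ arise as the $(\mu,t)$-bi-homogeneous components of the characteristic polynomial of the monodromy.

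To close the loop and identify the $E_{j,s}$ with the integrals $I_{ij}$ from~\eqref{integrals}, I would compare the transfer matrix $B_0\cdots B_{n-1}$ with the Lax product $M_{k,n}(\lambda) = L_1(\lambda)\cdots L_n(\lambda)$ built from~\eqref{Laxmat}. Both $B_i$ and $L_i(\lambda)$ are $k\times k$ matrices of the same combinatorial type (a shifted-identity block together with a distinguished column carrying the coefficients of the recurrence), and I would look for a diagonal gauge transformation combined with a cyclic relabeling of indices and a substitution identifying $\lambda$ with a suitable power of $t$ that brings one transfer matrix into the other. Under such an identification the bi-graded expansion of $\det(\mu I - \widetilde M(t\boxx,t\boy))$ would match the expansion~\eqref{integrals} of $\det(I+zM_{k,n}(\lambda))$. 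I expect the main obstacle to be carrying out this explicit matching of the two transfer matrices; the rest of the argument is structural and is forced by the naturality of $F_k$ under $\PGL(k,\R)$ together with the $t$-scaling symmetry.
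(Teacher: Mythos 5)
Your proposal stops exactly where the paper's proof begins. The step you defer to the end and flag as ``the main obstacle'' --- matching the transfer matrix of the recurrence~\eqref{recurr} with the network Lax matrix $M_{k,n}(\lambda)=L_1(\lambda)\cdots L_n(\lambda)$ --- is not a peripheral verification but the entire content of the argument: the paper reduces the proposition, via Theorem~\ref{corrug}, \eqref{integrals} and Proposition~\ref{invariants}, to checking that $M_{k,n}(\lambda)$ is conjugate--transpose to the scaled monodromy $M(\lambda)=Q_n(\lambda)\cdots Q_1(\lambda)$, where $Q_i(\lambda)$ is the companion matrix of~\eqref{recurr} with the grading parameter $\lambda$ inserted (your $B_i$ is $Q_i(1)^T$), and then proves the explicit identity $L_i(-\lambda)^T=A_{i+1}(\lambda)^{-1}Q_{i+1}(\lambda)A_i(\lambda)$. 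Your sketched recipe for this matching --- a diagonal gauge combined with a cyclic relabeling and a substitution for $\lambda$ --- would not succeed as stated: the actual identification requires a transposition, the sign change $\lambda\mapsto-\lambda$ forced by the sign conventions behind~\eqref{Laxmat}, an index shift, and a conjugating matrix $A_i(\lambda)$ that is \emph{not} diagonal (it carries an off-diagonal entry $x_i$ in its bottom-left corner). Since ``the integrals of $F_k$'' in the statement means the specific family $I_{ij}$ of~\eqref{integrals}, leaving this identification unproved leaves the proposition unproved.

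A secondary issue is in the part you do carry out. Invariance of the bi-homogeneous components is argued from projective naturality of $F_k$ plus the $t$-scaling, but you dispose of the ambiguity in the matrix lift of the monodromy by asserting it ``reduces to an overall monomial prefactor in $t$'' without computing how the canonical normalization of Proposition~\ref{coord} for $F_k(V)$ relates to that for $V$. One must check (as is implicit in the proof of Theorem~\ref{corrug}(i), where the leading coefficient $a$ comes out equal to $1$) that the canonical lifts are compatible, so that the lifted monodromies are honestly conjugate rather than conjugate up to an undetermined scalar; otherwise each component $E_{j,s}$ is only invariant up to a power of that scalar. In the paper this whole discussion is unnecessary, because invariance of the $I_{ij}$ is already supplied by the network argument of Proposition~\ref{invariants}, and only the identification with the monodromy remains to be proved.
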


\begin{proof} By Theorem~\ref{corrug} and~\eqref{integrals}, it suffices to check that 
the matrix $M_{k,n}(\lambda)$ given by~\eqref{Mmatrix} is conjugate-transpose to the scaled monodromy $M(\lambda)$ of the respective twisted corrugated polygon.

Let us start with the monodromy.  We use vectors $\widetilde V_1,\dots,\widetilde V_k$ as a basis in the vector space $\R^k$, 
and express the next vectors using~\eqref{recurr}. When we come to $\widetilde V_{n+1},\dots,\widetilde V_{n+k}$, we get the monodromy matrix as a function of the coordinates ${\bf x}, {\bf y}$. This process is represented by the product of matrices $Q_i(\lambda)$ that encode one step $(\widetilde V_i\dots,\widetilde V_{i+k-1}) \mapsto (\widetilde V_{i+1}\dots,\widetilde V_{i+k})$. By~\eqref{recurr}
\begin{equation*}
Q_i(\lambda)=\begin{pmatrix}
0 & 1 & 0 & \dots & 0 & 0 \cr
0 & 0 & 1 & \dots & 0 & 0\cr
\dots & \dots & \dots & \dots & \dots & \dots \cr
0 & 0 & 0 & \dots & 0 & 1\cr
\lambda y_{i-1} & \lambda x_i & 0 &  \dots & 0 & 1
\end{pmatrix},
\end{equation*}
where $\lambda$ is the scaling factor,
and $M(\lambda)=Q_n(\lambda)Q_{n-1}(\lambda)\cdots Q_1(\lambda)$. 

Now consider $M_{k,n}(\lambda)^T$: by~\eqref{Mmatrix}, it is the product of $L_i(\lambda)^T$, $i=1,\dots,n$, in the reverse order,
with $L_i(\lambda)$ given by~\eqref{Laxmat}. Let $A_i(\lambda)$ be an auxiliary matrix
\begin{equation*}
A_i(\lambda)=\begin{pmatrix}
\lambda^{-1} & 0 & 0&\dots & 0 & 0 \cr
0 & 1 & 0 &\dots & 0 & 0\cr
0 & 0 & 1 &\dots & 0 & 0\cr
\dots & \dots & \dots & \dots & \dots & \dots \cr
x_i & 0 & 0 &  \dots & 0 & 1
\end{pmatrix},
\end{equation*}
then $L_i(-\lambda)^T = A_{i+1}(\lambda)^{-1} Q_{i+1}(\lambda) A_i(\lambda)$, and hence 
$$
M_{k,n}(-\lambda)^T=A_{n+1}(\lambda)^{-1}Q_{n+1}(\lambda)q_n(\lambda)\cdots Q_2(\lambda)A_1(\lambda). 
$$
Taking into account that sequences
$Q_i(\lambda)$ and $A_i(\lambda)$ are $n$-periodic by Proposition~\ref{coord}, we get
$$
M_{k,n}(-\lambda)^T= A_1(\lambda)^{-1}Q_1(\lambda)M(\lambda)Q_1(\lambda)^{-1}A_1(\lambda),
$$
 and the claim follows.
\end{proof}

We complete the discussion of coordinates in the space of corrugated polygons  by showing that 
the coordinates $(p_i, q_i)$ introduced in Section~\ref{pq} can be interpreted as cross-ratios of quadruples of collinear points. We use the following definition of  cross-ratio (out of six possibilities):
\begin{equation} \label{crs}
[a,b,c,d]=\frac{(a-b)(c-d)}{(a-d)(b-c)}.
\end{equation}

To a twisted corrugated $n$-gon $V$ we assign two $n$-periodic sequences of cross-ratios. Consider 
Fig.~\ref{lines} and Fig.~\ref{colines} (depicting the map $G_k$) and observe quadruples of collinear points. Their cross-ratios are related to the coordinates $(p_i,q_i)$ as follows.

\begin{figure}[hbtp]
\centering
\includegraphics[height=1.3in]{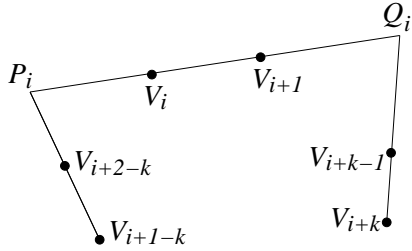}
\caption{Cross-ratios $q_i$}
\label{colines}
\end{figure}

\begin{proposition}
One has:
$$
p_i=[V_{i+1},V_i',V_{i+k},V_{i+1}'],\qquad q_{i-r-1}=[P_i,V_{i+1},Q_i,V_i]
$$
subject to $\prod_{i=1}^n p_iq_i=1$.
\end{proposition}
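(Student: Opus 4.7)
\bigskip

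\noindent\textbf{Proof proposal.} The plan is to compute both cross-ratios by lifting to $\R^k$, using the explicit formulas already established in the proof of Theorem~\ref{corrug}, and then to deduce $\prod p_iq_i=1$ directly from Proposition~\ref{weights}.

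For the first identity I would use the four lifts recorded in~\eqref{foureq}. Since by definition of $F_k$ both $V'_i$ and $V'_{i+1}$ lie on the line $(V_{i+1},V_{i+k})$, the quadruple $V_{i+1},V'_i,V_{i+k},V'_{i+1}$ is collinear and the cross-ratio makes sense. Parametrizing the line by affine coordinate $s=\alpha/\beta$ for points of the form $\alpha \widetilde V_{i+1}+\beta \widetilde V_{i+k}$, the four points have coordinates $s=\infty,\,-x_i,\,0,\,y_i$ respectively, and formula~\eqref{crs} in the limit $s=\infty$ gives
$$
[V_{i+1},V'_i,V_{i+k},V'_{i+1}]=\frac{-y_i}{-x_i-0}\cdot\frac{1}{1}\bigg|_{\text{suitable limit}}=\frac{y_i}{x_i}=p_i,
$$
where the last equality uses Proposition~\ref{weights}(ii).

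For the second identity the key is to identify $P_i$ and $Q_i$ as the two intersections of the line $(V_i,V_{i+1})$ with the two $(k-1)$-diagonal lines coplanar with it: the ``forward'' line $(V_{i+k-1},V_{i+k})$ (coplanar because $V$ is corrugated at index $i$) and the ``backward'' line $(V_{i-k+1},V_{i-k+2})$ (coplanar because $V$ is corrugated at index $i-k+1$). From~\eqref{recurr} one has
$$
\widetilde V_{i+k}-\widetilde V_{i+k-1}=y_{i-1}\widetilde V_i+x_i\widetilde V_{i+1},
$$
so the forward intersection projects to $[y_{i-1}\widetilde V_i+x_i\widetilde V_{i+1}]$; shifting $i\mapsto i-k+1$ in~\eqref{recurr} gives
$$
\widetilde V_{i+1}-\widetilde V_i=y_{i-k}\widetilde V_{i-k+1}+x_{i-k+1}\widetilde V_{i-k+2},
$$
so the backward intersection projects to $[-\widetilde V_i+\widetilde V_{i+1}]$. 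Parametrizing $(V_i,V_{i+1})$ by $s=\alpha/\beta$ on $\alpha \widetilde V_i+\beta \widetilde V_{i+1}$, the four points $P_i,V_{i+1},Q_i,V_i$ sit at $s=-1,\,0,\,y_{i-1}/x_i,\,\infty$ (this is the labelling read off Fig.~\ref{colines}; I expect the matching of $P_i,Q_i$ to the forward/backward intersections to be fixed by the picture). Taking the limit $s\to\infty$ in~\eqref{crs} then yields
$$
[P_i,V_{i+1},Q_i,V_i]=\frac{(-1)-0}{0-y_{i-1}/x_i}=\frac{x_i}{y_{i-1}},
$$
and by Proposition~\ref{weights}(ii) this equals $q_{i-r-1}$.

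Finally, the normalization $\prod_{i=1}^n p_iq_i=1$ is automatic from Proposition~\ref{weights}(ii): substituting $p_i=y_i/x_i$ and $q_i=x_{i+r+1}/y_{i+r}$ and telescoping the cyclic products of $x$'s and $y$'s yields $1$. The only delicate point in the whole argument is making sure that the labels $P_i$ and $Q_i$ in Fig.~\ref{colines} correspond to the forward and backward intersections in the order that produces $x_i/y_{i-1}$ rather than its reciprocal; once the figure convention is fixed, the rest is a routine cross-ratio computation on the projectivized line.
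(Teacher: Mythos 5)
Your proposal is correct and follows essentially the same route as the paper: lift to $\R^k$, use the expressions $\widetilde V'_i=\widetilde V_{i+k}-x_i\widetilde V_{i+1}$, $\widetilde V'_{i+1}=\widetilde V_{i+k}+y_i\widetilde V_{i+1}$ and $P_i=\widetilde V_{i+1}-\widetilde V_i$, $Q_i=y_{i-1}\widetilde V_i+x_i\widetilde V_{i+1}$, compute the two cross-ratios in an affine coordinate on each line to get $y_i/x_i$ and $x_i/y_{i-1}$, and invoke Proposition~\ref{weights}(ii). The only (welcome) extra is that you derive $P_i$ and $Q_i$ from the recurrence~\eqref{recurr} as intersections with the forward and backward diagonals, where the paper simply reads these lifts off Fig.~\ref{colines}; your worry about the labelling convention is resolved exactly as you guessed, and the resulting values match the paper's.
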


\begin{proof}
Consider the lifted polygons. We saw in the proof of Theorem~\ref{corrug} that
$$
\widetilde V'_i=\widetilde V_{i+k} -x_i \widetilde V_{i+1},\quad \widetilde V'_{i+1} =\widetilde V_{i+k}+ y_{i} \widetilde V_{i+1}.
$$
Choose the basis in the 2-plane so that 
$
\widetilde V_{i+1}=(1,0),\quad \widetilde V_{i+k}=(0,1).
$
Then $\widetilde V'_i=(-x_i,1)$, $\widetilde V'_{i+1} =(y_i,1)$, and 
$$
[V_{i+1},V'_i,V'_{i+1},V_{i+k}] =[\infty,-x_i,0,y_i]=\frac{y_i}{x_i} = p_i,
$$
the last equality due to Proposition~\ref{weights}(ii).

Likewise, one has
$$
P_i=\widetilde V_{i+1} - \widetilde V_i,\qquad Q_i=x_i\widetilde V_{i+1} + y_{i-1} \widetilde V_i
$$
in Fig.~\ref{colines}, and this yields
$$
[P_i,V_{i+1},Q_i,V_i]=\frac{x_i}{y_{i-1}}=q_{i-r-1},
$$
the last equality again due to Proposition~\ref{weights}(ii).

The condition on the product of all coordinates follows immediately.
\end{proof}

\subsubsection{ Higher pentagram maps on plane polygons} 
One  also has the {\it skip $(k-2)$-diagonal map\/} on twisted polygons in the projective plane.  Assume that the polygons are generic in the following sense: for every $i$, no three out of the four vertices $V_i,V_{i+1},V_{i+k-1}$ and $V_{i+k}$ are collinear. The skip $(k-2)$-diagonal map assigns to a twisted $n$-gon $V$ the twisted $n$-gon whose consecutive vertices are the intersection points of the lines $(V_i,V_{i+k-1})$ and $(V_{i+1},V_{i+k})$. We call these maps {\it higher pentagram maps\/} and denote them by 
$\bar F_k$. Assume that the ground field is $\C$.

Arguing as in the proof of Proposition \ref{coord}, we lift the points $V_i$ to vectors $\widetilde V_i\in \C^3$ so 
that~\eqref{recurr} holds. This provides a rational map $\psi$ from the moduli space of twisted polygons in $\CP^2$ to the $({\bf x,y})$-space, that is, to the moduli space of corrugated twisted polygons in $\CP^{k-1}$. On the  latter space, the generalized higher pentagram map $F_k$ acts.  The relation between these maps is as follows.

\begin{proposition} \label{diags}
{\rm (i)} The map $\psi$ is $k\choose 3$-to-one.

{\rm (ii)} The map $\psi$ conjugates $\bar F_k$ and $F_k$, that is, $\psi\circ \bar F_k=F_k\circ \psi$.

\end{proposition}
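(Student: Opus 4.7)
The plan for part (i) is to set up a bijection between $\psi^{-1}(\boxx,\boy)$ and the $\widetilde M$-invariant $(k-3)$-planes in $\C^k$, where $\widetilde M\in GL(k)$ denotes the monodromy of the unique (by Proposition~\ref{coord}) corrugated polygon $V^{\mathrm{corr}}$ in $\CP^{k-1}$ with coordinates $(\boxx,\boy)$, realized by a $\C^k$-lift $\widetilde V^{\mathrm{corr}}_i$ that satisfies~\eqref{recurr}. For any surjective linear map $\pi\colon\C^k\to\C^3$ the projected vectors $\pi(\widetilde V^{\mathrm{corr}}_i)$ again satisfy~\eqref{recurr} with the \emph{same} coefficients, and they descend to a twisted polygon in $\CP^2$ precisely when $\ker\pi$ is $\widetilde M$-invariant (so that the monodromy descends to $\PGL(3)$); the resulting polygon then lies in $\psi^{-1}(\boxx,\boy)$. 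Conversely, given $V\in\psi^{-1}(\boxx,\boy)$ with canonical lift $\widetilde V_i\in\C^3$, the assignment $\widetilde V^{\mathrm{corr}}_i\mapsto \widetilde V_i$ extends (since the $\widetilde V^{\mathrm{corr}}_i$ span $\C^k$) to a unique linear $\pi$ as above, and distinct kernels produce projectively inequivalent $\CP^2$-polygons.

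For generic $(\boxx,\boy)$ the matrix $\widetilde M$ is diagonalizable with $k$ distinct eigenvalues, so its $(k-3)$-dimensional invariant subspaces are exactly the $\binom{k}{k-3}=\binom{k}{3}$ direct sums of $k-3$ eigenlines; this establishes~(i) on a Zariski-open subset.

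For part~(ii), let $\widetilde V_i\in\C^3$ be the canonical lift of $V$ with $\psi(V)=(\boxx,\boy)$. The $i$-th vertex $V'_i$ of $\bar F_k(V)$ is the intersection of the lines $(V_i,V_{i+k-1})$ and $(V_{i+1},V_{i+k})$, and~\eqref{recurr} exhibits a common vector of the two corresponding $2$-planes of $\C^3$,
\[
\widetilde V'_i \;=\; \widetilde V_{i+k-1}+y_{i-1}\widetilde V_i \;=\; \widetilde V_{i+k}-x_i\widetilde V_{i+1},
\]
giving a natural lift of $V'_i$. What remains is the purely algebraic calculation from the proof of Theorem~\ref{corrug}(i), which only manipulates~\eqref{recurr} by taking linear combinations and never uses the ambient dimension: substituting $\widetilde V'_{i+k}=a\widetilde V'_{i+k-1}+b\widetilde V'_{i+1}+c\widetilde V'_i$ and invoking linear independence of $\widetilde V_{i+1},\widetilde V_{i+k},\widetilde V_{i+2k-1}$ in $\C^3$ yields $a=1$ along with the same $(b,c)$ that govern the transformation rule of $T_k$. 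Combined with the identification $F_k=T_k\circ S_{r'+1}$ from Theorem~\ref{corrug}(i), this gives $\psi\circ\bar F_k=F_k\circ\psi$.

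The main obstacle is making the genericity step in~(i) precise: one must check that the characteristic polynomial of $\widetilde M$, which is polynomial in $(\boxx,\boy)$, has a discriminant that does not vanish identically, so that distinct eigenvalues and the full count of $\binom{k}{3}$ invariant planes hold on a Zariski-open locus, and that on this locus the resulting $\CP^2$-polygons fall inside the open region where $\psi$ is defined. Both points should follow by evaluating at a convenient sample point, but carving out the precise open subset on which $\psi$ has degree $\binom{k}{3}$ requires care.
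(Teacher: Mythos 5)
Your proof is correct, and for part (i) it takes a genuinely different route from the paper's. The paper works bottom-up in $\C^3$: it fixes $\widetilde V_1,\widetilde V_2,\widetilde V_3$ as a basis, treats $\widetilde V_4,\dots,\widetilde V_k$ as $3(k-3)$ unknowns, writes the twist condition as a system of quadratic equations, and then shows this system is equivalent to the invariance of a $(k-3)$-plane $\mathcal V\subset\C^k$ under an explicit $k\times k$ matrix $B$ assembled from the transfer coefficients of the recurrence; generic simple spectrum of $B$ then gives the count $\binom{k}{3}$. You work top-down: every fiber element is realized as the image of the universal corrugated polygon in $\C^k$ under a surjection $\pi\colon\C^k\to\C^3$, and twistedness of the projected polygon is equivalent to $\widetilde M$-invariance of $\ker\pi$, reducing at once to counting invariant $(k-3)$-planes of the generic monodromy. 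Both arguments bottom out in the same count of invariant subspaces of a generic $k\times k$ matrix (and one expects $B$ to be essentially the monodromy in disguise), but your version identifies the relevant operator conceptually, avoids writing out the quadratic system, and makes the link to the corrugated polygon upstairs transparent; the paper's version is more explicit and self-contained. Your treatment also shares the paper's genericity gaps and is more candid about them: neither argument verifies that the spectrum is generically simple, and the paper additionally needs (and does not state) that each invariant plane is transverse to $\operatorname{span}(\widetilde V_1,\widetilde V_2,\widetilde V_3)$ so that it can be written as $\operatorname{span}(\xi_4,\dots,\xi_k)$ --- in your language, that $\pi$ restricted to that span is onto. For part (ii) the paper simply declares the conjugation obvious, and your observation that the computation of Theorem~\ref{corrug}(i) uses only the recurrence and the linear independence of $\widetilde V_{i+1},\widetilde V_{i+k},\widetilde V_{i+2k-1}$ (which holds generically in $\C^3$) is exactly the right way to make that explicit.
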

\begin{proof}
Given periodic sequences $x_i$, $y_i$, we wish to reconstruct a twisted $n$-gon in $\C^3$, up to a linear equivalence. To this end, let the first three vectors $\widetilde V_1, \widetilde V_2, \widetilde V_3$ form a basis, and choose $\widetilde V_4,\dots,\widetilde V_k$ arbitrarily, so far. Then $\widetilde V_{k+1}$, and all the next vectors, are determined by the recurrence~\eqref{recurr}. The monodromy $\widetilde M$ is determined by the condition that 
\begin{equation} \label{mono}
\widetilde M(\widetilde V_1)=\widetilde V_{n+1},\quad \widetilde M(\widetilde V_2)=\widetilde V_{n+2},\quad 
\widetilde M(\widetilde V_3)=\widetilde V_{n+3}.
\end{equation}
The twist condition is that 
\begin{equation} \label{twist}
\widetilde M(\widetilde V_j)=\widetilde V_{n+j},\quad j=4,\dots,k.
\end{equation}
If this holds, then $\widetilde M(\widetilde V_i)=\widetilde V_{n+i}$ for all $i$. Note that~\eqref{twist} gives $3(k-3)$ equations on that many variables (the unknown vectors being $\widetilde V_4,\dots,\widetilde V_k$). We shall see that these are quadratic equations and proceed to solving them.

The recurrence~\eqref{recurr} implies that $\widetilde V_q=\sum_{i=1}^k F_q^i \widetilde V_i$ where $F_q^i$ is a function of ${\bf x}, {\bf y}$. One has: $\widetilde V_j=v_j^1 \widetilde V_1 + v_j^2 \widetilde V_2 + v_j^3 \widetilde V_3$ where $v_j^1, v_j^2, v_j^3$ are the components of the vector $\widetilde V_j$ in the basis $\widetilde V_1, \widetilde V_2, \widetilde V_3$. 
Rewrite~\eqref{twist}, using~\eqref{mono}, as
$$
v_j^1 \left(\sum_{i=1}^k F_{n+1}^i \widetilde V_i\right) + v_j^2 \left(\sum_{i=1}^k F_{n+2}^i \widetilde V_i\right) + v_j^3 \left(\sum_{i=1}^k F_{n+3}^i \widetilde V_i\right) =
\sum_{i=1}^k F_{n+j}^i \widetilde V_i,
$$
or 
$$
A(\widetilde V_j)+\sum_{i=4}^k (F^i\cdot \widetilde V_j) \widetilde V_i = C_j +\sum_{i=4}^k g_j^i \widetilde V_i,\quad j=4,\ldots,k,
$$
where $A=(F^{\beta}_{n+\alpha})$, $\alpha,\beta=1,2,3$, is a $3\times3$ matrix, $F^i=(F_{n+1}^i, F_{n+2}^i, F_{n+3}^i)$, 
$C_j=(F_{n+j}^1,F_{n+j}^2,F_{n+j}^3)$, and $g_j^i=F_{n+j}^i$. Rewrite, once again, as 
\begin{equation} \label{eq}
A(\widetilde V_j)-C_j=\sum_{i=4}^k [g_j^i-(F^i\cdot \widetilde V_j)]\ \widetilde V_i.
\end{equation}
Let $B$ be the $k \times k$ matrix that has $A$ in the upper left corner, the vectors $-C_4, -C_5,\dots,-C_k$ to the right of $A$, the vectors $-(F_4)^t,\dots,-(F_k)^t$ below $A$, and the $(k-3)\times (k-3)$ matrix $G=g_j^i$ in the bottom right corner. The entries of $B$ are functions of ${\bf x}, {\bf y}$. Let $\xi_j,\ j=4,\dots,k$, be the $k$-dimensional vector $(V_j^1,V_j^2,V_j^3,0,\dots,0,1,0,\dots,0)$ with 1 at $j$-th position. Let $\mathcal V$ be the span of $\xi_j,\ j=4,\dots,k$.
\medskip

{\it Claim}: the system~\eqref{eq} is equivalent to the condition that $\mathcal V$ is a $B$-invariant subspace, that is, a fixed point of the action of $B$ on the Grassmannian $\operatorname{Gr} (k-3,k)$.

Indeed, one has: 
\begin{equation} \label{B}
B(\xi_j)=(A(\widetilde V_j)-C_j,g_j^4-(F^4\cdot \widetilde V_j),\dots,g_j^k-(F^k\cdot \widetilde V_j)).
\end{equation}
If (\ref{eq}) holds then 
\begin{equation*}
\begin{aligned}
B(\xi_j)&=\left(\sum_{i=4}^k [g_j^i-(F^i\cdot \widetilde V_j)]\ \widetilde V_i,g_j^4-(F^4\cdot \widetilde V_j),\dots,g_j^k-(F^k\cdot \widetilde V_j)\right)\\
&=\sum_{i=4}^k [g_j^i-(F^i\cdot \widetilde V_j)]\ \xi_i,
\end{aligned}
\end{equation*}
so $\mathcal V$ is $B$-invariant. 

Conversely, if $\mathcal V$ is $B$-invariant then $B(\xi_j)=\sum_{i=4}^k \alpha_i \xi_i$ with some coefficients $\alpha_i$. It follows from~\eqref{B} that $\alpha_i=g_j^i-(F^i\cdot \widetilde V_j)$, and that 
$$
A(\widetilde V_j)-C_j=\sum_{i=4}^k [g_j^i-(F^i\cdot \widetilde V_j)] \widetilde V_i,
$$ which is equation~\eqref{eq}. This proves the claim.

A generic linear transformation $B$ has a simple spectrum and $k$ one-dimensional eigenspaces. One has $k\choose 3$ invariant $(k-3)$-dimensional subspaces that can be parameterized as $\operatorname{span}(\xi_4,\dots,\xi_k)$. Thus one has $k\choose 3$ choices of vectors $\widetilde V_4,\dots,\widetilde V_k$ for given coordinates $x_i,y_i$. In other words, the mapping $\psi$ from the moduli space of twisted $n$-gons ${\mathcal P}_n$ to the ${\bf x}, {\bf y}$-space is $k\choose 3$-to-one. 
That this map conjugates the skip $(k-2)$-diagonal map $\bar F_k$ with the map $F_k$ is obvious, and we are done.
\end{proof}

It follows that if $I$ is an integral of the map $F_k$ then $I\circ \psi$ is an integral of the map $\bar F_k$. Thus the integrals (\ref{integrals}) provide integrals of the higher pentagram map.

\subsection{The case $k=2$: leapfrog map and circle pattern}

\subsubsection{Space of pairs of twisted $n$-gons in $\RP^1$}
Let ${\mathcal S}_n$ be the space whose points are pairs of twisted $n$-gons $(S^{-},S)$ in $\RP^1$ with the same monodromy. Here $S$ is a sequence of points $S_i \in \RP^1$, and likewise for $S^{-}$. One has: dim ${\mathcal S}_n=2n+3$. The group $PGL(2,\R)$ acts on ${\mathcal S}_n$.  Let $\varphi$ be the map from  ${\mathcal S}_n$ to the $({\bf x,y})$-space given by the formulas:
\begin{equation} \label{xyST}
x_i=\frac{(S_{i+1}-S^{-}_{i+2})(S^{-}_i-S^{-}_{i+1})}{(S^{-}_i-S_{i+1})(S^{-}_{i+1}-S^{-}_{i+2})}, \quad 
y_i=\frac{(S^{-}_{i+1}-S_{i+1})(S^{-}_{i+2}-S_{i+2})(S^{-}_i-S^{-}_{i+1})}{(S^{-}_{i+1}-S_{i+2})(S^{-}_i-S_{i+1})(S^{-}_{i+1}-S^{-}_{i+2})}.
\end{equation}

Recall that we use the cross-ratio defined by formula~\eqref{crs}.

\begin{proposition} \label{phi}
{\rm (i)} The composition of $\varphi$ with the projection $\pi$ is given by the formulas
$$
p_i=[S^{-}_{i+1},S_{i+1},S^{-}_{i+2},S_{i+2}],\quad  q_{i}=\frac{[S^{-}_i,S_{i+1},S_{i+2},S^{-}_{i+3}] [S^{-}_{i+1},S^{-}_{i+2},S_{i+2},S^{-}_{i+3}]}{[S^{-}_i,S^{-}_{i+1},S^{-}_{i+2},S^{-}_{i+3}][S^{-}_{i+1},S_{i+1},S_{i+2},S^{-}_{i+3}]}.
$$

 {\rm (ii)} The image of the map $\pi\circ\varphi$ belongs to the hypersurface $\prod_{i=1}^n p_iq_i=1$.

{\rm (iii)} The fibers of this maps are the $PGL(2,\R)$-orbits, and hence the $({\bf x,y})$-space is identified with the moduli space ${\mathcal S}_n/PGL(2,\R)$.
\end{proposition}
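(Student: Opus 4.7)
The plan is to handle (i) and (ii) by direct algebra from Proposition~\ref{weights}(ii), and to establish (iii) by combining a $PGL(2,\R)$-invariance check with an explicit inductive reconstruction of a pair $(S^-,S)$ from its image $\varphi(S^-,S)=(\boxx,\boy)$.

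For (i), I would specialize Proposition~\ref{weights}(ii) to $k=2$, where $r=r'=0$, to obtain $p_i=y_i/x_i$ and $q_i=x_{i+1}/y_i$. Substituting~\eqref{xyST} into $p_i=y_i/x_i$, three factors cancel immediately and the result is visibly $[S^-_{i+1},S_{i+1},S^-_{i+2},S_{i+2}]$. For $q_i=x_{i+1}/y_i$, the same substitution yields a ratio of products of six differences; I would verify that it equals the stated product of four cross-ratios by expanding each cross-ratio on the right-hand side via~\eqref{crs} and matching factors. Part (ii) is then immediate: $p_iq_i=x_{i+1}/x_i$, so $\prod_{i=1}^n p_iq_i$ telescopes to $1$ by $n$-periodicity of $\boxx$.

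For (iii), I first check that $\varphi$ factors through ${\mathcal S}_n/PGL(2,\R)$. Under a Möbius transformation $\sigma(a)=(\alpha a+\beta)/(\gamma a+\delta)$, a difference transforms as $\sigma(a)-\sigma(b)=(\det\sigma)(a-b)/[(\gamma a+\delta)(\gamma b+\delta)]$. In each of $x_i$ and $y_i$ the numerator and denominator contain the same number of difference factors, so the $\det\sigma$ factors cancel; moreover, each point $S^-_j$ or $S_j$ that appears in the formula appears as often in the numerator as in the denominator, so the linear factors $(\gamma\cdot+\delta)$ cancel as well. For injectivity of the induced map, I would use $PGL(2,\R)$ to normalize $(S^-_1,S^-_2,S^-_3)$ to any chosen triple of distinct points. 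Now the formula for $x_i$ involves only $S^-_i,S^-_{i+1},S^-_{i+2},S_{i+1}$ and is a fractional-linear function of $S_{i+1}$ with everything else fixed; the formula for $y_i$ additionally involves $S_{i+2}$ and is a fractional-linear function of $S_{i+2}$; and $x_{i+1}$ is fractional-linear in $S^-_{i+3}$. Thus, starting from the normalized triple, I can solve $x_1$ for $S_2$, then $y_1$ for $S_3$, then $x_2$ for $S^-_4$, then $y_2$ for $S_4$, and continue, recovering the entire pair $(S^-,S)$ uniquely from $(\boxx,\boy)$. The $n$-periodicity of $\boxx,\boy$ forces the recovered sequences to satisfy $S^-_{n+i}=M(S^-_i)$ and $S_{n+i}=M(S_i)$ for a common $M\in PGL(2,\R)$, determined by its values on the three normalized points, so the reconstruction genuinely lands in ${\mathcal S}_n$. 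Invariance plus this unique reconstruction identify the fibers of $\varphi$ with $PGL(2,\R)$-orbits.

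The main obstacle I anticipate is bookkeeping: the cross-ratio identity for $q_i$ in (i) requires careful tracking of six-factor products, and in (iii) one must confirm that each Möbius inversion step is non-degenerate on a suitable open dense subset, so that the reconstruction is well-defined generically. The consistency between the inductive reconstruction and the monodromy/periodicity condition should follow automatically from uniqueness, and the genericity assumptions implicit in~\eqref{xyST} (non-coincidence of consecutive $S^-$ and $S$ points) ensure that the Möbius solutions are indeed invertible.
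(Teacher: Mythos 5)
Your parts (i) and (ii) are exactly the paper's argument: specialize Proposition~\ref{weights}(ii) to $k=2$ to get $p_i=y_i/x_i$, $q_i=x_{i+1}/y_i$, substitute~\eqref{xyST}, and telescope $p_iq_i=x_{i+1}/x_i$ using $n$-periodicity. For part (iii) you take a genuinely different route. The paper introduces the interlaced sequence $\ldots,S_i^-,S_i,S_{i+1}^-,S_{i+1},\ldots$ and its consecutive cross-ratios $p_{i-1}=[S_i^-,S_i,S_{i+1}^-,S_{i+1}]$, $r_i=[S_i,S_{i+1}^-,S_{i+1},S_{i+2}^-]$, takes for granted that $({\bf p},{\bf r})$ is a coordinate system on ${\mathcal S}_n/PGL(2,\R)$, and then exhibits the explicit birational change of variables between $({\bf x},{\bf y})$ and $({\bf p},{\bf r})$, namely $x_i=r_i(1+p_{i-1})/(1+r_i)$, $y_i=r_ip_i(1+p_{i-1})/(1+r_i)$ and its inverse. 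You instead prove the fiber statement from scratch: $PGL(2,\R)$-invariance by factor counting (each point and each $\det$ appears equally often in numerator and denominator of~\eqref{xyST}), then normalization of $(S_1^-,S_2^-,S_3^-)$ followed by the inductive Moebius-inversion chain $x_1\to S_2$, $y_1\to S_3$, $x_2\to S_4^-$, $y_2\to S_4,\ldots$, with the monodromy recovered from $(S_{n+1}^-,S_{n+2}^-,S_{n+3}^-)$ and equivariance plus $n$-periodicity guaranteeing a single $M$. Your dependence analysis is right (each formula is fractional-linear in exactly one new unknown, nondegenerate on a dense open set), so the reconstruction is valid and also yields generic surjectivity. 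What each approach buys: the paper's is shorter and produces the useful auxiliary coordinates $({\bf p},{\bf r})$, but it leans on an unproved (if standard) fact about interlaced cross-ratio coordinates; yours is self-contained and makes the orbit identification completely explicit, at the cost of more bookkeeping about genericity and the monodromy consistency, which you correctly flag.
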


\begin{proof}
{\rm (i)} From Proposition~\ref{weights}, we have:
\begin{equation} \label{pqxy}
p_i=\frac{y_i}{x_i},\qquad  q_{i}=\frac{x_{i+1}}{y_{i}}.
\end{equation}
Then a direct computation using formulas~\eqref{xyST} for $x$ and $y$  yields the result.

{\rm (ii)} The sequences $x_i$ and $y_i$ are $n$-periodic. Multiplying  $p_i$ and $q_i$ from~\eqref{pqxy}, $i=1,\ldots, n$, the numerators and denominators cancel out, and the result follows.

{\rm (iii)} Put the sequences of points $S^-, S$ in the interlacing order:
$$
\ldots, S_i^-, S_i, S_{i+1}^-, S_{i+1}, S_{i+2}^-,\ldots
$$   
and consider the cross-ratios of the consecutive quadruples:
$$
p_{i-1}=[S_i^-,S_i,S_{i+1}^-,S_{i+1}],\qquad r_i=[S_i,S_{i+1}^-,S_{i+1}, S_{i+2}^-];
$$
the first equality was proved in (i), and the second is the definition of $r$.
The sequences $p_i$ and $r_i$ are $n$-periodic and they determine the projective equivalence class of the pair $(S^-,S)$. Thus we have a coordinate system $({\bf p,r})$ on  ${\mathcal S}_n/PGL(2,\R)$.

We wish to show that  $({\bf x,y})$ is another coordinate system. Indeed, one can express  $({\bf x,y})$ in terms of $({\bf p,r})$ and vice versa:
$$
x_i=\frac{r_i(1+p_{i-1})}{1+r_i},\quad y_i=\frac{r_i p_i (1+p_{i-1})}{1+r_i};\qquad p_i=\frac{y_i}{x_i},\quad r_i=\frac{x_{i-1}x_i}{x_{i-1}(1-x_i)+y_{i-1}}
$$
(we omit this straightforward computation).
\end{proof}

\subsubsection{Leapfrog transformation} 
Define  a transformation $\Phi$ of the space ${\mathcal S}_n$, acting as $\Phi(S^{-},S)=(S,S^{+})$, where $S^{+}$ is given by the following local ``leapfrog" rule: given a quadruple of points $S_{i-1}, S^{-}_i, S_i, S_{i+1}$, the point $S_i^{+}$ is the result of applying to $S^{-}_i$ the unique projective involution that fixes $S_i$ and interchanges $S_{i-1}$ and $S_{i+1}$. Clearly, $\Phi$ commutes with projective transformations.

The transformation $\Phi$ can be defined this way over any ground field, however in $\RP^1$ we can
interpret  the point $S_i^{+}$ as  the reflection of $S^{-}_i$ in $S_i$ in the projective metric on the segment $[S_{i-1},S_{i+1}]$, whence the name; see Fig.~\ref{leap}. Recall that the projective metric on a segment is the Riemannian metric whose isometries are the projective transformations preserving the segment, see~\cite[Chap.~4]{Bu}. That is, the projective distance between points $P$ and $Q$ on a segment $AB$ is as given by the formula
$$
d(P,Q)=\frac{1}{2}\ln \frac{(Q-A)(B-P)}{(P-A)(B-Q)}
$$
(this formula defines distance in the Cayley-Klein, or projective, model of hyperbolic geometry; the factor $1/2$ is needed for the curvature to be $-1$).

\begin{figure}[hbtp]
\centering
\includegraphics[height=.6in]{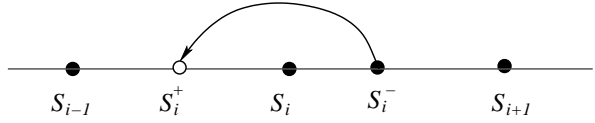}
\caption{Evolution of points in the projective line}
\label{leap}
\end{figure}

\begin{theorem} \label{k2}
 {\rm (i)} The map $\Phi$ is given by the following equivalent equations:
\begin{equation} \label{Men1}
\frac{1}{S^{+}_i-S_i}+\frac{1}{S^{-}_i-S_i}=\frac{1}{S_{i+1}-S_i}+\frac{1}{S_{i-1}-S_i},
\end{equation}
\begin{equation} \label{Men2}
\frac{(S^{+}_i-S_{i+1})(S_i-S^{-}_i)(S_i-S_{i-1})}{(S^{+}_i-S_i)(S_{i+1}-S_i)(S^{-}_i-S_{i-1})}=-1,
\end{equation}
\begin{equation} \label{Men3}
\frac{(S^{+}_i-S_{i-1})(S_i-S^{-}_i)(S_{i+1}-S_{i})}{(S^{+}_i-S_i)(S_{i}-S_{i-1})(S^{-}_i-S_{i+1})}=-1.
\end{equation}
{\rm (ii)} The map induced by $\Phi$ on the moduli space ${\mathcal S}_n/PGL(2,\R)$ is the map $T_2$  given in \eqref{mapxy}.
\end{theorem}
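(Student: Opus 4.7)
My plan is to prove parts (i) and (ii) separately: (i) follows from a single application of cross-ratio invariance under the defining involution, while (ii) is a coordinate calculation that is best organized around one neat factorization.

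For (i), let $\sigma$ be the unique projective involution that fixes $S_i$ and exchanges $S_{i-1}\leftrightarrow S_{i+1}$ (it also automatically exchanges $S^-_i\leftrightarrow S^+_i$ by the definition of the leapfrog map). Since $\sigma$ preserves all cross-ratios,
$$[S^-_i,S_i,S_{i+1},S_{i-1}]=[S^+_i,S_i,S_{i-1},S_{i+1}],$$
and unpacking this identity via~\eqref{crs} and clearing denominators produces~\eqref{Men2} directly. Equivalence of the three forms is then a short algebraic check: combining $\tfrac{1}{v-b}-\tfrac{1}{c-b}$ with $\tfrac{1}{a-b}-\tfrac{1}{u-b}$ in~\eqref{Men1} (with $(a,b,c,u,v)=(S_{i-1},S_i,S_{i+1},S^-_i,S^+_i)$) rearranges to~\eqref{Men2}, and~\eqref{Men3} is obtained from~\eqref{Men2} by the $a\leftrightarrow c$ swap, which is a symmetry of~\eqref{Men1} and hence of the whole relation.

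For (ii), the composition $\varphi\circ\Phi$ is computed by substituting $S^-\mapsto S$ and $S\mapsto S^+$ in~\eqref{xyST}. The key preparatory identity is
$$x_i+y_i=\frac{(S^-_i-S^-_{i+1})(S_{i+1}-S_{i+2})}{(S^-_i-S_{i+1})(S^-_{i+1}-S_{i+2})},$$
which one obtains by putting the two summands in~\eqref{xyST} over a common denominator and observing the unexpected cancellation in the numerator. With this in hand, I would apply~\eqref{Men2} to every $S^+_j$ appearing in $x^*_i$ and $y^*_i$, reducing each expression to a product of cross-ratios in the interlaced sequence $\dots,S^-_j,S_j,S^-_{j+1},\dots$. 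These products match the $k=2$, $r=r'=0$ specialization of~\eqref{mapxy},
$$x^*_i=x_{i-1}\,\frac{x_i+y_i}{x_{i-1}+y_{i-1}},\qquad y^*_i=y_i\,\frac{x_{i+1}+y_{i+1}}{x_i+y_i},$$
after a transparent telescoping.

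The main obstacle is keeping the bookkeeping in (ii) under control; the factorization of $x_i+y_i$ into a single cross-ratio is essential, since without it the number of terms becomes unmanageable. As a sanity check I would also redo the computation in the $(\bop,\boq)$-coordinates of Proposition~\ref{phi}(i), where $p_i$ and $q_i$ are themselves explicit cross-ratios of four consecutive interlaced points, and verify that $\Phi$ acts as the cluster $\tau$-transformation at all $p$-vertices of $\caq_{2,n}$; combined with Proposition~\ref{Tkdef}(ii), this would give an independent confirmation of the $T_2$ formula.
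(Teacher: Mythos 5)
Your proposal is correct and follows essentially the same route as the paper's proof: the paper obtains \eqref{Men1} first from the normal form $\frac{1}{x-a}+\frac{1}{y-a}=\mathrm{const}$ of a fractional-linear involution fixing $a$, whereas you obtain \eqref{Men2} first from cross-ratio invariance under that same involution --- a cosmetic difference, since both then check the equivalence of the three equations by a short algebraic rearrangement. For part (ii) you rely on exactly the paper's key simplification, namely the factorization of $x_i+y_i$ into a single cross-ratio, followed by the same direct verification against \eqref{mapxy} with $k=2$ (the paper records the outcome as: the $x$-equation is the quotient of \eqref{Men2} and \eqref{Men3}, and the $y$-equation is their product).
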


\begin{proof} 
 {\rm (i)} A fractional-linear involution $x\mapsto y$ with a fixed point $a$ is given by the formula
 $$
 \frac{1}{x-a}+\frac{1}{y-a}=b
 $$
 where $b$ is some constant.
 Since the leapfrog involution has $S_i$ as a fixed point and swaps $S_i^-$ with $S_i^+$ and $S_{i-1}$ with $S_{i+1}$, one has
 $$
 \frac{1}{S_i^--S_i}+\frac{1}{S_i^+-S_i}=b= \frac{1}{S_{i-1}-S_i}+\frac{1}{S_{i+1}-S_i},
 $$
 which implies~\eqref{Men1}. That equalities~\eqref{Men2} and~\eqref{Men3} are equivalent to~\eqref{Men1} is verified by a straightforward computation. 
 
{\rm (ii)} We need to check that 
\begin{equation} \label{newold}
x_i^*=x_{i-1} \frac{x_i+y_i}{x_{i-1}+y_{i-1}}, \qquad y_i^*=y_i \frac{x_{i+1}+y_{i+1}}{x_i+y_i},  
\end{equation}
where $(x_i^*,y_i^*)$ are given by formulas~\eqref{xyST} with $S^-$ replaced by $S$ and $S$ by $S^+$. 

The computation is simplified by the observation that
$$
x_i+y_i = \frac{(S_{i+1}^- - S_i^-) (S_{i+2}-S_{i+1})}{(S_{i+1}^- - S_{i+2}) (S_i^- - S_{i+1})}.
$$
After substituting to~\eqref{newold}, a direct computation reveals that the first of the equalities~\eqref{newold} is equivalent to the quotient of~\eqref{Men2} and~\eqref{Men3}, whereas the second is equivalent to their product.
\end{proof}

The leapfrog transformation can be interpreted in terms of hyperbolic geometry. Let us identify $\RP^1$ with the circle at infinity of the hyperbolic plane ${\mathbf H}^2$. Then the restrictions of  hyperbolic isometries on the circle at infinity are the projective transformations of $\RP^1$. Accordingly,  $S^-$ and $S$ are ideal polygons in ${\mathbf H}^2$.

The projective transformation that interchanges the vertices $S_{i-1}$ and $S_{i+1}$ and fixes $S_i$ is the reflection of the hyperbolic plane in the line $L_i$ through $S_i$, perpendicular to the line $S_{i-1}S_{i+1}$ (that is, the altitude of the ideal triangle $S_{i-1} S_i S_{i+1}$); see Fig.~\ref{reflection} where we use the projective (Cayley-Klein) model of the hyperbolic plane. 

\begin{figure}[hbtp]
\centering
\includegraphics[height=1.8in]{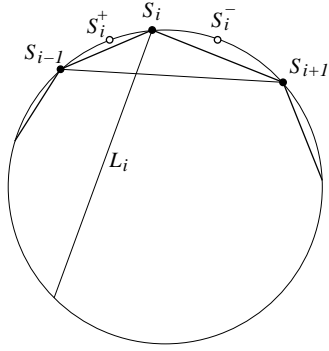}
\caption{Leapfrog transformation in the hyperbolic plane}
\label{reflection}
\end{figure}

The ideal polygon obtained by reflecting each vertex $S^-_i$ in the respective line $L_i$ is $S^+$. Thus the leapfrog transformation $\Phi$ is presented as the composition of two involutions:
$$
(S^-,S) \mapsto (S^+,S) \mapsto (S,S^+).
$$

We note a certain similarity of the map $\Phi$ with the polygon recutting studied by Adler \cite{Ad1,Ad2}, which is also a completely integrable transformation of polygons (in the Euclidean plane or, more generally, Euclidean space). In Adler's case, one reflects the  vertex $V_i$  in the  perpendicular bisector of the diagonal $V_{i-1} V_{i+1}$, after which one proceeds to the next vertex by increasing the index $i$ by 1.

\subsubsection{Lagrangian formulation of leapfrog transformation}
The map $\Phi$ can be described as a discrete Lagrangian system. 
Let us recall  relevant definitions, see, e.g., \cite{Ve}.

Given a manifold $M$, a Lagrangian system is a map $F:M\times M \to M\times M$  defined as follows:
$$
F(x,y)=(y,z)\quad  \text{if and only if} \quad \frac{\partial}{\partial y} \left(L(x,y)+L(y,z)\right) =0,
$$
where $L:M\times M \to \R$ is a function (called the Lagrangian). 

Many familiar discrete time dynamical systems can be described this way (for example, the billiard ball map, for which $L(x,y)=|x-y|$ where $x$ and $y$ are points on the boundary of the billiard table).

Note that the map $F$ does not change if the Lagrangian is changed as follows: 
\begin{equation} \label{change}
L(x,y) \mapsto L(x,y) + g(x)-g(y)
\end{equation}
 where $g$ is an arbitrary function.

A Lagrangian system has an invariant pre-symplectic (that is, closed) differential 2-form
$$
\omega = \sum_{i,j} \frac{\partial^2 L(x,y)}{\partial x_i \partial y_j} dx_i \wedge dy_j.
$$
The form $\omega$ does not change under the transformation~\eqref{change}.

In the next proposition, assume that the $n$-gons in $\RP^1$ under consideration are closed (that is, the monodromy is the identity). As before, we choose an affine coordinate on the projective line and treat the vertices $S_i$ and $S^-_i$ as numbers. The index $i$ is understood cyclically, so that $i=n+1$ is the same as $i=1$.

\begin{proposition} \label{genfunct}
{\rm (i)} The leapfrog map $\Phi$ is a discrete Lagrangian system with the Lagrangian
\begin{equation} \label{Lagr}
L(S^-,S) = \sum_i \ln|S_i - S_{i+1}| - \sum_i \ln|S_i - S^-_i|.
\end{equation}

{\rm (ii)} The Lagrangian $L$ changes under fractional-linear transformation
$$
x \mapsto \frac{ax+b}{cx+d}
$$
as follows:
$L(S^-,S) \mapsto L(S^-,S) + g(S^-) - g(S)$,
where $g(S)=\sum_i \ln |cS_i+d|$.
\end{proposition}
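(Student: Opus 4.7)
The plan is to handle the two parts by direct (but careful) computations, using only the standard formulas for derivatives of $\ln$ and the cross-difference identity for M\"obius maps.

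For (i), I would write out $L(S^-,S)+L(S,S^+)$ explicitly and differentiate with respect to $S_i$. Only four terms in the combined Lagrangian involve $S_i$: the two terms $\ln|S_{i-1}-S_i|$ and $\ln|S_i-S_{i+1}|$ from the first sum of $L(S^-,S)$, the term $-\ln|S_i-S_i^-|$ from the second sum of $L(S^-,S)$, and the term $-\ln|S_i^+-S_i|$ from the second sum of $L(S,S^+)$. Computing each partial derivative (tracking the signs carefully, since $\partial_{S_i}\ln|S_{i-1}-S_i|=1/(S_i-S_{i-1})$ while $\partial_{S_i}\ln|S_i-S_{i+1}|=1/(S_i-S_{i+1})$), the critical-point equation becomes
\begin{equation*}
\frac{1}{S_i-S_{i-1}}+\frac{1}{S_i-S_{i+1}}=\frac{1}{S_i-S_i^-}+\frac{1}{S_i-S_i^+},
\end{equation*}
which is precisely equation~\eqref{Men1} after multiplying both sides by $-1$. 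Thus the Euler--Lagrange equations for $L$ coincide with the defining relation of $\Phi$, proving (i).

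For (ii), the key ingredient is the Moebius identity
\begin{equation*}
\frac{ax+b}{cx+d}-\frac{ay+b}{cy+d}=\frac{(ad-bc)(x-y)}{(cx+d)(cy+d)},
\end{equation*}
whose logarithm gives $\ln\bigl|\tfrac{ax+b}{cx+d}-\tfrac{ay+b}{cy+d}\bigr|=\ln|ad-bc|+\ln|x-y|-\ln|cx+d|-\ln|cy+d|$. I would apply this separately to each factor $|S_i-S_{i+1}|$ in the first sum and each factor $|S_i-S_i^-|$ in the second sum of $L(S^-,S)$. In the first sum I pick up $n$ copies of $\ln|ad-bc|$ and two copies of $-\ln|cS_i+d|$ for each $i$; in the second sum I pick up $-n$ copies of $\ln|ad-bc|$, together with one copy of $+\ln|cS_i+d|$ and one copy of $+\ln|cS_i^-+d|$ for each $i$. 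The $\ln|ad-bc|$ contributions cancel, and the $cS_i+d$ contributions collapse to $-\sum_i\ln|cS_i+d|=-g(S)$, while the $cS_i^-+d$ contributions assemble into $+g(S^-)$. This gives the stated transformation law.

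I do not expect any real obstacle: both statements reduce to bookkeeping, and neither uses the global structure of $\mathcal S_n$ (the closedness assumption is only needed so that the sums in~\eqref{Lagr} are finite and the index $i$ runs cyclically). The only small subtlety is the sign tracking in (i), where two of the four terms come with $+$ and two with $-$, and it is important to verify that the resulting equation matches~\eqref{Men1} rather than its negative.
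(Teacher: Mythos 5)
Your proposal is correct and follows essentially the same route as the paper: part (i) is exactly the differentiation of $L(S^-,S)+L(S,S^+)$ with respect to $S_i$ recovering~\eqref{Men1}, and part (ii) is the same M\"obius cross-difference identity applied termwise to the two sums, with the $\ln|ad-bc|$ contributions cancelling and the $\ln|cS_i+d|$ terms assembling into $g(S^-)-g(S)$. Your sign bookkeeping in (i) and the count of $\pm\ln|cS_i+d|$ factors in (ii) both check out.
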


\begin{proof}
(i) Differentiating $L(S^-,S)+L(S,S^+)$ with respect to $S_i$ yields~\eqref{Men1}.

(ii) If $\bar S=(aS+b)/(cS+d)$ then
$$
\bar S_i-\bar S_{i+1}=\frac{D (S_i-S_{i+1})}{(cS_i+d)(cS_{i+1}+d)},\qquad \bar S_i-\bar S^-_{i}=\frac{D (S_i-S^-_{i})}{(cS_i+d)(cS^-_{i}+d)}
$$
with $D=ad-bc$. It follows that 
\begin{multline*}
\sum_i (\ln|\bar S_i - \bar S_{i+1}| -  \ln|\bar S_i - \bar S^-_i|) = 
\sum_i (\ln|S_i - S_{i+1}| -  \ln|S_i - S^-_i)| +\\
\sum_i (\ln |cS^-_i+d| - \ln |cS_i+d)|,
\end{multline*}
as claimed.
\end{proof}

\begin{corollary}
The 2-form 
$$
\omega=\sum_{i=1}^n \frac{dS^{-}_i\wedge dS_i}{(S^{-}_i-S_i)^2}.
$$
is a closed $\PSL(2,\R)$-invariant differential form invariant under the map $\Phi$. 
\end{corollary}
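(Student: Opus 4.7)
The plan is to identify $\omega$, up to an overall sign, with the canonical pre-symplectic form attached to the Lagrangian $L$ from Proposition~\ref{genfunct}. By the general theory of discrete Lagrangian systems recalled immediately before that proposition, the 2-form
\begin{equation*}
\omega_L \;=\; \sum_{i,j=1}^n \frac{\partial^2 L(S^-,S)}{\partial S^-_i\,\partial S_j}\, dS^-_i\wedge dS_j
\end{equation*}
is automatically closed and $\Phi$-invariant, so the main task is to compute it.

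I would then substitute $L$ from~\eqref{Lagr}. The summand $\sum_k \ln|S_k-S_{k+1}|$ does not involve $S^-$, hence contributes nothing to any mixed partial $\partial^2/\partial S^-_i\partial S_j$. The remaining piece $-\sum_k \ln|S_k-S^-_k|$ has only diagonal mixed derivatives, and a direct computation gives $\partial^2/\partial S^-_i\partial S_j=-\delta_{ij}/(S_i-S^-_i)^2$. Therefore
\begin{equation*}
\omega_L \;=\; -\sum_{i=1}^n \frac{dS^-_i\wedge dS_i}{(S^-_i-S_i)^2} \;=\; -\omega,
\end{equation*}
so $\omega$ is closed and $\Phi$-invariant. (As a cross-check of closedness, one may directly write $\omega=d\alpha$ with $\alpha=\sum_i dS_i/(S_i-S^-_i)$.)

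For $\PSL(2,\R)$-invariance, I would appeal to Proposition~\ref{genfunct}(ii): under a Möbius transformation the Lagrangian is altered by $g(S^-)-g(S)$, a ``boundary'' term whose mixed partials $\partial^2/\partial S^-_i\partial S_j$ vanish identically, since $g(S^-)$ depends only on $S^-$ and $g(S)$ only on $S$. Hence $\omega_L$, and therefore $\omega$, is unchanged by $\PSL(2,\R)$. As an alternative sanity check, the identities $\bar S-\bar S^-=D(S-S^-)/\big((cS+d)(cS^-+d)\big)$ and $d\bar S=D\,dS/(cS+d)^2$ show directly that each summand $(dS^-_i\wedge dS_i)/(S^-_i-S_i)^2$ is invariant under the diagonal Möbius action. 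No step here presents a real obstacle; the corollary is essentially a direct packaging of Proposition~\ref{genfunct} with the standard symplectic formalism of discrete Lagrangian mechanics.
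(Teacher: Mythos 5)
Your proposal is correct and is exactly the argument the paper intends: the corollary is stated without a separate proof precisely because it follows by computing the mixed partials of the Lagrangian~\eqref{Lagr} (only the diagonal term $-\ln|S_i-S^-_i|$ contributes, giving $\omega$ up to sign) and then invoking the general facts, recalled just before Proposition~\ref{genfunct}, that the pre-symplectic form of a discrete Lagrangian system is closed and invariant under the map and is unchanged by the modification~\eqref{change}, which together with Proposition~\ref{genfunct}(ii) gives the $\PSL(2,\R)$-invariance. Your computations (including the primitive $\alpha$ and the direct M\"obius check) are all correct.
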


We note that the form $\omega$ is not basic, that is, it does not descend on the quotient space ${\mathcal S}_n/PGL(2,\R)$.

\begin{remark} 
 Numerical simulations show that $\Phi$ does not have integrable behavior.
Figure~\ref{fig:nonintegrableK2} shows chaotic behavior of two
quantities: the horizontal axis is $S_2-S_1$, the vertical axis is
$S_3-S_1$.

\begin{figure}[hbtp]
\centering
\includegraphics[height=2.5in]{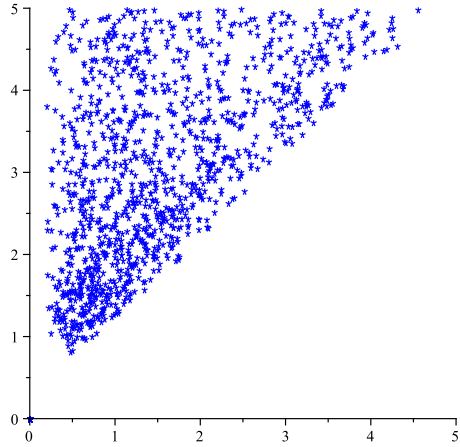}
\caption{Non-integrable behavior of the map $\Phi$ for $n=6$}
\label{fig:nonintegrableK2}
\end{figure}

\end{remark}

\subsubsection{Circle pattern} If the ground field is $\C$ then the mapping $\Phi$ can be interpreted as a certain circle pattern. 

\begin{figure}[hbtp]
\centering
\includegraphics[height=2.5in]{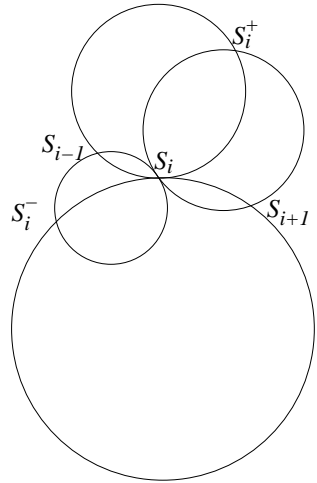}
\caption{Pairwise tangent circles}
\label{circles}
\end{figure}

Consider Fig.~\ref{circles}. This figure depicts a local rule of constructing point $S_i^+$ from points $S_{i-1}, S_i, S_{i+1}$ and $S^{-}_i$. Namely, draw the circle through  points $S_{i-1},S^{-}_i,S_{i}$, and then draw the circle through points $S_i, S_{i+1}$, tangent to the previous circle. Now repeat the construction: draw the circle through points $S_{i+1},S^{-}_i,S_{i}$, and then draw the circle through points $S_i, S_{i-1}$, tangent to the previous circle. Finally, define $S^{+}_i$ to be the intersection point of the two ``new" circles.

\begin{proposition} \label{circint}
The tangency of two pairs of circles meeting at point $S_i$ in Fig.~\ref{circles} is equivalent to 
equations~\eqref{Men1}-\eqref{Men3}.
\end{proposition}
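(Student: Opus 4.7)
The plan is to exploit Möbius invariance to reduce to a tractable special position. First, I would observe that both sides of the claimed equivalence are invariant under fractional-linear transformations of $\C$. Tangency of two generalized circles at a common point is Möbius invariant because such transformations are conformal and send generalized circles to generalized circles. For the algebraic equations, one verifies directly that \eqref{Men2} and \eqref{Men3} are Möbius invariant: under $z\mapsto(az+b)/(cz+d)$ a difference $z_j-z_k$ acquires a factor $(ad-bc)/[(cz_j+d)(cz_k+d)]$, and in each of these two equations the three factors in the numerator pair with the three factors in the denominator so that all such scaling factors cancel (the $D^3$ also cancels). Since \eqref{Men1}--\eqref{Men3} are mutually equivalent, as noted in the proof of Theorem~\ref{k2}(i), the entire statement is Möbius invariant.

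Next, I would apply a Möbius map sending $S_i$ to $\infty$. Each of the four circles in Fig.~\ref{circles} passes through $S_i$ and therefore degenerates into a line, and tangency at $\infty$ becomes parallelism of the corresponding lines. The two tangency conditions thus read
\begin{equation*}
(S_{i-1}S^{-}_i)\parallel(S_{i+1}S^{+}_i)\qquad\text{and}\qquad(S_{i+1}S^{-}_i)\parallel(S_{i-1}S^{+}_i).
\end{equation*}
Reading this as a statement about the quadrilateral with vertices $S_{i-1},S^{-}_i,S_{i+1},S^{+}_i$ taken in cyclic order, both pairs of opposite sides are parallel, so it is a parallelogram. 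Equivalently, its diagonals bisect each other:
\begin{equation*}
S^{-}_i+S^{+}_i=S_{i-1}+S_{i+1}.
\end{equation*}

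Finally, I would check that \eqref{Men2} reduces to precisely the same identity in the limit $S_i\to\infty$. Of its six factors, four involve $S_i$, namely $S_i-S^{-}_i,\ S_i-S_{i-1},\ S^{+}_i-S_i,\ S_{i+1}-S_i$; each grows like $\pm S_i$, and together their contribution to the ratio tends to $1$. What remains is $(S^{+}_i-S_{i+1})/(S^{-}_i-S_{i-1})=-1$, i.e.\ the parallelogram identity. Hence in the reduced position the tangency conditions and \eqref{Men2} coincide, and Möbius invariance propagates the equivalence to the general position. The only delicate point is uniqueness of $S^{+}_i$ in the parallelism-to-equation direction: the two conditions determine $S^{+}_i$ as the intersection of two lines, which is a single point unless $S^{-}_i,\,S_{i-1},\,S_{i+1}$ are collinear; this non-generic case is handled by continuity.
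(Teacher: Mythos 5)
Your proof is correct and follows essentially the same route as the paper: send $S_i$ to infinity by a M\"obius transformation, turn tangency of circles through $S_i$ into parallelism of lines, read off the parallelogram condition $S^-_i+S^+_i=S_{i-1}+S_{i+1}$, and identify it with the degenerate form of~\eqref{Men2}. Your explicit verification of the M\"obius invariance of the equations and your remark on the uniqueness of $S^+_i$ are details the paper leaves implicit, but the argument is the same.
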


\begin{proof}
A M\"obius transformation sends a circle or a line to a circle or a line. Send point $S_i$ to infinity; then the circles through this points become straight lines. Two circles are tangent if they make zero angle. Since M\"obius transformations are conformal, the respective lines are parallel.

Thus the configuration of circles in Fig.~\ref{circles} becomes a parallelogram with vertices $S_{i-1}, S^-_i, S_{i+1}$ and $S_i^+$. The quadrilateral is a parallelogram if and only if
\begin{equation} \label{para}
S_{i-1}+S_{i+1}=S^-_i+S_i^+.
\end{equation}
On the other hand, if $S_i=\infty$ then equation~\eqref{Men2} becomes
$$
\frac{S^{+}_i-S_{i+1}}{S^{-}_i-S_{i-1}}=-1,
$$
which is equivalent to~\eqref{para}.
\end{proof}

This circle pattern generalizes the one studied by O.~Schramm in~\cite{Sc} in the framework of discretization of the theory of analytic functions (there the pairs of non-tangent neighboring circles were orthogonal). See also \cite{BH} concerning more general  circle patterns with constant intersection angles and their relation with discrete integrable systems of Toda type.

\bigskip

{\bf Acknowledgments}. It is a pleasure to thank the Hausdorff Research Institute for Mathematics whose hospitality the authors enjoyed in summer of 2011 where this research was initiated. We are grateful to A.~Bobenko, V.~Fock, S.~Fomin, M.~Glick, R.~Kedem,
R.~Kenyon, B.~Khesin, G.~Mari-Beffa, V.~Ovsienko, R.~Schwartz, F. Soloviev, Yu.~Suris
for stimulating discussions. M.~G. was partially supported by the
NSF grant DMS-1101462; M.~S. was partially supported by the NSF grants DMS-1101369 and DMS-1362352;
S.~T. was partially supported by the Simons Foundation grant No 209361 and by the NSF grant DMS-1105442;
A.~V. was partially supported by the ISF grant No~162/12.


\begin{thebibliography}{99}

\bibitem{AHH} M. Adams, J. Harnad, J. Hurtubise. 
{\it Dual moment maps into loop algebras}. Let. Math. Phys. {\bf 20} (1990), 299--308.

\bibitem{Ad1} V. Adler. 
{\it Cutting of polygons}. Funct. Anal. Appl. {\bf 27} (1993), 141--143.

\bibitem{Ad2} V. Adler. 
{\it Integrable deformations of a polygon}. Phys. D {\bf 87} (1995), 52--57.

\bibitem{BH} A. Bobenko, T. Hoffmann.
{\it Hexagonal circle patterns and integrable systems: patterns with constant angles}.
Duke Math. J.
{\bf 116} (2003),  525--566.

\bibitem{Bu} H. Busemann, P. Kelly.
{\it Projective geometry and projective metrics}.
Academic Press, New York, 1953.


\bibitem{DKN} B. Dubrovin, I. Krichever, S. Novikov.
{\it Integrable systems. I}, Dynamical systems. IV, 177�-332,
Encyclopaedia Math. Sci. {\bf 4}. Springer, Berlin, 2001.

\bibitem{FT} L. Faddeev, L. Takhtajan.
{\it Hamiltonian methods in the theory of solitons}.
Springer-Verlag, Berlin, 1987.

\bibitem{FM} V. Fock, A. Marshakov.
{\it Loop groups, clusters, dimers and integrable systems.}
Preprint  arXiv:1401.1606.

\bibitem{FZ4} S. Fomin, A. Zelevinsky,
{\it Cluster algebras. IV. Coefficients}.
Compos. Math.  {\bf 143}  (2007),   112--164.


\bibitem{GSV2} M. Gekhtman, M.  Shapiro, A. Vainshtein.
{\it Poisson geometry of directed networks in a disk}.
Selecta Math. {\bf 15} (2009),  61--103.

\bibitem{GSV3} M. Gekhtman, M.  Shapiro, A. Vainshtein.
{\it Cluster algebras and Poisson geometry}.
Amer. Math. Soc., Providence, RI, 2010.

\bibitem{GSV4} M. Gekhtman, M.  Shapiro, A. Vainshtein.
{\it Poisson geometry of directed networks in an annulus}.
J. Europ. Math. Soc. {\bf 14} (2012),   541--570.

\bibitem{GSV5} M. Gekhtman, M.  Shapiro, A. Vainshtein.
{\it Generalized B\"acklund-Darboux transformations for Coxeter-Toda flows from a cluster algebra perspective}.
Acta Math. {\bf 206} (2011), 245--310.

\bibitem{GSTV} M. Gekhtman, M.  Shapiro, S. Tabachnikov, A. Vainshtein.
{\it Higher pentagram maps, weighted directed networks, and cluster dynamics. }
Electron. Res. Announc. Math. Sci. {\bf 19} (2012), 1--17. 

\bibitem{Gl} M. Glick.
{\it The pentagram map and $Y$-patterns}.
Adv. Math. {\bf 227} (2011), 1019--1045.

\bibitem{Gl2} M. Glick.
{\it On singularity confinement for the pentagram map.}
J. Algebraic Combin. {\bf 38} (2013),  597--635.

\bibitem{Gl3} M. Glick.
{\it The Devron Property}. J. Geom. and Phys., to appear.

\bibitem{GoK} A.~Goncharov, R. Kenyon.
{\it Dimers and cluster integrable systems}. Ann. Sci. Ec. Norm. Super. {\bf 46} (2013), 747--813.

\bibitem{Ka} T. Kato. 
{\it A note on the pentagram map and tropical geometry}. Preprint arXiv:1405.0084.

\bibitem{KV} R. Kedem, P.~Vichitkunakorn.
{\it $T$-systems and the pentagram map}. 
J. Geom. and Phys., to appear.

\bibitem{KhSo1} B.~Khesin, F.~Soloviev.
{\it The pentagram map in higher dimensions and KdV flows.}
Electron. Res. Announc. Math. Sci. {\bf 19} (2012), 86--96. 

\bibitem{KhSo2} B.~Khesin, F.~Soloviev.
{\it Integrability of a space pentagram map}. 
Math. Ann. {\bf 357} (2013),  1005--1047.

\bibitem{KhSo3} B.~Khesin, F.~Soloviev.
{\it The geometry of dented pentagram maps}.
J. Europ. Math. Soc., to appear. 

\bibitem{KhSo4} B.~Khesin, F.~Soloviev.
{\it Non-integrability vs. integrability in pentagram maps}.
J. Geom. and Phys., to appear.


\bibitem{MB} G. Mari Beffa.
{\it On generalizations of the pentagram map: discretizations of AGD flows}. J. Nonlinear Sci. {\bf 23} (2013), 303--334.

\bibitem{MB2} G. Mari Beffa.
{\it On integrable generalizations of the pentagram map}. IMRN, to appear.


\bibitem{MV} J. Moser, A. P. Veselov. 
{\it Discrete versions of some classical integrable systems and factorization of matrix polynomials}.
Comm. Math. Phys., {\bf 139} (1991),  217--243.  


\bibitem{OPRS} M.  Olshanetsky, A.  Perelomov, A.  Reyman,  M.  Semenov-Tian-Shansky.
{\it Integrable systems. II}, Dynamical systems. VII, 83--259, Encycl. Math. Sci. {\bf 16}. Springer, Berlin, 1994.

\bibitem{OST1} V. Ovsienko, R. Schwartz, S. Tabachnikov.
{\it Quasiperiodic motion for the Pentagram map}.
Electron. Res. Announc. Math. Sci., {\bf 16} (2009), 1--8.

\bibitem{OST2} V. Ovsienko, R. Schwartz, S. Tabachnikov.
{\it The Pentagram map: a discrete integrable system}.
Commun. Math. Phys.  {\bf 299} (2010), 409--446.

 \bibitem{OST3} V. Ovsienko, R. Schwartz, S. Tabachnikov.
 {\it Liouville-Arnold integrability of the pentagram map on closed polygons.} Duke Math. J., {\bf 162} (2013), 2149--2196. 

 \bibitem{Po} A. Postnikov.
 {\it Total positivity, Grassmannians, and networks}. Preprint math.CO/0609764.

\bibitem{Sc} O.  Schramm.
{\it Circle patterns with the combinatorics of the square grid}.
Duke Math. J. {\bf 86} (1997),  347--389.

\bibitem{Sch1} R. Schwartz.
{\it The pentagram map}. Experiment. Math. {\bf 1} (1992),  71--81.

\bibitem{Sch2} R. Schwartz.
{\it The pentagram map is recurrent}. Experiment. Math. {\bf 10} (2001), 519--528.

\bibitem{Sch3} R. Schwartz.
{\it Discrete monodromy, pentagrams, and the method of condensation.}
J.  Fixed Point Theory Appl. {\bf 3} (2008), 379-409.

\bibitem{Sch4} R. Schwartz.
{\it The pentagram integrals for Poncelet families}.
J. Geom. and Phys., to appear.

\bibitem{ST1} R. Schwartz, S. Tabachnikov.
{\it Elementary surprises in projective geometry}.
Math. Intelligencer {\bf 32} (2010) No 3, 31--34.

\bibitem{ST2} R. Schwartz, S. Tabachnikov.
{\it The Pentagram integrals on inscribed polygons}.
Electron. J.  Comb. {\bf 18} (2011), P171.

\bibitem{So} F. Soloviev.
{\it Integrability of the Pentagram Map}.
Duke Math. J., {\bf 162} (2013),  2815--2853.

\bibitem{Su} Yu. Suris.
{\it On some integrable systems related to the Toda lattice}.
 J. Phys. A {\bf 30} (1997), no. 6, 2235--2249.
 
\bibitem{Ta} K. Talaska.
{\it Determinants of weighted path matrices.}
Preprint arXiv:1202.3128.
 
 \bibitem{Ve} A. Veselov.
 {\it Integrable mappings.}  Russian Math. Surveys {\bf 46} (1991), 1--51.

\end{thebibliography}
\end{document}